\newcommand{\cc}{\mathbb{C}}
\newcommand{\nn}{\mathbb{N}}
\newcommand{\rr}{\mathbb{R}}
\newcommand{\CC}{\mathcal{C}}
\newcommand{\GG}{\mathcal{G}}
\newcommand{\MM}{\mathcal{M}}
\newcommand{\blocks}[1]{| #1 |}
\newcommand{\partlat}{P}
\newcommand{\freec}[1]{r_{#1}}
\newcommand{\R}{\mathbb{R}}
\newcommand{\C}{\mathbb{C}}
\newcommand{\abs}[1]{\left\vert {#1} \right\vert}
\newcommand{\gdos}{H}
\newcommand{\rinf}{R^{inf}}
\newcommand{\mfp}{\mathfrak{p}}
\newcommand{\mfq}{\mathfrak{q}}
\newcommand{\pols}{\mathcal P}
\newcommand{\meas}[1]{\mu \left\llbracket #1 \right\rrbracket}
\newcommand{\coef}[2]{\mathsf{e}_{#1}\left( #2\right) }
\newcommand{\ffc}[2]{\kappa_{#1}^{(#2)}} 
\newtheorem{theorem}{Theorem}[section]
\newtheorem{lemma}[theorem]{Lemma}
\newtheorem{definition}[theorem]{Definition}
\newtheorem{corollary}[theorem]{Corollary}
\newtheorem{proposition}[theorem]{Proposition}
\newtheorem{notation}[theorem]{Notation}
\newtheorem{remark}[theorem]{Remark}
\newtheorem{example}[theorem]{Example}
\title{Finite-free Convolution: Infinitesimal Distributions}
\author{\normalsize Octavio Arizmendi  \\ \normalsize octavius@cimat.mx  \\ \normalsize CIMAT, Guanajuato  \and \normalsize Daniel Perales  \\\normalsize dperale2@nd.edu\\ \normalsize University of Notre Dame \and \normalsize Josue Vazquez-Becerra \\\normalsize jdvb@cimat.mx \\ \normalsize CIMAT, Guanajuato} 
\date{}
\begin{document}

\maketitle

\begin{abstract}

Finite-free additive and multiplicative convolutions are operations on the set of polynomials with real roots, introduced independently by Szeg\"{o} and Walsh in the 1920s. 
These operations have regained some interest, in the last decade, after being rediscovered  by Marcus, Spielman, and Srivastava as the expected characteristic polynomial of randomly rotated matrices. 
They converge, as the degree $d$ of the polynomials increases, to the additive and multiplicative convolution of measures from free probability of Voiculescu. 
In this paper, we investigate the fluctuations of order $1/d$ ---also known as infinitesimal distributions--- related to these two operations and their limiting behavior, providing a detailed description of their convergence. 
Our approach relies on understanding the infinitesimal moment-cumulant formulas and the corresponding functional relations.
We also establish several applications and examples, including instances related to the infinitesimal free convolution of Belinschi and Shlyakhtenko, as well as the computation of infinitesimal distributions after differentiation of polynomials.

\end{abstract}

\section{Introduction}
%
Let $\mathbb{R}_d[x]$ denote the set of all polynomials in $x$ of degree $d$ with real coefficients.
Given any polynomials $p_d,q_d\in \mathbb{R}_d[x] $, we write them as 
\begin{equation*}
p_d(x)=\sum_{k=0}^d x^{d-k}(-1)^k \binom{d}{k} \coef{k}{p_d} 
\qquad \text{and} \qquad  
q_d(x)=\sum_{k=0}^d x^{d-k}(-1)^k  \binom{d}{k} \coef{k}{q_d} 
\end{equation*}
%
for some unique coefficients 
$\coef{k}{p_d}, \coef{k}{q_d} \in \rr$. 
The \emph{finite-free additive convolution} of $p_d$ and $q_d$ is defined as 
the polynomial $p_d\boxplus_d q_d \in \mathbb{R}_d[x]$ given by 
\begin{equation*}
[p_d \boxplus_d q_d](x)
	=
		\sum_{k=0}^d x^{d-k}(-1)^k 
				\binom{d}{k} 
				\left[
				\sum_{i=0}^{k}\binom{k}{i} \coef{i}{p_d} \coef{k-i}{q_d}\right] .
\end{equation*}
Additionally, the \emph{finite-free multiplicative convolution} of $p_d$ and $q_d$ is
defined as the polynomial $p_d\boxtimes_d q_d \in \mathbb{R}_d[x]$ given by 
\begin{equation*}
[p_d \boxtimes_d q_d](x) =
			\sum_{k=0}^d x^{d-k}(-1)^k \binom{d}{k} \Big[ \coef{k}{p_d} \coef{k}{q_d} \Big].
\end{equation*}
A century ago, these two operations were studied independently by Szeg\"{o} \cite{szego1922bemerkungen} and Walsh \cite{walsh1922location}, and they have recently been rediscovered by Marcus, Spielman, and Srivastava \cite{marcus2016polynomial} as the expected characteristic polynomials of the sum and product of randomly rotated matrices. 
%
%
In this paper, we examine the $1/d$ behavior of these two polynomial convolutions. 

Let $\pols_d(\rr)$ denote the set of all monic and real-rooted polynomials in $\mathbb{R}_d[x]$. 
For any polynomial $p_d \in \pols_d(\rr)$, we write its $d$ roots counting multiplicities as 
\begin{equation*}
\lambda_d(p_d)  \leq 
\lambda_{d-1}(p_d) \leq \cdots \leq
\lambda_{1}(p_d) . 
\end{equation*}
And we identify each $p_d \in \pols_d(\rr)$ with the discrete probability distribution $\mu_{p_d}$ on $\rr$ that puts a mass of $1/d$ at each $\lambda_k(p_d)$, called the \emph{root distribution} of $p_d$, that is, 
\begin{equation*}
\mu_{p_d}=\frac{1}{d}\sum^d_{k=1} \delta_{\lambda_k(p_d)} . 
\end{equation*}
The set $\pols_d(\rr)$ is closed under finite-free additive convolution, i.e., $p_d,q_d\in \pols_d(\rr)$ implies $p_d\boxplus_d q_d\in \pols_d(\rr)$, so the finite-free additive convolution of real-rooted polynomials is also real-rooted, see \cite{walsh1922location}.
Real-rootedness is also preserved under multiplicative convolution if at least one of the polynomials has no negative roots. 
Concretely, letting $\pols_d(\rr_{\geq 0})$ denote the set of all polynomials in $\pols_d(\rr)$ with no negative roots, we have that $p_d\in \pols_d(\rr)$ and $q_d\in \pols_d(\rr_{\geq 0})$ implies $ p_d\boxtimes_d q_d\in \pols_d(\rr)$, see \cite{szego1922bemerkungen}
%
%
%
%

Let $\MM(\rr)$ denote the set of all probability distributions on $\rr$. 
Given a sequence of polynomials $\mfp = (p_d)_{d\geq1}$ with $p_d \in \pols_d(\rr)$ for each $d \geq 1$, 
we say that it has \emph{limiting (root) distribution} $\mu \in \MM(\rr)$ if the sequence $(\mu_{p_d})_{d\geq1}$ converges in moments to  $\mu$, namely, for every integer $n \geq 0$, we have that 
\begin{equation}
 \int_{\rr} t^n  \,  d \mu_{p_d}(t) \, = \frac{1}{d} \sum_{k=1}^{d} \lambda^n_k(p_d) 
\ \longrightarrow
\int_{\rr} t^n  \,  d \mu(t) = : m_n(\mu) 
\quad \text{as}  \quad 
d \to \infty. 
\end{equation}
Moreover, the \emph{$n$-th infinitesimal moment} of $\mfp = (p_d)_{d\geq1}$ is defined as 
\begin{equation}\label{eqn:infinitesimal_moment_intro}
m_n'(\mfp) = \lim_{d\to \infty} d \, \left( m_n(\mu_{p_d}) - m_n(\mu)  \right),
\end{equation}
if such limit exists.  
Note that in this case we have a truncated expansion in $1/d$ of the form 
\begin{equation}\label{eqn:infinitesimal_description}
m_n(\mu_{p_d}) = m_n(\mu) + \frac{1}{d} \, m_n'(\mfp) + o (d^{-1})
\end{equation}
where $o (d^{-k})$ denotes a corresponding correction term satisfying $\lim_{d \to \infty} d^k \cdot  o (d^{-k}) = 0$. 
When $\mfp = (p_d)_{d\geq1}$ has infinitesimal moments of all orders, we identify the sequence $(m_n'(\mfp))_{n\geq1}$ with the unique linear functional $\mu':\rr[x] \to \rr$ such that $\mu'(x^n) = m_n'(\mfp)$ for every $n \geq 0$. 
Note that we always have $\mu'(1)=0$. 
We refer to the pair $(\mu,\mu')$ as the \emph{infinitesimal distribution} of $\mfp = (p_d)_{d\geq1}$. 

The notion of infinitesimal distribution $(\mu,\mu')$ ---for a non-commutative variable--- appeared first in the work of Belinschi and Shlyakhtenko \cite{belinschi2012free}, as a generalization of the Type B distributions of Biane, Goodman, and Nica \cite{biane2003non} and only requiring $\mu$ and $\mu'$ to be linear functionals on $\C[x]$ with $\mu(1)=1$ and $\mu'(1)=0$. 
The idea behind it is to capture at the same time the zeroth and next-order behavior in the convergence of a sequence, examining not only the limiting object but also the way the sequence fluctuates around it.
Thus, related intrinsically to our work, we find the infinitesimal distributions $(\mu,\mu')$ coming from the eigenvalue distribution of large random matrices, see  \cite{johansson1998fluctuations,mingo2004annular,dumitriu2006global,mingo2019non}, where $\mu$ is a probability distribution on $\R$ and $\mu'$ is a signed measure on $\R$ with total mass $\mu'(\R)=0$.
In the present work, we take $\mu$ as a probability distribution on $\R$ and $\mu':\R[x] \to \R$  as a linear functional with $\mu'(1)=0$, and we will identify $\mu'$ with a signed measure when appropriate. 

\textbf{Main question.} 
If $\mfp=(p_d)_{d\geq1}$ and $\mfq=(q_d)_{d\geq1}$ are two sequences of polynomials with infinitesimal distributions $(\mu,\mu')$ and $(\nu,\nu')$, respectively, 
what is the infinitesimal distribution of the finite-free additive convolution $\mfp \boxplus \mfq := (p_d\boxplus_d q_d)_{d\geq1}$ and  the finite-free multiplicative convolution $\mfp \boxtimes \mfq := (p_d \boxtimes q_d)_{d\geq1}$?\\

\textbf{Moment-cumulant relations.} 
It is already known from \cite{marcus2021polynomial,arizmendi2018cumulants,arizmendi2023finite} that if $\mfp=(p_d)_{d\geq1}$ and $\mfq=(q_d)_{d\geq1}$ have limiting distributions $\mu$ and $\nu$, respectively, then the finite-free convolutions $\mfp \boxplus \mfq = (p_d\boxplus_d q_d)_{d\geq1}$ and  $\mfp \boxtimes \mfq = (p_d \boxtimes q_d)_{d\geq1}$ converge to the free additive and multiplicative convolution $\mu \boxplus \nu$ and $\mu \boxtimes \nu$ from \cite{voiculescu1991limit}. 
One of our main contributions is to provide explicit formulas for the infinitesimal moments
\begin{equation*}
m_n'(\mfp \boxplus \mfq)
= \lim_{d\to \infty} d \left( m_n(\mu_{p_d \boxplus_d q_d }) - m_n(\mu \boxplus  \nu)  \right)
%
\quad \text{and} \quad 
%
m_n'(\mfp \boxtimes \mfq) = \lim_{d\to \infty} d \left( m_n(\mu_{p_d \boxtimes_d q_d }) - m_n(\mu \boxtimes  \nu)  \right),
\end{equation*}
see Theorem \ref{thm:infinitesimal_additive_finite_free cumulants} and Theorem \ref{prop:moments_for_multiplicative}.   
These infinitesimal moments complete the picture for the infinitesimal distribution of $\mfp \boxplus \mfq$ and $\mfp \boxtimes \mfq$, however, a lot more can be derived from translating our combinatorial formulas into functional relations of formal power series, and vice versa, as we describe in more detail below. 

Let $\mu,\nu \in \MM(\R)$ be probability distributions with finite moments 
 of all orders.
Recall that the \emph{free cumulants} of $\mu$ is the sequence $\freec{1}(\mu), \freec{2}(\mu),\ldots $ recursively defined  through the \emph{moment-cumulant formula} 
\begin{equation}\label{eqn:moment-cumulant-combinatorial_intro}
m_n(\mu) \ \ = \sum_{\pi \in NC(n)} r_{\pi}(\mu) 
\quad \text{with} \quad 
r_\pi (\mu) : = \prod_{V \in \pi} r_{\abs{V}} (\mu) 
\end{equation}
%
with $NC(n)$ the set of all non-crossing partitions of $[n]:= \{1,2,\ldots,n\}$ for every $n \geq 1$, see Section \ref{sec:preliminaries} for more details on partitions.
%
A fundamental feature of free cumulants is that they linearize free additive convolution, that is, 
\begin{equation*}
r_n(\mu \boxplus \nu) = r_n(\mu) +  r_n(\nu)
\end{equation*}
for all $n \geq 1$. 
Moreover, the moment-cumulant formula \eqref{eqn:moment-cumulant-combinatorial_intro} has a simple reformulation in terms of formal power series. 
Indeed, taking the \emph{Cauchy transform} and the \emph{$R$-transform} of $\mu$ as the formal power series 
\begin{equation}\label{eqn:cauchy_and_r_transform} 
G_{\mu}(z)  = 
		\sum_{n=0}^{\infty}   m_n(\mu) \, z^{-n-1} 
\qquad \text{and} \qquad
R_{\mu}(z) =
	\sum_{n=1}^{\infty} r_n(\mu) \, z^{n-1} \ ,
\end{equation}
respectively, 
the moment-cumulant formula \eqref{eqn:moment-cumulant-combinatorial_intro} holds for every $n \geq 1$  if and only if $G_{\mu}$ and $R_{\mu}$ satisfy the functional relation 
\begin{equation}\label{eqn:moment-cumulant-functional_G_R_intro}
\textstyle G_{\mu}\left( R_{\mu}(z)+\frac{1}{z} \right) = z \, 
\end{equation}
with all operations in \eqref{eqn:moment-cumulant-functional_G_R_intro} taken at the level of formal power series. 
Analogously, there exists a notion of cumulants for finite-free convolution, which was introduced in \cite{arizmendi2018cumulants}.  
Concretely, for a polynomial $p_d\in \pols_d(\rr)$, its \emph{finite-free cumulants} is the sequence $\kappa_1(p_d), $ $\kappa_2(p_d),  \ldots, \kappa_d(p_d)$ where each $\kappa_n(p_d)$ is given by  
\begin{equation}\label{eqn:finite-cumulants_coeff_intro}
\kappa_n(p_d) = 
			\frac{(-d)^{n-1}}{(n-1)!} 
			\sum_{\pi \in \partlat(n)} 
			\ (-1)^{\blocks{\pi}-1}
(\blocks{\pi}-1)! \prod_{V\in \pi}  \coef{|V|}{p_d}
\end{equation} 
with $\partlat(n)$ the set of all partitions of $[n]$. 
%
As desired, finite-free cumulants linearize finite-free additive convolution, so given $p_d,q_d\in \pols_d(\rr)$, we have  
\begin{equation*}
\kappa_n(p_d \boxplus_d q_d )=\kappa_n(p_d) + \kappa_n(q_d)
\end{equation*}
for all $n \geq 1$. 
While \eqref{eqn:finite-cumulants_coeff_intro} is a cumulant-coefficient formula, moment-coefficient and moment-cumulant formulas were also provided in \cite{arizmendi2018cumulants}.  
In particular, for some of our results, we depart from \cite[Theorem 1.3]{arizmendi2023finite}, which states that 
\begin{equation}\label{eqn:moment_cumulant_ver_intro}
m_n(\mu_{p_d})  
\ \ \, =  
		\sum_{\pi\in NC(n)} \kappa_\pi(p_d)  
	\ -  \ 
		\frac{n}{2 d} 
		\sum_{ \substack{t,s=n\\  \sigma \in S_{NC}(t,s)}} 
		\frac{\kappa_\sigma(p_d)}{ts} 
	\ \ + \ \ 
		o(d^{-1})
\end{equation}
with $S_{NC}(t,s)$ the set of all non-crossing annular permutations in a $(t,s)$-annulus. \\

\textbf{Infinitesimal dirstributions.} As a consequence of \eqref{eqn:moment_cumulant_ver_intro}, we establish in Lemma \ref{lem:infinitesimal_single_polynomial} that a sequence $\mfp = (p_d)_{d\geq1}$ with limiting distribution $\mu \in \MM(\R)$ possesses infinitesimal moments of all orders $(m'_n(\mfp))_{n\geq 1}$ 
if and only if there exists a sequence $(\widehat{r}_n(\mfp))_{n\geq 1}$ that does not depend on $d$ such that 
\begin{equation}\label{eqn:hypothesis_cumulants_ver_intro}
\kappa_n (p_d)
= r_n (\mu)+ \frac{1}{d} \, \widehat{r}_n (\mfp) + o(d^{-1}) 
\end{equation}
for all $n \geq 1$. 
In other words, the moments $m_n(\mu_{p_d})$ have a truncated expansion in $1/d$ if and only if the finite-free cumulants $\kappa_n (p_d)$ do as well, compare \eqref{eqn:infinitesimal_description} and \eqref{eqn:hypothesis_cumulants_ver_intro}. 
Moreover, the sequences $(m'_n(\mfp))_{n\geq 1}$  and $(\widehat{r}_n(\mfp) )_{n\geq 1}$ determine each other through the relation 
\begin{equation}\label{eqn:mom.cum.inf_intro}
m'_n(\mfp) \ \ \ =\sum_{\pi \in NC(n)} 
            \sum_{V \in \pi } 
                    \widehat{r}_{\abs{V}} (\mfp) \cdot 
                    r_{\pi \setminus V}( \mu )
\ \ - \ \ 
	\frac{n}{2}  \sum_{\substack{ t+s=n\\ \sigma \in S_{NC}(t,s)  } }
					\frac{  r_{\sigma} (\mu) } {ts} 
\end{equation}
for all $n \geq 1$. 
In particular, if a sequence  $\mfp=(p_d)_{d\geq1}$ does have infinitesimal distribution $(\mu,\mu')$, there always exists a sequence $(\widehat{r}_n(\mfp) )_{n\geq 1}$ that satisfies \eqref{eqn:hypothesis_cumulants_ver_intro}. 
In this case, we consider the formal power series 
\begin{equation}\label{eqn:G_mu_prime_R_hat_intro}
G_{\mu'}(z)  \, = 
		\sum_{n=1}^{\infty}   m'_n(\mfp) \, z^{-n-1} 
\qquad \text{and} \qquad 
\widehat{R}_{\mfp}(z) \, =  \sum_{n=1}^{\infty} \widehat{r}_{n}(\mfp) \, z^{n-1} \, .
\end{equation}
Note that if $\mu'$ can be identified with a signed measure on $\R$, then the formal power series $G_{\mu'}(z)$ is simply its Cauchy transform. 
The elements of sequence $(\widehat{r}_n(\mfp))_{n\geq 1}$ are referred to as the \textit{(finite-free) cumulant fluctuations of} $\mfp=(p_d)_{d\geq1}$. 
It is expected that \eqref{eqn:mom.cum.inf_intro} relates real infinitesimal moments from  \cite{cavsbron2025infinitesimal}, arising from orthogonally invariant matrices. 

Turning the combinatorial formula \eqref{eqn:mom.cum.inf_intro} into a functional relation requires the use of a novel formal power series, that we call the \emph{$H$-transform} of $\mu$, defined as 
\begin{equation}\label{eqn:H-transform_intro}
\gdos_{\mu}(z) = \sum_{n=2}^{\infty} h_n (\mu) \, z^{-n-1}  
\quad \text{with} \quad 
h_n (\mu) \ = \, \frac{n}{2} \sum_{\substack{t+s=n \\ \sigma \in S_{NC}(t,s)}} \frac{r_\sigma(\mu)} { ts } \ .
\end{equation}
In Proposition \ref{prop:HG-functional}, we establish that $\gdos_{\mu}(z)$ can be written in terms of $G_{\mu}(z)$ as 
\begin{equation*}
H_\mu (z) = \frac{G'_\mu(z)}{G_\mu(z)}-\frac{G''_\mu (z) }{2G'_\mu (z)} \ .
\end{equation*}
It is worth mentioning that $\gdos_{\mu}(z)$ coincides the Cauchy transform of the possibly signed measure $\tfrac{1}{2} \left(\mathcal{M}(\mu)- \mathcal{M}(\mathcal{M}(\mu))\right)$, where $\mathcal{M}$ is the inverse Markov-Krein transform from \cite{kerov1998}, whenever $\mathcal{M}(\mathcal{M}(\mu))$ is well-defined. 
Then, we prove in Lemma \ref{lemma:additive.delta.zero}  that \eqref{eqn:mom.cum.inf_intro} is equivalent to the following functional relation 
\begin{equation} \label{eq. qis0_intro}
G_{\mu'} = - (\widehat{R}_{\mfp} \circ G_{\mu} ) \cdot G'_{\mu} - \gdos_{\mu} 
\end{equation}
%
%
This recovers  \cite[Theorem 1.6]{arizmendi2023finite} as a particular case when $\widehat{R}_{\mfp}(z)=0$. 

Assume now that both $\mfp=(p_d)_{d\geq1}$ and $\mfq=(q_d)_{d\geq1}$ have infinitesimal distributions, say $(\mu,\mu')$ and $(\nu,\nu')$, respectively. 
The content of Theorem  \ref{thm:infinitesimal_additive_finite_free cumulants}'s proof is that, under these assumptions, the cumulant fluctuations $(\widehat{r}_n( \mfp \boxplus \mfq ) )_{n\geq 1}$ exist, and, in fact, they satisfy 
\begin{equation}\label{eqn:linearity_cumulutant_fluc}
\widehat{r}_n( \mfp \boxplus \mfq ) = \widehat{r}_n( \mfp ) + \widehat{r}_n( \mfq )
\text{ \ for all } n \geq 1,  
\text{ or equivalently \ }  \widehat{R}_{\mfp \boxplus \mfq}(z)= \widehat{R}_{\mfp}(z) + \widehat{R}_{\mfq}(z) \, .
\end{equation}
%
%
%
It is then guaranteed that $\mfp \boxplus \mfq = (p_d\boxplus_d q_d)_{d\geq1}$ has infinitesimal distribution $(\rho,\rho')$ with $\rho=\mu \boxplus \nu$ and  $\rho'$ determined through 
the relation  
\begin{equation}\label{eqn:Gprime_addition_intro}
G_{\rho'} =  
 		-  
				(\widehat{R}_{\mfp} \circ G_{\mu \boxplus \nu} ) 
				\cdot
				G'_{\mu \boxplus \nu}
    	-  
				(\widehat{R}_{\mfq} \circ G_{\mu \boxplus \nu} ) 
				\cdot 
				G'_{\mu \boxplus \nu}
		-
			\gdos_{\mu \boxplus \nu} \, .
\end{equation}
Theorem \ref{thm:infinitesimal_multiplicative_finite_free} establishes that the  cumulant fluctuations $(\widehat{r}_n( \mfp \boxtimes \mfq ) )_{n\geq 1}$ also exist, 
unfortunately, their computation \eqref{eqn:rhats_multiplicative} is quite intricate. 
Nonetheless, their sole existence guarantees that $\mfp \boxtimes \mfq = (p_d\boxtimes_d q_d)_{d\geq1}$ has infinitesimal distribution $(\tau,\tau')$ with $\tau=\mu \boxtimes \nu$ and  $G_{\tau'}(z) =  \sum_{n=1}^{\infty} m'_{n}(\mfp  \boxtimes \mfq) \, z^{-n-1}$ satisfying
\begin{equation}\label{eqn:Gprime_multiplication_intro}
G_{\tau'} =  
		-  
			(\widehat{R}_{\mfp \boxtimes \mfq} \circ G_{\mu \boxtimes \nu} )
			\cdot 
			G'_{\mu \boxtimes \nu}
		-
			\gdos_{\mu \boxtimes \nu} 
\end{equation}
Theorem \ref{thm:infinitesimal_additive_finite_free} and Theorem \ref{prop:cauchy_multiplicative_finite_free} pertain to \eqref{eqn:Gprime_addition_intro} and \eqref{eqn:Gprime_multiplication_intro}, respectively. \\

\textbf{Infinitesimal transforms.} A natural question then relates to the connection between the infinitesimal distribution \eqref{eqn:Gprime_addition_intro} and the notion of infinitesimal freeness from \cite{fevrier2010infinitesimal,belinschi2012free}. 
For this, let us recall that the \emph{infinitesimal free cumulants} of $(\mu,\mu')$, denoted by $(r'_n(\mu,\mu'))_{n\geq1}$, are recursively defined via the \emph{infinitesimal moment-cumulant formula}
\begin{equation*}\label{eqn:infinitesimal-moment-cumulant-formula_intro}
r'_n (\mu,\mu')
		\ \ = 
				\sum_{\pi \in NC(n)} 
				\sum_{V \in \pi } 
					m_{\abs{V}}(\mu') \, 
					m_{\pi \setminus V}(\mu) \, 
					\text{Möb}(\pi,1_n)
\end{equation*}
for every $n \geq 1$. 
This formula can be inverted and, given  $(r_n(\mu))_{n\geq1}$ and $(m_n(\mu))_{n\geq1}$, the sequences $(r'_n (\mu,\mu'))_{n\geq1}$ and $(m_n (\mu'))_{n\geq1}$ determine each other, see Proposition \ref{prop:infinitesimal-moment-cumulant-functional}.
Moreover, the \emph{infinitesimal Cauchy transform} and the \emph{infintesimal $R$-transform} of $(\mu,\mu')$ as formal power series are respectively given by
\begin{equation*}
G_{\mu'}(z) \, = \sum_{n=0}^\infty m_n(\mu') \, z^{-n-1} 
\qquad \text{and} \qquad 
\rinf_{\mu,\mu'}(z) \ =\sum_{n=1}^\infty r'_n (\mu,\mu') \, z^{n-1}. 
\end{equation*}
Furthermore, these formal power series satisfy the relations  
\begin{equation*} 
G_{\mu'} \  = -(\rinf_{\mu,\mu'} \circ G_\mu) \, G'_\mu 
\qquad \text{and} \qquad 
\rinf_{\mu,\mu'} = -(G_{\mu'} \circ K_\mu) \, K_\mu'
\quad \text{where} \quad K_\mu = R_{\mu} + \tfrac{1}{z}.
\end{equation*}
Then, a pair $(\gamma,\gamma')$ is the \emph{infinitesimal free additive convolution} of $(\mu,\mu')$ and $(\nu,\nu')$, denoted $(\mu,\mu')\boxplus_B(\nu,\nu')$, if $\gamma=\mu \boxplus \nu$ and $\gamma'$ has Cauchy transform given by 
\begin{equation}\label{eqn:Cauchy_Rtransforms_infinitesimal_additive}
G_{\gamma'} = - ( \rinf_{\mu,\mu'} \circ G_{\mu \boxplus \nu}  ) \cdot G'_{\mu \boxplus \nu}
- ( \rinf_{\nu,\nu'} \circ G_{\mu \boxplus \nu}  ) \cdot G'_{\mu \boxplus \nu} \, .
\end{equation}
%

\textbf{Relation to infinitesimal freeness.} The discrepancy between \eqref{eqn:Cauchy_Rtransforms_infinitesimal_additive}  and the first two terms in the right-hand side of \eqref{eqn:Gprime_addition_intro} boils down to the deviation of $\widehat{R}_{\mfp}$ and $\widehat{R}_{\mfq}$ from $\rinf_{\mu,\mu'}$ and  $\rinf_{\nu,\nu'}$, respectively. 
In Lemma  \ref{lem:infinitesimal_single_polynomial_3}, we prove that $\widehat{R}_{\mfp}$ and $\rinf_{\mu,\mu'}$ relate to each other through the equation 
\begin{equation}\label{eqn:relation_rinf_rhat}
\widehat{R}^{}_\mfp(z)
=
        \rinf_{\mu,\mu'} (z)-(H_\mu \circ K_\mu) \cdot  K'_\mu.
\end{equation}
%
%
Moreover, since $K_\mu$ is never a constant function, the formal power series $\widehat{R}_{\mfp}$ and $\rinf_{\mu,\mu'}$ are the same if and only if $H_{\mu} = 0$. 
An analogous equation holds for $\widehat{R}_{\mfq}$ and $\rinf_{\nu,\nu'}$. 

A crude relation between $G_{\rho'}$ and $G_{\gamma'}$ can then be derived by plugging \eqref{eqn:relation_rinf_rhat} into \eqref{eqn:Gprime_addition_intro}, and then comparing the result to \eqref{eqn:Cauchy_Rtransforms_infinitesimal_additive}. 
However, as we show in Theorem \ref{prop:fluctuations_infinitesimal_subordination}, a much nicer relation holds when we consider subordination functions, namely, 
\begin{equation}\label{eq:Gprime_subordination_intro}
G_{\rho'} = 
		G_{\gamma'}
	+	
		\gdos_{\mu \boxplus \nu}  
	+
		\frac{\omega''_1}{2\omega'_1}+\frac{\omega''_2}{2\omega'_2} 
\end{equation}
with $\omega_1$ and $\omega_2$ the subordination functions from \cite{voiculescu1993analogues,biane1998processes}, which satisfy the relation 
$G_{\mu\boxplus \nu} = G_\mu\circ\omega_1 = G_\nu\circ \omega_2$, see Section \ref{sec:preliminaries}. 
Additionally, without resorting to the theory of subordination functions, we show in Corollary \ref{cor:additive.delta.zero} that $G_{\rho'}$ and $G_{\gamma'}$ coincide if either  $\mu=\delta_1$ or $\nu=\delta_1$.  A similar result holds for the multiplicative case, as we state in Corollary \ref{Cor. infmult}.
This allows us to recover and generalize the infinitesimal distributions obtained in \cite[Section 4.2]{shlyakhtenko2018free}, which arose from finite-rank deformations of certain random matrices.  

Finally, we apply our results to various concrete situations and examples, obtaining distinct infinitesimal distributions, among which we highlight three of general interest:
\begin{enumerate}[(i)]
    \item The change in infinitesimal distribution after repeated differentiation of polynomials, see Proposition \ref{prop:repeated.diff.general} and Examples \ref{ex.hermite}, \ref{ex.bernoulli}, and \ref{ex.derivatives}.
    \item The infinitesimal distribution from the convolution of polynomials with zero infinitesimal moments, or constant root distribution, see Example \ref{ex.additivefixedmoments}.
    \item  The infinitesimal distribution from the convolution with polynomials of the form $(x-\alpha)^{d-s}q_s$ for a fixed $q_s \in \pols_s(\rr)$ and $\alpha \in \{0,1\}$.
	These correspond to finite-rank perturbations of a multiple of the identity matrix, see Examples  \ref{exm:deviation_from_dirac},  \ref{exm:deviation_from_dirac2}, \ref{exm:two_perturbations_identity}, and \ref{exm:deviation_from_dirac3}. 
\end{enumerate}

\noindent\textbf{Outline of the paper.} 
Besides the introduction, the paper contains four more sections organized as follows. 
In Section \ref{sec:preliminaries}, we provide some preliminaries, this includes notation for partitions and permutations, as well as some notions and results from free convolutions, infinitesimal freeness, and finite-free probability. 
In Section \ref{sec:combinatorial.description}, we provide a combinatorial description for the infinitesimal distribution of finite-free convolutions. 
In Section \ref{Functional}, we translate our combinatorial formulas into functional relations and determine its relation with infinitesimal free convolution. 
Finally, in Section \ref{sec:applications_examples}, we provide several applications and examples. 


\section{Preliminaries}\label{sec:preliminaries}
%
%
%
In this section, we introduce the notation used in this paper regarding set partitions, permutations, and polynomials. 
We also survey some notions and results from free probability and finite-free probability that will be needed for upcoming sections.

\subsection{Partitions and permutations } 

\subsubsection{Set partitions} \label{ssec:partitions}
A \textit{(set) partition} of the set \([n] := \{1, \dots, n\}\) is a collection \(\pi = \{V_1, \dots, V_k\}\) of pairwise disjoint and non-empty subsets \(V_i \subset [n]\) whose union covers all of \([n]\), i.e., $V_i \neq \emptyset$, $V_i \cap V_j = \emptyset$ if $i \neq j$, and $V_1 \cup \dots \cup V_k = [n]$. 
The subsets \(V_i\) are called the \textit{blocks} of \(\pi\), and the number of blocks is denoted by \(\blocks{\pi}\). The set of all partitions of \([n]\) is denoted by \(P(n)\).

A partition \(\pi = \{V_1, \dots, V_r\} \in P(n)\) is called \textit{non-crossing} if for every \(1 \leq a < b < c < d \leq n\) such that \(a, c \in V_i\) and \(b, d \in V_j\), it necessarily follows that \(V_i = V_j\), meaning all four elements belong to the same block. 
The set of all non-crossing partitions of \([n]\) is denoted by \({NC}(n)\). 
We refer the reader to \cite{nica2006lectures} for a detailed exposition on non-crossing partitions in free probability. 

The set $\partlat(n)$ is endowed with the partial order given by reversed refinement, that is, given $\pi, \sigma \in \partlat (n)$, we write $\pi \leq \sigma$ if every block of $\pi$ is contained in a block of $\sigma$. 
Under this partial order, the set $\partlat(n)$ becomes a lattice with minimum $0_n:=\{\{1\},\{2\},\dots,\{n\}\}$, the partition with $n$ blocks, and maximum $1_n:=\{\{1,2\dots,n\}\}$, the partition with only one block. 
We use $\pi\vee \sigma$ to denote the supremum of the set $\{\sigma,\pi\}$ in $\partlat(n)$, called the \emph{join} of $\pi$ and $\sigma$. 

The \emph{Möbius function on the lattice $\partlat(n)$} is the function $\text{Möb}:\partlat^{(2)}(n) \to \R$ with $\partlat^{(2)}(n):= \{ (\pi,\theta) \in \partlat(n) \times \partlat(n) : \pi \leq \theta \}$ given by 
\begin{equation*}
\text{Möb}(0_n, \pi)= (-1)^{n-\blocks{\pi}} \prod_{V\in \pi} (|V|-1)! 
\end{equation*}
for any $\pi \in \partlat(n)$, and extended to a general $(\pi,\theta) \in \partlat^{(2)}(n)$ through the natural lattice isomorphism between the interval $[\pi,\theta]:=\{ \sigma \in \partlat(n) : \pi \leq \sigma \leq \theta \}$ and another interval of the form $[0_k,\rho]$ with $k = \abs{\pi}$ and $\rho \in P(k)$, yielding $\text{Möb}(\pi,\theta) = \text{Möb}(0_k, \rho)$, see \cite[Section 3.10]{stanley2011enumerative}. 

The set $NC(n)$ is also endowed with the partial order given by reversed refinement, making it a sublattice of $\partlat(n)$ with $0_n$ and $1_n$ as minimum and maximum, respectively, as well in this case. 
The \emph{Möbius function on the lattice $NC(n)$}, denoted by $\text{Möb}_{NC}( \cdot , \cdot)$, satisfies 
\begin{equation*}
\text{Möb}_{NC}(0_n, \pi) = (-1)^{n - |\pi|} \prod_{V \in \pi} C_{|V|-1} 
\end{equation*}
with $\pi \in NC(n)$ and  $C_k=\frac{1}{k+1} \binom{2k}{k}$ the $k$-th Catalan number; 
for a general $(\pi,\theta) \in NC^{(2)}(n)$, the value $\text{Möb}_{NC}(\pi,\theta)$ is computed through a canonical interval factorization, see \cite[Lecture 10]{nica2006lectures} for details. 
%
%
%
%
%

%
Let $S_n$ denote the symmetric group on $[n]$. 
For every partition $\pi\in P(n)$, we let $\pi$ also denote the unique permutation in $S_n$ whose cycles are all the blocks of $\pi$ written in increasing order. 
For instance, the partition $\{ \{3,1,5\},\{6,2\},\{4\}\} \in P(6)$ becomes the permutation $(1,3,5)(4)(2,6) \in S_6$ in cycle notation.  
Additionally, for every permutation $\sigma\in S_n$, we let $\sigma$ denote the partition in $P(n)$ whose blocks are the all cycles of $\sigma$ viewed as subsets of $[n]$. 
For example, the permutation $ (6,3,5) (1,4) (2) \in S_6$ is associated with the partition $\{ \{6,3,5\},\{1,4\},\{2\}\} \in P(6)$. 
This is certainly an abuse of notation but one that is rather convenient and unharmful for our purposes.  
%
%
%
%
%

For instance, letting $\gamma_n:=(1,2, \ldots, n)\in S_n$, the set of non-crossing partitions $NC(n)$ is alternatively given by  
\begin{equation}\label{eqn:algebraic_definition_nc}
NC(n) = \{ \pi \in P(n) : |\pi| + |\pi^{-1}\gamma_n|= n+1  \} \, .
\end{equation}
where, for a given $\pi \in P(n)$, we first take $\pi^{-1}$ as the inverse of $\pi$ as a permutation, then we compute the product $\pi^{-1}\gamma_n$ in $S_n$, and finally we consider the permutation $\pi^{-1}\gamma_n$ as a partition in $P(n)$. 
For every non-crossing partition $\pi \in NC(n)$, the permutation $\pi^{-1}\gamma_n$ viewed as a partition is called the \emph{Kreweras complement} of $\pi$ and denoted by $Kr(\pi)$. 
Note that $Kr(\pi)\in NC(n)$ for any $\pi \in NC(n)$ by the defining condition in \eqref{eqn:algebraic_definition_nc}. 
For a geometric definition of the Kreweras complement, see \cite[Definition 9.21]{nica2006lectures}. 
%
%
%
%
%
%

\subsubsection{Non-crossing annular permutations}

The set of \emph{annular non-crossing permutations} $S_{NC}(t,s)$ can be defined as follows. 
First, given integers $t,s \geq 1$, we let $\gamma_{t,s}$ denote the permutation 
$(1,2,\ldots,t)(t+1,t+2\ldots,t+s)  \in S_{t+s}$. 
%
%
Then, for a permutation $\sigma \in S_{t+s}$, we write $\sigma \vee \gamma_{t,s} = 1_{t+s}$ to mean that $\langle \sigma, \gamma_{t,s} \rangle$ ---the subgroup of $S_{t+s}$ generated by $\sigma$ and $\gamma_{t,s}$--- acts transitively on $[t+s]$, i.e., the group action of $\langle \sigma, \gamma_{t,s} \rangle$ on $[t+s]$ has a single orbit. 
Noticeably, the condition  $\sigma \vee \gamma_{t,s} = 1_{t+s}$ is equivalent to $\sigma$ mapping at least one element from $[t]$ to an element in $[t+1,s]$. 
Finally, we define $S_{NC}(t,s)$ as the set of permutations given by 
\begin{equation*}
S_{NC}(t,s) = \Big\{ \sigma \in S_{t+s} \ : \ \gamma_{t,s} \vee \sigma = 1_{t+s} \ \text{ and } \ \abs{\sigma} + \abs{\sigma^{-1} \, \gamma_{t,s}} = t+s \Big\}
\end{equation*}
where $| \cdot |$ gives the number of all distinct cycles in a permutation including fixed points. 
A permutation $\sigma \in S_{NC}(t, s)$ is called \emph{non-crossing in the $(t,s)$-annulus}, or simply \emph{annular non-crossing} if it is clear which pair of integers $(t,s)$ are being considered;  
additionally, the permutation $Kr_{t,s}(\sigma):=\sigma^{-1} \gamma_{t,s}$ is referred to as the \emph{Kreweras complement} of  $\sigma$ in the $(t,s)$-annulus. 

The term \emph{non-crossing} for permutations in $S_{NC}(t, s)$ is due to the fact that their defining conditions are equivalent to the following graphical representation, see Theorem 8 in \cite[Section 5.1]{mingo2017free}. 
For it, one starts with an annulus in the plane with the integers $1$ to $t$ arranged clockwise on the outer circle and the integers $t + 1$ to $t+s$ arranged counterclockwise on the inner circle, this is called a \emph{$(t,s)$-annulus}. 
Then, a permutation $\sigma \in S_{t+s}$ is {non-crossing} precisely if the cycles of $\sigma$ can be represented in such a way that: (i) each cycle of $\sigma$ is drawn as a simple closed path oriented clockwise lying between the inner and outer circles and passing through all the numbers in the given cycle, (ii) the inner and outer circles are connected by at least one cycle, (iii) no cycles cross each other. 

In the context of random matrices, annular non-crossing permutations were first investigated in \cite{mingo2004annular} as the combinatorial object governing the limiting behavior of the covariance of traces of complex Wishart matrices. 
This behavior led to the notion of \emph{second order freeness} \cite{mingo2006second}, and shown to be a common feature for unitarily invariant random matrices in \cite{mingo2007second}. 
This notion does not yet appear in our work, so we simply refer the reader to \cite[Chapter 5]{mingo2017free}, and references therein, for more details on this matter. 
Nevertheless, annular non-crossing permutations do play a role in the combinatorial description of the infinitesimal distribution of Gaussian Orthogonal Ensembles and real Wishart matrices, see  \cite{mingo2019non} and \cite{mingo2025asymptotic}, respectively, and so do they, as we will show later on, in the infinitesimal distribution of finite-free convolutions.

\subsection{Free probability}

\subsubsection{Combinatorial and functional relations}

In this part, we collect some functional relations of formal power series from free probability, as well as their equivalent formulations in terms of their coefficients. 
These relations are interesting on their own, and we will use them in different situations throughout this work. 
Because of that, we have opted to present them separately in a general setting. 

\begin{notation}
Let $(u_n)_{n\geq 1}$ be a sequence of numbers. 
For $\pi\in \partlat(n)$ and $\sigma \in S_n$, we let 
\begin{equation*}
u_{\pi} =\prod_{V\in \pi} u_{|V|} 
\qquad \text{and} \qquad
u_{\sigma}=\prod_{ c \in \sigma} u_{|c|} 
\end{equation*}
where $\abs{ \, \cdot  \, }$ denotes the set cardinality and cycle length, respectively. 
For instance, if $ \pi = \{ \{3,1,5\},\{6,2\},$ $\{4\}\} \in P(6)$ and $\sigma=(3,5) (1,4) (2) \in S_5$, then $u_{\pi} =u_3 \cdot u_2 \cdot u_1$ and $u_{\sigma}=u_2 \cdot u_2 \cdot u_1$.
\end{notation}

We first put together various well-known relations from free independence, commonly phrased in terms of the Cauchy transform and the $R$-transform of a probability distribution. 
Nonetheless, they hold in the general setting of formal power series. 
Proofs for these relations can be found in both \cite[Lecture 12]{nica2006lectures} and \cite[Section 2.4]{mingo2017free}.  

\begin{proposition}\label{prop:basic.funct.relation}
Let $(m_n)_{n \geq 1}$ and $(r_n)_{n \geq 1}$ be any two sequences of numbers and consider the formal power series  $G(z)  = z^{-1} + \sum_{n=1}^{\infty}  m_n \, z^{-n-1} $ and $K(z) = z^{-1} + \sum_{n=1}^{\infty} r_n \, z^{n-1} $. 
The following are equivalent: 
\begin{enumerate}[(1)]
\item The functional relation $ K ( G  (z) ) = z$ holds at the level of formal power series. 
\item The functional relation $ G ( K  (z) ) = z$ holds at the level of formal power series. 
\item The combinatorial formula $ m_n = \sum_{\pi \in NC(n)} r_{\pi}$ holds for every $n \geq 1$. 
\item The combinatorial formula $ r_n = \sum_{\pi \in NC(n)} m_{\pi} \, \text{Möb}_{NC}(\pi,1_n)$ holds for every $n \geq 1$. 
\end{enumerate}
Conditions (1) and (2) can also be formulated in terms of the formal power series $R(z) =  \sum_{n=1}^{\infty} r_n \, z^{n-1} $. 
\end{proposition}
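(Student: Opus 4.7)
My approach is to set up the cycle of implications $(3)\Leftrightarrow(4)$, $(1)\Leftrightarrow(2)$, and $(3)\Leftrightarrow(1)$, after which all four statements are equivalent. The final remark on $R(z)$ follows from the trivial identity $K(z) = z^{-1} + R(z)$, so the functional relations $K\circ G = z$ and $G\circ K = z$ are evidently expressible in terms of $R$ alone.

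For $(3)\Leftrightarrow(4)$, I would apply multiplicative Möbius inversion on the lattice $NC(n)$. Observe that (3) extends to $m_\pi = \sum_{\sigma \in NC(n),\ \sigma \leq \pi} r_\sigma$ for every $\pi \in NC(n)$, simply by applying the formula block-by-block inside $\pi$. Then classical Möbius inversion on $NC(n)$ together with the multiplicativity of both sides over blocks yields $r_n = \sum_{\pi \in NC(n)} m_\pi \cdot \text{Möb}_{NC}(\pi, 1_n)$, and the reverse direction runs the same argument in reverse.

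For $(1)\Leftrightarrow(2)$, note that $G(z) = z^{-1} + O(z^{-2})$ lies in $z^{-1} + z^{-2}\CC[[z^{-1}]]$ while $K(z) = z^{-1} + O(z)$ lies in $z^{-1} + \CC[[z]]$. Hence $K \circ G$ is a well-defined formal power series in $\CC[[z^{-1}]]$ and $G \circ K$ is well-defined in $\CC[[z]]$, and the standard theory of compositional inverses of formal Laurent series (a series starting at its linear term in an appropriate local variable has a unique compositional inverse) shows that one composition is $z$ precisely when the other is.

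The substantive equivalence is $(3)\Leftrightarrow(1)$. The plan is to decompose any $\pi \in NC(n)$ according to the block $V = \{1 = j_1 < j_2 < \dots < j_k\}$ containing the element $1$. The non-crossing condition forces the restriction of $\pi \setminus \{V\}$ to each interval $I_i := \{j_i+1, \dots, j_{i+1}-1\}$ (with $j_{k+1} := n+1$) to be an independent non-crossing partition of $I_i$. Setting $a_i := |I_i|$ and $m_0 := 1$, this produces the recursion
\begin{equation*}
m_n \ = \ \sum_{k=1}^{n} r_k \sum_{\substack{a_1,\dots,a_k \geq 0 \\ a_1 + \cdots + a_k = n-k}} m_{a_1}\cdots m_{a_k} \, .
\end{equation*}
Translating this to the generating function $\widetilde{M}(w) := 1 + \sum_{n\geq 1} m_n w^n$ yields $\widetilde{M}(w) = 1 + w\,\widetilde{M}(w)\, R(w\widetilde{M}(w))$, and then the substitution $w = 1/z$ together with $zG(z) = \widetilde{M}(1/z)$ and $w\widetilde{M}(w) = G(z)$ converts this to $zG(z) = 1 + G(z)\, R(G(z))$, which rearranges to $K(G(z)) = z$. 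The converse runs the same manipulations in reverse: (1) unwinds into the recursion, which determines $m_n$ uniquely, forcing (3).

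The main obstacle is the careful bookkeeping in the last step, especially keeping track of the change between the positive-power convention for $\widetilde{M}, R$ and the negative-power convention for $G, K$, and verifying that each substitution (in particular the composition $R \circ G$) is legitimate at the level of formal power series rather than only formally.
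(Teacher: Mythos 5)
Your proposal is correct and follows essentially the same route as the source the paper relies on: the paper does not prove this proposition itself but cites \cite[Lecture 12]{nica2006lectures} and \cite[Section 2.4]{mingo2017free}, and your argument --- Möbius inversion on $NC(n)$ for $(3)\Leftrightarrow(4)$, uniqueness of compositional inverses after the change of variable $w=1/z$ for $(1)\Leftrightarrow(2)$, and the first-block decomposition yielding the recursion $m_n=\sum_{k=1}^{n} r_k \sum_{a_1+\cdots+a_k=n-k} m_{a_1}\cdots m_{a_k}$ and hence $K(G(z))=z$ for $(3)\Leftrightarrow(1)$ --- is precisely the standard proof given there. No gaps.
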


We now gather the analogue relations for infinitesimal free independence. 
Early versions of some of these can be tracked down to \cite{biane2003non}, with later variations and refinements established in \cite{fevrier2010infinitesimal, belinschi2012free, mingo2019non}. 
Here we provide condensed yet complete proofs based on those from the aforementioned works. 
\begin{proposition} \label{prop:infinitesimal-moment-cumulant-functional}
Let $\{m'_n\}_{n \geq 1}$ and $\{r'_n\}_{n \geq 1}$ be any two sequences of numbers and consider the formal power series  $g(z)  = \sum_{n=1}^{\infty}  m'_n \, z^{-n-1} $ and $r(z) = \sum_{n=1}^{\infty} r'_n \, z^{n-1}$. 
Suppose that $G(z)$ and $K(z)$ satisfy any of the four equivalent conditions from Proposition \ref{prop:basic.funct.relation}. 
The following are equivalent: 
\begin{enumerate}[(1')]
\item The functional relation $g(z) = -r(G(z)) \cdot G'(z)$ holds at the level of formal power series. 
\item The functional relation $r(z) = -g(K(z)) \cdot K'(z)$ holds at the level of formal power series. 
\item The combinatorial formula $m'_n  = \sum_{\pi \in NC(n)} \sum_{V \in \pi } r'_{\abs{V}} \cdot r_{\pi \setminus V}$ holds for every $n \geq 1$. 
\item The combinatorial formula $r'_n  = \sum_{\pi \in NC(n)} \sum_{V \in \pi } m'_{\abs{V}} \, m_{\pi \setminus V} \, \text{Möb}_{NC}(\pi,1_n)$ holds for every $n \geq 1$. 
\end{enumerate}
\end{proposition}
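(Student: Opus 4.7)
The plan is to deduce all four equivalences from Proposition \ref{prop:basic.funct.relation} via a first-order deformation argument, treating $(m'_n, r'_n, g, r)$ as infinitesimal perturbations of $(m_n, r_n, G, K)$ in an auxiliary parameter $\varepsilon$.

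First I would establish (1') $\Leftrightarrow$ (2') by direct functional manipulation. Differentiating the identity $K(G(z)) = z$ from Proposition \ref{prop:basic.funct.relation} gives $K'(G(z))\, G'(z) = 1$, i.e., $G'(K(z)) = 1/K'(z)$. Assuming (1'), composing on the right with $K$ yields $g(K(z)) = -r(z)\, G'(K(z)) = -r(z)/K'(z)$, which rearranges to (2'); the reverse direction is symmetric, composing (2') on the right with $G$ and using $K \circ G = \mathrm{id}$.

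For (1') $\Leftrightarrow$ (3'), I would introduce the perturbed series $\tilde K(z;\varepsilon) := K(z) + \varepsilon\, r(z)$, whose $z^{n-1}$-coefficients are $r_n + \varepsilon\, r'_n$. Proposition \ref{prop:basic.funct.relation} supplies a unique compositional inverse $\tilde G(z;\varepsilon) = z^{-1} + \sum_{n\ge 1} \tilde m_n(\varepsilon)\, z^{-n-1}$ with
\begin{equation*}
\tilde m_n(\varepsilon) \,=\, \sum_{\pi \in NC(n)} \prod_{V \in \pi} (r_{|V|} + \varepsilon\, r'_{|V|}) \,=\, m_n \,+\, \varepsilon \sum_{\pi \in NC(n)} \sum_{V \in \pi} r'_{|V|}\, r_{\pi \setminus V} \,+\, O(\varepsilon^2),
\end{equation*}
so setting $\hat g(z) := \partial_\varepsilon \tilde G(z;\varepsilon)\big|_{\varepsilon = 0}$ gives a formal Laurent series whose coefficients are precisely the right-hand side of (3'). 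Differentiating the identity $\tilde G(\tilde K(z;\varepsilon);\varepsilon) = z$ with respect to $\varepsilon$ at $\varepsilon = 0$ yields $\hat g(K(z)) + G'(K(z))\, r(z) = 0$, and substituting $z \mapsto G(w)$ produces $\hat g(w) = -r(G(w))\, G'(w)$. Thus (3') ---which asserts $g = \hat g$--- is equivalent to (1'), which asserts $g = -r \circ G \cdot G'$.

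For (2') $\Leftrightarrow$ (4'), I would run the mirror deformation: take $\tilde G(z;\varepsilon) := G(z) + \varepsilon\, g(z)$ and apply condition (4) of Proposition \ref{prop:basic.funct.relation} to the perturbed cumulants $\tilde r_n(\varepsilon) = \sum_{\pi \in NC(n)} \tilde m_\pi(\varepsilon)\, \text{Möb}_{NC}(\pi,1_n)$; their $\varepsilon$-derivative is the right-hand side of (4'), and differentiating $\tilde K(\tilde G(z;\varepsilon);\varepsilon) = z$ yields exactly $-g(K(z))\, K'(z)$ after the usual manipulations, matching (2'). Combined with (1') $\Leftrightarrow$ (2'), this closes the chain $(3') \Leftrightarrow (1') \Leftrightarrow (2') \Leftrightarrow (4')$. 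The main obstacle is to justify that every operation ---composition, differentiation in $z$, differentiation in $\varepsilon$, and extraction of the $\varepsilon^1$-coefficient--- is legitimate in the ring of formal Laurent series; in particular, one must verify that the compositional inverse furnished by Proposition \ref{prop:basic.funct.relation} depends polynomially on $\varepsilon$ coefficient-by-coefficient, so that $\partial_\varepsilon$ commutes with all other operations at $\varepsilon = 0$.
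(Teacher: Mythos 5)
Your proof is correct, and the first equivalence $(1')\Leftrightarrow(2')$ is exactly the paper's argument (differentiate $K(G(z))=z$, compose with $K$, rearrange). For the two combinatorial equivalences, however, you take a genuinely different route. The paper proves $(1')\Leftrightarrow(3')$ by brute force: it expands $-r(G(z))\cdot G'(z)$ coefficient by coefficient and matches the result against $\sum_{\pi\in NC(n)}\sum_{V\in\pi} r'_{|V|}\, r_{\pi\setminus V}$ through an enumerative argument over non-crossing partitions with a marked block (the factor $(n_k+1)$ counting the choices of that block); and it proves $(3')\Leftrightarrow(4')$ by transferring the equivalence $(3)\Leftrightarrow(4)$ of Proposition \ref{prop:basic.funct.relation} to the algebra of upper-triangular $2\times 2$ matrices $\bigl(\begin{smallmatrix} a & a' \\ 0 & a\end{smallmatrix}\bigr)$. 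Your $\varepsilon$-deformation modulo $\varepsilon^2$ is algebraically the same device as that matrix algebra (both are the dual numbers over $\C$), but you deploy it differently: you use it together with the chain rule on $\tilde G(\tilde K(z;\varepsilon);\varepsilon)=z$ to get $(1')\Leftrightarrow(3')$ and, mirrored, $(2')\Leftrightarrow(4')$, closing the chain through $(1')\Leftrightarrow(2')$. This buys a uniform treatment of both combinatorial equivalences and entirely avoids the explicit coefficient bookkeeping of the paper's enumerative step; the price, which you correctly identify as the main point needing care, is the observation that the four conditions of Proposition \ref{prop:basic.funct.relation} are universal polynomial identities valid over any commutative ring extension of $\C$ (the paper makes the identical remark for its Grassmann algebra), so that the coefficients of the compositional inverse are polynomials in $\varepsilon$ and $\partial_\varepsilon\big|_{\varepsilon=0}$ commutes with composition and with $\partial_z$. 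With that remark in place, your argument is complete.
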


\begin{proof}
From Proposition \ref{prop:basic.funct.relation}, we know that $G(K(z))=z$, and consequently  $G'(K(z))\cdot K'(z) = 1$.
Then, assuming \textit{(1')} holds, and replacing $z$ with $K(z)$, we obtain $g(K(z)) = -r(z) \cdot G'(K(z))$, which is equivalent to \textit{(2')}. 
Hence, \textit{(1')} implies \textit{(2')}.  
A similar argument gives the reversed implication. 

The equivalence between \textit{(1')} and  \textit{(3')} goes as follows. 
Letting $m_0 = 1$, and writing $G(z)  = \sum_{n=0}^{\infty}  m_n \, z^{-n-1} $,  direct computations give 
\begin{equation}\label{eqn:equiv_1-3_pre}
-r(G(z)) \cdot G'(z) = \sum_{n=1}^{\infty}
					\left(
							\sum_{k=1}^{n} \sum_{\substack{ n_1,\ldots,n_k=0 \\ n_1+\cdots +n_k+k=n}}^{n-k}
								(n_k+1) \cdot r'_k \cdot
								m_{n_1} \cdots m_{n_k}
					\right) z^{-n-1}.
\end{equation}
Then, using the moment-cumulant formula \textit{(3)} from Proposition \ref{prop:basic.funct.relation} for $n_1,\ldots,n_k$, an enumerative argument yields 
\begin{equation}\label{eqn:equiv_1-3}
	\sum_{k=1}^{n} \sum_{\substack{ n_1,\ldots,n_k=0 \\ n_1+\cdots +n_k+k=n}}^{n-k}
								(n_k+1) \cdot r'_k \cdot
								m_{n_1} \cdots m_{n_k}
=
\sum_{\pi \in NC(n)} \sum_{V \in \pi } r'_{\abs{V}} \cdot r_{\pi \setminus V} \ .
\end{equation}
Indeed, writing $V=\{v_1 < v_2 < \cdots < v_k \}$ for each block $V$ in the right-hand side of \eqref{eqn:equiv_1-3}, each product $r'_{\abs{V}} \cdot r_{\pi \setminus V}$ corresponds to $r'_k \cdot  r_{\pi_1}   r_{\pi_2}  \cdots  r_{\pi_k}$ for some non-crossing partitions $\pi_1 \in NC(n_1), \pi_2 \in NC(n_2), \ldots, \pi_k \in NC(n_k)$ with $n_i$ the number of elements strictly between $v_i$ and $v_{i+1}$ cyclically, i.e., $n_i = v_{i+1}-v_i-1$ for $i=1,2,\ldots,k-1$ and $n_k = n+v_1-v_k-1$, the factor $(n_k+1)$ amounts to the different choices $V$ associated to the same factor $r'_k \cdot  r_{\pi_1}   r_{\pi_2}  \cdots  r_{\pi_k}$. 
I follows from \eqref{eqn:equiv_1-3_pre} and \eqref{eqn:equiv_1-3} that \textit{(1')} and \textit{(3')} are equivalent. 

Finally, the equivalence between \textit{(3')} and  \textit{(4')} is due to that of \textit{(3)} and \textit{(4)} from Proposition \ref{prop:basic.funct.relation}. 
%
%
Indeed, \textit{(3)} and \textit{(4)} remain equivalent for sequences taken over the two-dimensional Grasmann algebra $\CC = \{ ( \begin{smallmatrix} a & a' \\ 0 & a \end{smallmatrix} )  : a,a' \in \C \} \subset \mathrm{Mat}_2(\C)$. 
Thus, letting $\widetilde{m}_n = ( \begin{smallmatrix} m_n & m'_n \\ 0 & m_n \end{smallmatrix} )$ and $\widetilde{r}_n = ( \begin{smallmatrix} r_n & r'_n \\ 0 & r_n \end{smallmatrix} )$, we have that $\widetilde{m}_n = \sum_{\pi \in NC(n)} \widetilde{r}_{\pi}$ if and only if $ \widetilde{r}_n = \sum_{\pi \in NC(n)} \widetilde{m}_{\pi} \, \text{Möb}_{NC}(\pi,1_n)$. 
But elementary matrix operations give $\widetilde{r}_{\pi} =  ( \begin{smallmatrix} r_{\pi} & \sum_{V \in \pi } r'_{\abs{V}} \cdot r_{\pi \setminus V} \\ 0 &  r_{\pi}  \end{smallmatrix} ) $ and $\widetilde{m}_{\pi} = ( \begin{smallmatrix} m_{\pi} & \sum_{V \in \pi } m'_{\abs{V}} \cdot m_{\pi \setminus V} \\ 0 &  m_{\pi}  \end{smallmatrix} )$. 
Therefore, the desired conclusion follows since $m_n = \sum_{\pi \in NC(n)} r_{\pi}$ and $ r_n = \sum_{\pi \in NC(n)} m_{\pi} \, \text{Möb}_{NC}(\pi,1_n)$ hold already by hypothesis. 
\end{proof}

\subsubsection{Free convolutions} \label{ssubsec:free_convolutions}

We will only consider distributions determined by their moments, or equivalently, by their free cumulants, in this work. 
Therefore, the following combinatorial definition of $\boxplus$, in terms of free cumulants, is enough for our purposes. 
Let us recall that, equivalently to the moment-cumulant formula \eqref{eqn:moment-cumulant-combinatorial_intro}, the \emph{free cumulants} of a distribution $\mu \in \MM(\rr)$  with finite moments of all orders can be defined as the sequence $(\freec{n}(\mu))_{n\geq 1}$ given for each $n$ by 
\begin{equation*}
r_n(\mu) \ \ = \sum_{\pi \in NC(n)} m_{\pi} (\mu) \, \text{Möb}_{NC}(\pi,1_n)  .
\end{equation*}

\begin{definition}[Free additive convolution]
For probability distributions $\mu,\nu \in \MM(\R)$ with finite moments of all orders, 
their \emph{free additive convolution} is the unique probability measure $\mu\boxplus \nu$ such that 
\begin{equation*}
r_n(\mu \boxplus \nu) = r_n(\mu) +  r_n(\nu) 
\quad \forall \ n \geq 1.
\end{equation*} 
\end{definition}
The free additive convolution $\boxplus$ is a commutative operation on $\MM(\rr)$, the set of all probability distributions on $\rr$. 
It was first defined for compactly supported distributions in 
\cite{voiculescu1986addition}, as the spectral distribution of the sum of freely independent bounded operators, and then extended to arbitrary distributions in \cite{maassen1992addition,bercovici1993free}, using the reciprocal Cauchy transform and the theory of unbounded operators affiliated with a von Neumann algebra, respectively. 

On the other hand, the free multiplicative convolution $\boxtimes$ is an operation that requires at least one of the factors to be in $\MM(\R_{\geq 0})$, the set of all probability distributions with support on the non-negative real numbers. 
It was first defined in 
\cite{voiculescu1987multiplication} and then extended to full generality in \cite{bercovici1993free}. 
As mentioned above, a combinatorial definition of $\boxtimes$ is enough for us. 

\begin{definition}[Free multiplicative convolution]
For probability distributions $\mu \in \MM(\R)$ and $\nu \in \MM(\R_{\geq 0})$ with finite moments of all orders,  
their \emph{free multiplicative convolution} is the unique probability measure $\mu \boxtimes \nu$  such that 
\begin{equation*}
r_n(\mu \boxtimes \nu)
	\ \	=
			\sum_{\substack{ \pi \in NC(n) }}  
					r_\pi(\mu) \,
					r_{Kr(\pi)}(\nu) 
\quad \forall \ n \geq 1.
\end{equation*}
\end{definition}
%
%
%
%

\subsubsection{Infinitesimal free convolutions} \label{ssubsec:inf_free_convolutions}

The infinitesimal free additive convolution $\boxplus_B$ is a commutative operation defined on pairs $(\mu,\mu')$ where $\mu \in \MM(\R)$ has moments of all orders and $\mu':\R[x] \to \R$ is a linear functional with $\mu'(1)=0$, referred to as infinitesimal distributions. 
This operation was introduced along with the notion of infinitesimal freeness in \cite{belinschi2012free}, where various scenarios for when the second coordinates $\mu'$ can be consistently identified with signed measures are established. 
As in the previous part, turning to a combinatorial definition of $\boxplus_B$ suffices for the present work.
%
%
%
%
%

\begin{notation}
For any linear functional $\mu':\R[x] \to \R$, 
we let $m_n(\mu')$ denote the value $\mu'(x^n)$ for each $n \geq 0$. 
In particular, if $\mu'$ is identified with a signed measure on $\R$, each $m_n(\mu')$ corresponds to the $n$-th moment of $\mu'$, namely, $\int_{\rr} t^n  \,  d \mu'(t)$. 
\end{notation}

The \emph{infinitesimal free cumulants} of $(\mu,\mu')$ is the sequence  $(r'_n(\mu,\mu'))_{n\geq 1}$ given by the cumulant-moment formula 
\begin{equation*}\label{eqn:infinitesimal-cumulant-moment-formula}
r'_n (\mu,\mu')\ \ = \sum_{\pi \in NC(n)} \sum_{V \in \pi } m_{\abs{V}}(\mu') \, m_{\pi \setminus V}(\mu) \, \text{Möb}(\pi,1_n)
\quad \forall \ n \geq 1.
\end{equation*}
By Proposition \ref{prop:infinitesimal-moment-cumulant-functional}, this formula can be inverted in a way that the infinitesimal free cumulants can also be defined recursively through the moment-cumulant formula 
\begin{equation*}\label{eqn:infinitesimal-moment-cumulant-formula}
m_n (\mu')\ \ = \sum_{\pi \in NC(n)} \sum_{V \in \pi } r'_{\abs{V}}(\mu,\mu') \, r_{\pi \setminus V}(\mu) 
\quad \forall \ n \geq 1.
\end{equation*}
%
%
It is immediate that the infinitesimal free cumulants $(r'_n (\mu,\mu'))_{n\geq 1}$ and the infinitesimal moments $(m_n (\mu'))_{n \geq 1}$ completely determine each other, in view of Proposition \ref{prop:infinitesimal-moment-cumulant-functional}, and since the same holds for $(r_n(\mu))_{n\geq 1}$ and $(m_n(\mu))_{n\geq 1}$. 
With this in hand, we provide the following. 

\begin{definition}[Infinitesimal free additive convolution] \label{def:infinitesimal_additive_convolution}
For infinitesimal distributions $(\mu,\mu')$ and $(\nu,\nu')$, their \emph{infinitesimal free additive convolution}  $(\mu,\mu')\boxplus_B (\nu,\nu')$ is defined as the pair $(\gamma,\gamma')$ where $\gamma=\mu\boxplus\nu$ and  $\gamma':\R[x] \to \R$ is the unique linear functional with $\gamma'(1)=0$ such that 
\begin{equation}\label{eqn:infinitesimal_cumulants_addition}
r'_n(\gamma,\gamma') = r'_n(\mu,\mu')+r'_n(\nu,\nu') 
\quad \forall \ n \geq 1. 
\end{equation}
\end{definition}

The \emph{infinitesimal Cauchy transform} and the \emph{infintesimal $R$-transform} of $(\mu,\mu')$ as formal power series are respectively given by
\begin{equation}\label{eqn:infinitesimal_cauchy_and_r}
G_{\mu'}(z) \, = \sum_{n=0}^\infty m_n(\mu') \, z^{-n-1} 
\qquad \text{and} \qquad 
\rinf_{\mu,\mu'}(z) \ =\sum_{n=1}^\infty r'_n (\mu,\mu') \, z^{n-1}. 
\end{equation}
Due to Proposition \ref{prop:infinitesimal-moment-cumulant-functional}, these formal power series satisfy the relations  
\begin{equation} \label{eqn:Mingo.relation}
G_{\mu'} \  = -(\rinf_{\mu,\mu'} \circ G_\mu) \, G'_\mu 
\qquad \text{and} \qquad 
\rinf_{\mu,\mu'} = -(G_{\mu'} \circ K_\mu) \, K_\mu'.
\end{equation}
Notice that  \eqref{eqn:infinitesimal_cumulants_addition} written in terms of infintesimal $R$-transforms becomes 
\begin{equation*}
\rinf_{\gamma,\gamma'}(z) = \rinf_{\mu,\mu'}(z) + \rinf_{\nu,\nu'}(z) \, .
\end{equation*}
It follows that a pair $(\gamma,\gamma')$ is the infinitesimal free additive convolution $ (\mu,\mu')\boxplus_B (\nu,\nu')$ if and only if $\gamma = \mu \boxplus \nu$ and $\gamma'$ has Cauchy transform $G_{\gamma'}$ given by 
\begin{equation}\label{eqn:Cauchy_Rtransforms_infinitesimal_additive_preli}
G_{\gamma'} = - ( \rinf_{\mu,\mu'} \circ G_{\mu \boxplus \nu}  ) \cdot G'_{\mu \boxplus \nu}
- ( \rinf_{\nu,\nu'} \circ G_{\mu \boxplus \nu}  ) \cdot G'_{\mu \boxplus \nu} . 
\end{equation}
By  Proposition \ref{prop:infinitesimal-moment-cumulant-functional}, the latter is equivalent to the infinitesimal moments $(m_n (\gamma'))_{n \geq 1}$ satisfying 
\begin{equation*}
m_n (\gamma') 
		= 
			\sum_{\pi \in NC(n)} 
			\sum_{V \in \pi } 	
					\left( r'_{\abs{V}}(\mu,\mu')+r'_{\abs{V}}(\nu,\nu') \right) 
				\cdot 
					r_{\pi \setminus V} (\mu \boxplus \nu)
\quad \forall \ n \geq 1. 
\end{equation*}
We now give a combinatorial definition of the infinitesimal free multiplicative convolution $\boxtimes_B$. 

\begin{definition}[Infinitesimal free multiplicative convolution]
 \label{def:infinitesimal_multiplicative_convolution}
For infinitesimal distributions $(\mu,\mu')$ and $(\nu,\nu')$ with  $\nu \in \MM(\R_{\geq 0})$, their \emph{infinitesimal free multiplicative convolution}  $(\mu,\mu')\boxtimes_B (\nu,\nu')$ is defined as the pair $(\gamma,\gamma')$ where $\gamma = \mu\boxtimes\nu$ and  $\gamma':\R[x] \to \R$ is the unique linear functional with $\gamma'(1)=0$ such that 
\begin{equation*}
r'_n(\gamma,\gamma') 
	\ \ =  
			\sum_{\pi\in NC(n)}\left(
					\sum_{V\in\pi} 
						r'_n(\mu,\mu')  r_{\pi\backslash V}(\mu)
						\cdot 
						r_{Kr(\pi)}(\nu) 
		+
			\sum_{W\in Kr(\pi)}
					r_\pi(\mu)  
					\cdot
					r'_n(\nu,\nu')  r_{Kr(\pi)\backslash W}(\nu)
			\right)
\quad \forall \ n \geq 1. 
\end{equation*}
\end{definition}
In terms of moments, see \cite[Proposition 5.1]{fevrier2010infinitesimal}, the last expression is equivalent to 
\begin{equation}\label{eqn:infinitesimal_moments_of_a_product}
m_n(\gamma') 
  \ \  = 
        \sum_{\pi\in NC(n)}\left(
            \sum_{V\in\pi} 
                m_{|V|}(\mu') \, m_{\pi\backslash V}(\mu) 
                \cdot 
                r_{Kr(\pi)}(\nu) 
    +
        \sum_{W\in Kr(\pi)}
                m_\pi(\mu)
                \cdot 
                r'_{\abs{W}}(\nu,\nu') \,  r_{Kr(\pi)\backslash W}(\nu) \right)
\quad \forall \ n \geq 1. 
\end{equation}
%

\subsubsection{Subordination} 

In some instances, namely, Theorem \ref{prop:fluctuations_infinitesimal_subordination} and Corollary \ref{Cor. infmult}, we will use the analytical approach to infinitesimal convolutions via subordination functions from \cite{belinschi2012free}. 
For this, let us recall that the \emph{analytical Cauchy transform} of a possibly signed measure $\mu$ on $\R$ is given by 
\begin{equation}\label{Cauchy} 
\mathcal{G}_\mu(z)  =\int_\rr\frac{1}{z-t} \,  d \mu(t) 
\quad \forall \ z \in \C \setminus \R.   
\end{equation}
This is an analytical map that takes the complex upper half-plane $\C^+ = \{ z \in \C : \mathfrak{Im}(z) > 0 \}$ to the lower half-plane  $\C^-= \{ z \in \C : \mathfrak{Im}(z) < 0 \}$, and vice versa.  
Moreover, when $\mu$ has compact support, $\mathcal{G}_\mu(z)$ has expansion near infinity, in the sense that 
\begin{equation*} 
\mathcal{G}_\mu(z)  = 
		\sum_{n=0}^{\infty}   m_n(\mu) \, z^{-n-1} 
\end{equation*}
for $\abs{z}$ large enough. 
If, in addition, $\mu$ is a probability distribution, then there exists an analytical map $\mathcal{R}_\mu(z)$, called the \emph{analytical $R$-transform} of $\mu$, defined on a punctured neighborhood of $0$ through the relation
\begin{equation*}
\mathcal{G}_\mu \left( \mathcal{R}_\mu(z) + \tfrac{1}{z} \right) = z  .
\end{equation*}
See \cite[Theorem 3.18]{mingo2017free} for more details. 
It is known from \cite{voiculescu1993analogues,biane1998processes} that for any two probability distributions $\mu, \nu \in \MM(\R)$, there exist unique analytic maps $\omega_1,\omega_2:\cc^+\to\cc^+$ such that
\begin{equation} \label{eqn:def.subordination}
\GG_{\mu\boxplus \nu}(z)
		=
			 \GG_\mu\circ \omega_1(z) 
		=
			\GG_\nu\circ \omega_2(z)
		=
			\frac{1}{\omega_1(z)+\omega_2(z)-z}    
\qquad \forall \ z \in \C^+.   
\end{equation}
The maps $\omega_1$ and $\omega_2$ are referred to as \emph{subordination functions}, and next we provide their relation to infinitesimal convolution. 

\begin{proposition}[{\cite[Theorem 26]{belinschi2012free}}]
\label{prop:infintesimal.subordination}
If $(\gamma,\gamma')=(\mu,\mu')\boxplus_B (\nu,\nu')$, then
\begin{equation*}
\GG_{\gamma'}
		=
				(\GG_{\mu'}\circ\omega_1) \, \omega'_1
			+
				(\GG_{\nu'}\circ\omega_2) \, \omega'_2.
\end{equation*} 
\end{proposition}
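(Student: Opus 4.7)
The plan is to reduce the statement to a short direct computation that combines three ingredients already at our disposal: the functional characterization of infinitesimal free additive convolution in \eqref{eqn:Cauchy_Rtransforms_infinitesimal_additive_preli}, the infinitesimal moment-cumulant relation $G_{\mu'} = -(\rinf_{\mu,\mu'}\circ G_\mu)\cdot G'_\mu$ from \eqref{eqn:Mingo.relation}, and the chain rule applied to the subordination identity $\GG_{\mu\boxplus\nu}=\GG_\mu\circ\omega_1=\GG_\nu\circ\omega_2$ from \eqref{eqn:def.subordination}.

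First I would handle the $\omega_1$-term. Substituting $\omega_1(z)$ into the infinitesimal Mingo relation for $(\mu,\mu')$ gives
\begin{equation*}
\GG_{\mu'}(\omega_1(z)) \; = \; -\,\rinf_{\mu,\mu'}\!\bigl(\GG_\mu(\omega_1(z))\bigr)\cdot \GG'_\mu(\omega_1(z)).
\end{equation*}
Multiplying through by $\omega'_1(z)$, using the subordination identity $\GG_\mu\circ\omega_1=\GG_{\mu\boxplus\nu}$ on the inner composition, and applying the chain rule $(\GG_\mu\circ\omega_1)'=(\GG'_\mu\circ\omega_1)\cdot\omega'_1=\GG'_{\mu\boxplus\nu}$ to the outer factor, I obtain
\begin{equation*}
(\GG_{\mu'}\circ\omega_1)\cdot\omega'_1 \; = \; -\,(\rinf_{\mu,\mu'}\circ \GG_{\mu\boxplus\nu})\cdot \GG'_{\mu\boxplus\nu}.
\end{equation*}
An entirely symmetric argument applied to $(\nu,\nu')$ and $\omega_2$, using $\GG_\nu\circ\omega_2=\GG_{\mu\boxplus\nu}$, yields
\begin{equation*}
(\GG_{\nu'}\circ\omega_2)\cdot\omega'_2 \; = \; -\,(\rinf_{\nu,\nu'}\circ \GG_{\mu\boxplus\nu})\cdot \GG'_{\mu\boxplus\nu}.
\end{equation*}
Summing these two equalities and comparing with \eqref{eqn:Cauchy_Rtransforms_infinitesimal_additive_preli}, which expresses $G_{\gamma'}$ as exactly this sum, finishes the proof.

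The only subtlety is bridging the formal-power-series statements developed in Section~2.2 with the \emph{analytical} Cauchy transforms appearing in \eqref{Cauchy} and in the hypothesis on $\omega_1,\omega_2$. For compactly supported $\mu,\nu$ this is automatic: the formal series \eqref{eqn:infinitesimal_cauchy_and_r} coincide near infinity with the analytical transforms, so the identity \eqref{eqn:Mingo.relation} holds as an identity of analytic functions on a neighborhood of $\infty$; by uniqueness of analytic continuation, the composition with $\omega_1$ (which maps $\C^+$ to $\C^+$ and tends to $\infty$ as $z\to\infty$ nontangentially) is legitimate. Under the standing assumption that $\mu'$ and $\nu'$ can be identified with signed measures so that $\GG_{\mu'}$ and $\GG_{\nu'}$ are genuine analytic functions on $\C\setminus\R$, this is the only technical point to verify. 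I expect this analytic-vs-formal bookkeeping, rather than any algebraic manipulation, to be the main obstacle, but it is handled exactly as in the original argument of Belinschi--Shlyakhtenko and does not require any new idea beyond standard properties of the subordination functions.
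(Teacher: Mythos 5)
The paper offers no proof of this proposition at all: it is imported verbatim as \cite[Theorem 26]{belinschi2012free}, so there is nothing internal to compare your argument against. That said, your derivation is correct and is a legitimate self-contained alternative to citing the reference. Starting from the paper's combinatorial definition of $\boxplus_B$ in the form \eqref{eqn:Cauchy_Rtransforms_infinitesimal_additive_preli}, substituting $\omega_1$ into $\GG_{\mu'} = -(\rinf_{\mu,\mu'}\circ \GG_\mu)\cdot \GG'_\mu$, and using $\GG_\mu\circ\omega_1=\GG_{\mu\boxplus\nu}$ together with the chain rule $(\GG'_\mu\circ\omega_1)\cdot\omega'_1=\GG'_{\mu\boxplus\nu}$ does collapse each summand of \eqref{eqn:Cauchy_Rtransforms_infinitesimal_additive_preli} to $(\GG_{\mu'}\circ\omega_i)\,\omega'_i$; the algebra is airtight and there is no circularity, since the subordination functions come from \eqref{eqn:def.subordination} independently of any infinitesimal structure. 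What your route buys is a proof entirely within the moment-cumulant framework of Section 2, essentially reversing the computation the paper later performs in Theorem \ref{prop:fluctuations_infinitesimal_subordination}; what it costs is exactly the formal-versus-analytic bookkeeping you flag at the end. That caveat is real and should not be waved away too quickly: \eqref{eqn:Cauchy_Rtransforms_infinitesimal_additive_preli} and \eqref{eqn:Mingo.relation} are identities of formal power series, while the proposition asserts an identity of analytic functions on $\C^+$, so one genuinely needs $\mu,\nu$ (and the signed measures $\mu',\nu'$) to have, say, compact support so that the formal series are the Laurent expansions of the analytic transforms near infinity, after which analytic continuation and the asymptotics $\omega_i(z)=z+O(1)$ finish the job. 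Within the moment-determined setting the paper works in throughout, this is exactly the hypothesis in force, so your proof stands.
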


Similarly, it is known from \cite{biane1998processes} that there exist subordination functions for the multiplicative convolution. 
In this case, we must consider the \emph{moment generating function} of a possibly signed measure $\mu$ on $\R$ given by 
\begin{equation*}
\psi_{\mu}(z) = \int_{\mathbb{R}} \frac{zt}{1 - zt}  \, d \mu(t)
\quad \forall \ z \in \C \setminus \R.   
\end{equation*}
Then, for any two probability distributions $\mu, \nu \in \MM(\R_{\geq 0})$, there exist unique analytic maps $\omega_1,\omega_2:\C \setminus \R  \to  \C \setminus \R$ such that
\begin{equation} \label{MultSub} 
\psi_{\mu \boxtimes \nu} (z)  
		= 
				\psi_{\mu}  \circ \omega_1(z)  
		= 
				\psi_{\nu}  \circ \omega_2(z)  
\quad \text{ and} \quad
\frac{ \psi_{\mu_1 \boxtimes \mu_2}(z)}{1 + \psi_{\mu_1 \boxtimes \mu_2}(z)}
			=
		\frac{\omega_1(z) \omega_2(z)}{z}    
\quad \forall \ z \in \C \setminus \R.   
\end{equation}
\begin{proposition}[Reformulation of {\cite[Proposition 43]{belinschi2012free}}]
\label{prop:infintesimal.subordinationmulti}
If $(\gamma,\gamma')=(\mu,\mu')\boxtimes_B (\nu,\nu')$, then
\begin{equation}\label{eqn:multiplicative_infinitesimal_cauchy}
\GG_{\gamma'}=(\GG_{\mu'} \circ\tau_1)\tau'_1+(\GG_{\nu'}\circ\tau_2)\tau'_2,\end{equation}
where $ \tau_{i} (z)  = \frac{1}{\omega_{i}(1/z)}.$ 
\end{proposition}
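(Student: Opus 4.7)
The plan is to derive the identity by a change of variable from the original form of \cite[Proposition 43]{belinschi2012free}, which is stated in terms of the multiplicative $\psi$-subordination functions $\omega_1, \omega_2$ of \eqref{MultSub} and the moment generating functions $\psi_{\mu'}, \psi_{\nu'}$ of the infinitesimal parts. That result takes the form
\[\psi_{\gamma'}(z) \;=\; \frac{z\,\omega_1'(z)}{\omega_1(z)}\,(\psi_{\mu'}\circ\omega_1)(z) \;+\; \frac{z\,\omega_2'(z)}{\omega_2(z)}\,(\psi_{\nu'}\circ\omega_2)(z).\]

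First, I would record the elementary identity $z\GG_\eta(z) = \eta(1) + \psi_\eta(1/z)$, which follows by comparing moment expansions and is valid at the level of formal Laurent/power series for any linear functional $\eta$ on $\R[x]$ with finite moments of all orders. Since $\mu'$, $\nu'$, and $\gamma'$ all annihilate $1$, this specializes to $\GG_{\mu'}(z) = \psi_{\mu'}(1/z)/z$, and analogously for $\nu'$ and $\gamma'$.

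Second, I would unpack the substitution $\tau_i(z) = 1/\omega_i(1/z)$. A direct chain-rule computation gives
\[\tau_i'(z) \;=\; \frac{\omega_i'(1/z)}{z^2\,\omega_i(1/z)^2} \qquad \text{and} \qquad \frac{1}{\tau_i(z)} \;=\; \omega_i(1/z).\]
Combining these with the identity of the previous paragraph,
\[\GG_{\mu'}(\tau_1(z))\,\tau_1'(z) \;=\; \frac{\psi_{\mu'}(\omega_1(1/z))}{\tau_1(z)}\cdot \tau_1'(z) \;=\; \frac{\omega_1'(1/z)\,\psi_{\mu'}(\omega_1(1/z))}{z^2\,\omega_1(1/z)},\]
and symmetrically for the term involving $\nu'$ and $\omega_2$.

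Finally, I would substitute $z \mapsto 1/z$ in the $\psi$-form of \cite[Proposition 43]{belinschi2012free} displayed at the start and divide the result by $z$. The left-hand side then becomes $\psi_{\gamma'}(1/z)/z = \GG_{\gamma'}(z)$, while the right-hand side collapses, term by term, into precisely the sum $(\GG_{\mu'}\circ\tau_1)\,\tau_1' + (\GG_{\nu'}\circ\tau_2)\,\tau_2'$ as computed above. This yields \eqref{eqn:multiplicative_infinitesimal_cauchy}. The main obstacle is purely bookkeeping: threading the logarithmic-derivative factors $\omega_i'/\omega_i$ and the powers of $z$ consistently through the substitution $z \mapsto 1/z$; once the chain-rule computation of $\tau_i'$ is in hand, the verification reduces to an algebraic identification of formal power series.
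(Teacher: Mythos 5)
Your proposal is correct and follows essentially the same route as the paper: both start from the $\psi$-form of \cite[Proposition 43]{belinschi2012free}, use the identity $\GG_{\eta}(z)=\psi_{\eta}(1/z)/z$ for functionals annihilating $1$, compute $\tau_i'$ by the chain rule, and substitute $z\mapsto 1/z$ to match terms. The bookkeeping checks out, so nothing further is needed.
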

\begin{proof} 
Proposition 43 in \cite{belinschi2012free}, states that 
\begin{equation*}
\frac{\psi_{\gamma'} (z)}{z}
        =
\frac{\psi_{\mu'} (\omega_1(z))}{\omega_1(z)} \omega_1'(z) 
+ 
\frac{\psi_{\nu'} (\omega_2(z))}{\omega_2(z)} \omega_2'(z),
\end{equation*}
which translates into 
\begin{equation}\label{eqn:multiplicative_infinitesimal_psi}
\GG_{\gamma'}(z) = \tfrac{1}{z} \psi_{\gamma'}\left( \tfrac{1}{z}\right)
=
\frac{1}{z^2} 
\frac{\psi_{\mu'} (\omega_1(1/z))}{\omega_1(1/z)} \cdot \omega_1'(1/z) 
+ 
\frac{1}{z^2}  \frac{\psi_{\nu'} (\omega_2(1/z))}{\omega_2(1/z)} \cdot \omega_2'(1/z) \, .
\end{equation}
The desired relation  follows from the identities
\begin{equation*}
 \tau'_{i} (z)   
 =\frac{\omega'_{i}(1/z)}{(z \cdot \omega_i(1/z))^2}
\text{\qquad and \qquad}
\omega_i(1/z) \cdot \psi_{\mu_i'} (\omega_{i}(1/z)) 
=
 \GG_{\mu_i'} \left( \tau_{i}(z) \right).
\end{equation*}
\end{proof}



\subsection{Finite free probability}

In this part, we collect some results on finite-free probability from \cite{arizmendi2018cumulants,arizmendi2023finite} that are used next. 
To this end, recall that any polynomial $p_d\in \pols_d(\rr) $, the set of all monic and real-rooted polynomials in $x$ of degree $d$, can be uniquely written as
\begin{equation*}
p_d(x)=\sum_{k=0}^d x^{d-k}(-1)^k \binom{d}{k} \coef{k}{p_d} 
\end{equation*}
for some coefficients $\coef{k}{p_d} \in \rr$.
Moreover, the \emph{finite-free additive convolution} of $p_d,q_d\in \pols_d(\rr) $ is the polynomial $p_d\boxplus_d q_d \in \mathbb{R}_d[x]$ given by 
\begin{equation*}
[p_d \boxplus_d q_d](x)
	=
		\sum_{k=0}^d x^{d-k}(-1)^k 
				\binom{d}{k} 
				\left[
				\sum_{i=0}^{k}\binom{k}{i} \coef{i}{p_d} \coef{k-i}{q_d}\right] ; 
\end{equation*}
while their \emph{finite-free multiplicative convolution} is the polynomial $p_d\boxtimes_d q_d \in \mathbb{R}_d[x]$ given by 
\begin{equation*}
[p_d \boxtimes_d q_d](x) =
			\sum_{k=0}^d x^{d-k}(-1)^k \binom{d}{k} \Big[ \coef{k}{p_d} \coef{k}{q_d} \Big].
\end{equation*}
\begin{notation}
The \emph{root distribution} of $p_d\in \pols_d(\rr)$ is the discrete probability distribution $\mu_{p_d}$ that puts a mass of $1/d$ at each of its $d$ roots counting multiplicities, written 
$\lambda_d(p_d)  \leq 
\lambda_{d-1}(p_d) \leq \cdots \leq
\lambda_{1}(p_d) $.   
To ease notation, for each $n \geq 0$, we let $m_n(p_d)$ denote the $n$-th moment of $\mu_{p_d}$, that is, 
\begin{equation*} 
m_n(p_d) =\frac{1}{d}\sum_{i=1}^d  ( \lambda_{i}(p_d) )^n
\end{equation*}
\end{notation}

 The \emph{finite-free cumulants} of each $p_d\in \pols_d(\rr)$ is the sequence $\kappa_1(p_d), $ $\kappa_2(p_d),  \ldots, \kappa_d(p_d)$ given by 
\begin{equation}\label{eqn:finite-cumulants_coeff_preli}
\kappa_n(p_d) = 
			\frac{(-d)^{n-1}}{(n-1)!} 
			\sum_{\pi \in \partlat(n)} 
				\text{Möb}(\pi,1_n)
				\, \coef{\pi}{p_d}
\end{equation} 
with $\coef{\pi}{p_d} : =  \prod_{V\in \pi} \coef{\blocks{\pi}}{p_d}$ for $\pi \in \partlat(n)$ and $n = 1, 2 , \ldots, d$. 
One of the main properties of $\kappa_n( \cdot)$ is the linearization of the finite-free additive convolution $\boxplus_d$.
\begin{proposition}[{\cite[Proposition 3.6]{arizmendi2018cumulants}}]
For any $p_d,q_d \in \pols_d(\rr) $ and  $1 \leq n \leq d$, we have that 
\begin{equation*}
\kappa_n(p_d \boxplus_d q_d)=\kappa_n(p_d) + \kappa_n(q_d)
\end{equation*}
\end{proposition}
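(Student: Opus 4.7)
The plan is to reduce the claimed linearization to the classical exponential formula by associating to each polynomial $p_d \in \pols_d(\rr)$ the exponential generating function of its normalized coefficients. Specifically, I would work with the formal power series
\begin{equation*}
F_{p_d}(z) \, = \, \sum_{k=0}^{d} \coef{k}{p_d} \, \frac{z^k}{k!} \, ,
\end{equation*}
and similarly $F_{q_d}(z)$. The definition of $\boxplus_d$ gives $\coef{k}{p_d \boxplus_d q_d} = \sum_{i=0}^{k} \binom{k}{i} \coef{i}{p_d} \coef{k-i}{q_d}$, which is precisely a binomial convolution, so the product rule for exponential generating functions yields
\begin{equation*}
F_{p_d \boxplus_d q_d}(z) \, \equiv \, F_{p_d}(z) \cdot F_{q_d}(z) \pmod{z^{d+1}} \, .
\end{equation*}

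Next, since $p_d$ and $q_d$ are monic we have $\coef{0}{p_d} = \coef{0}{q_d} = 1$, so the logarithms $L_{p_d}(z) := \log F_{p_d}(z)$ and $L_{q_d}(z) := \log F_{q_d}(z)$ are well-defined as formal power series with no constant term. The identity $L_{p_d \boxplus_d q_d} \equiv L_{p_d} + L_{q_d} \pmod{z^{d+1}}$ then follows. Writing $L_{p_d}(z) = \sum_{n \geq 1} b_n(p_d) z^n / n!$, I would deduce
\begin{equation*}
b_n(p_d \boxplus_d q_d) \, = \, b_n(p_d) + b_n(q_d) \qquad \text{for all} \quad 1 \leq n \leq d \, .
\end{equation*}

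Finally, I would invoke the classical exponential formula (Möbius inversion on the partition lattice $\partlat(n)$) to identify
\begin{equation*}
b_n(p_d) \, = \, \sum_{\pi \in \partlat(n)} \text{Möb}(\pi,1_n) \, \coef{\pi}{p_d} \, ,
\end{equation*}
which matches the definition \eqref{eqn:finite-cumulants_coeff_preli} of $\kappa_n(p_d)$ up to the prefactor $(-d)^{n-1}/(n-1)!$. Since this prefactor depends only on $n$ and $d$, multiplying through preserves additivity and delivers the claim.

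The main subtlety is checking the compatibility between the degree-$d$ truncation and the logarithm, i.e., that the $z^n$-coefficient of $\log F_{p_d}(z)$ for $n \leq d$ genuinely depends only on $\coef{0}{p_d}, \coef{1}{p_d}, \ldots, \coef{n}{p_d}$. This is routine from the series $\log(1+u) = \sum_{n \geq 1} (-1)^{n-1} u^n / n$, but must be addressed so that both the identity $\kappa_n(p_d) = \tfrac{(-d)^{n-1}}{(n-1)!} b_n(p_d)$ and its additivity make sense without reference to nonexistent coefficients of degree $> d$. A purely combinatorial alternative would expand $\coef{\pi}{p_d \boxplus_d q_d}$ with the multinomial formula and regroup terms using the multiplicativity of $\text{Möb}(\cdot, 1_n)$ across block refinements, but the generating-function route above is cleaner and reveals the structural reason behind additivity.
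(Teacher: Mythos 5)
Your proof is correct: the identification of $\boxplus_d$ at the level of coefficients as a binomial convolution, followed by taking the logarithm of the exponential generating function and recognizing its coefficients as the Möbius sum $\sum_{\pi \in \partlat(n)} \text{Möb}(\pi,1_n)\, \coef{\pi}{p_d}$, is exactly the mechanism behind the linearization, and your remark about the truncation mod $z^{d+1}$ being compatible with the logarithm disposes of the only real subtlety. The paper itself gives no proof (it cites \cite[Proposition 3.6]{arizmendi2018cumulants}), and your argument is essentially the standard one from that reference, where the same Möbius inversion on the partition lattice is used to turn the multiplicative coefficient relation into an additive one.
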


The cumulant-coefficient formula \eqref{eqn:finite-cumulants_coeff_preli} was introduced in \cite{arizmendi2018cumulants}, among other relations describing how the sequences  $(m_n(p_d))_{n\geq 1}$, $(\kappa_n(p_d))_{n=1}^{d}$,  and $(\coef{n}{ p_d})_{n=1}^{d}$ determine each other explicitly. 
In particular, it was proved in \cite[Corollary 4.6]{arizmendi2018cumulants} that the following cumulant-moment formula holds  
\begin{equation} \label{eq.finitemomentcumulant}    
\kappa_n(p_d)
	=	
		\frac{(-1)^{n-1}}{(n-1)!} 
		\sum_{\substack{ \pi,\theta \in P(n) \\  \pi \lor \theta =1_n }}
				d^{\blocks{\pi}+\blocks{\theta}-n-1}
				\text{Möb}(0_n,\pi)
				\text{Möb}(0_n,\theta) 
				m_\pi(p_d)  \, .
\end{equation}
%
%
%
%
%
%
%

The next theorem, concerning finite-free multiplicative convolution, is, in a certain way, a departing point of our work. 

\begin{theorem}[{\cite[Theorem 1.1]{arizmendi2023finite}}]  \label{thm.cumulant.of.products}
For any $p_d,q_d \in \pols_d(\rr) $ and  $1 \leq n \leq d$, we have that 
\begin{equation}\label{eq.cumulant.of.products}
\kappa_n(p_d \boxtimes_d q_d )	
		=
			\frac{(-1)^{n-1}}{(n-1)!} 
			\sum_{\substack{ \pi, \theta\in P(n) \\ \pi\lor  \theta=1_n }} 
					d^{\blocks{\pi}+\blocks{\theta}-n-1}
					\text{Möb}(0_n,\pi)
					\text{Möb}(0_n,\theta) 
					\kappa_\pi(p_d )
					\kappa_\theta(q_d ) 
\end{equation}
and
\begin{equation} \label{eq:multiplicativemomentcumulant}
m_n(p_d \boxtimes_d q_d)
	=
		\frac{(-1)^{n-1}}{(n-1)!} 
		\sum_{\substack{ \pi, \theta\in P(n) \\  \pi\lor \theta=1_n }} 
				d^{\blocks{\pi}+\blocks{\theta}-n-1}
				\text{Möb}(0_n,\pi)
				\text{Möb}(0_n,\theta) 
				\kappa_\pi(p_d) 
				m_\theta(q_d ) \, .
\end{equation}
\end{theorem}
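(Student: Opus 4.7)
The plan is to transport the diagonal action of finite-free multiplicative convolution on coefficients through the change-of-basis relations among coefficients, finite-free cumulants, and moments. The starting observation, immediate from the definition of $\boxtimes_d$, is that for every $k = 0, 1, \ldots, d$ one has $\coef{k}{p_d \boxtimes_d q_d} = \coef{k}{p_d} \coef{k}{q_d}$, and consequently $\coef{\rho}{p_d \boxtimes_d q_d} = \coef{\rho}{p_d} \coef{\rho}{q_d}$ for every partition $\rho \in P(n)$.

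First I would apply the cumulant-coefficient formula \eqref{eqn:finite-cumulants_coeff_preli} to $p_d \boxtimes_d q_d$ and substitute the multiplicativity, obtaining
\begin{equation*}
\kappa_n(p_d \boxtimes_d q_d) = \frac{(-d)^{n-1}}{(n-1)!} \sum_{\rho \in P(n)} \text{Möb}(\rho, 1_n) \, \coef{\rho}{p_d} \, \coef{\rho}{q_d}.
\end{equation*}
Next I would invert the cumulant-coefficient formula block-by-block: each $\coef{|V|}{p_d}$ appearing in $\coef{\rho}{p_d}$ is expanded as a weighted sum over partitions of $V$ in the finite-free cumulants of $p_d$, and symmetrically for $q_d$. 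Reindexing, the right-hand side becomes a triple sum in which $\pi$ and $\theta$ are partitions of $[n]$ refining $\rho$, so that the outer sum over $\rho$ ranges over $\rho \geq \pi \vee \theta$.

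The combinatorial heart of the argument is the collapse of this outer sum via Möbius inversion on the partition lattice: the sum $\sum_{\rho \geq \pi \vee \theta} (\ldots)$ vanishes unless $\pi \vee \theta = 1_n$, and the surviving contribution produces the two decoupled factors $\text{Möb}(0_n, \pi)$ and $\text{Möb}(0_n, \theta)$. Consolidating the powers of $d$ arising from the prefactor $(-d)^{n-1}$ and from the inversions on each factor yields the weight $d^{|\pi|+|\theta|-n-1}$ appearing in \eqref{eq.cumulant.of.products}. The moment formula \eqref{eq:multiplicativemomentcumulant} then follows from the same argument, with the inversion on $q_d$ replaced by the finite-free moment-coefficient relation so that $\kappa_\theta(q_d)$ becomes $m_\theta(q_d)$; alternatively, \eqref{eq:multiplicativemomentcumulant} is derivable directly from \eqref{eq.cumulant.of.products} by applying the moment-cumulant formula \eqref{eq.finitemomentcumulant} to $q_d$ and collecting terms.

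The main obstacle is the combinatorial consolidation: justifying that the $\rho$-sum precisely isolates pairs $(\pi, \theta)$ with $\pi \vee \theta = 1_n$ and produces the two independent Möbius factors, while the three separate sources of $d$-dependence (the outer prefactor and the two block-wise inversions) conspire into the single exponent $|\pi| + |\theta| - n - 1$. Careful block-by-block bookkeeping, separating contributions of blocks of $\rho$ on which $\pi$ and $\theta$ interact nontrivially from those on which they do not, is essential; the condition $\pi \vee \theta = 1_n$ then encodes the ``irreducibility'' of the pair $(\pi, \theta)$ and is exactly what survives after the Möbius sum is performed.
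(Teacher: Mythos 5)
This theorem is imported from \cite{arizmendi2023finite}; the paper gives no proof of it, so your proposal can only be measured against the argument in that reference. For the cumulant identity \eqref{eq.cumulant.of.products} your route is correct and is essentially the standard one. Inverting \eqref{eqn:finite-cumulants_coeff_preli} by classical M\"obius inversion on $P(n)$ gives the exact relation $\coef{n}{p_d}=d^{-n}\sum_{\pi\in P(n)}d^{\blocks{\pi}}\,\text{Möb}(0_n,\pi)\,\kappa_\pi(p_d)$, whose block-multiplicative extension is $\coef{\rho}{p_d}=d^{-n}\sum_{\pi\leq\rho}d^{\blocks{\pi}}\text{Möb}(0_n,\pi)\kappa_\pi(p_d)$ with a prefactor $d^{-n}$ that is independent of $\rho$; substituting this and the analogous expansion for $q_d$ into $\kappa_n(p_d\boxtimes_d q_d)=\tfrac{(-d)^{n-1}}{(n-1)!}\sum_\rho\text{Möb}(\rho,1_n)\coef{\rho}{p_d}\coef{\rho}{q_d}$ and using $\sum_{\rho\geq\pi\vee\theta}\text{Möb}(\rho,1_n)=[\pi\vee\theta=1_n]$ produces exactly the weight $\tfrac{(-1)^{n-1}}{(n-1)!}d^{\blocks{\pi}+\blocks{\theta}-n-1}\text{Möb}(0_n,\pi)\text{Möb}(0_n,\theta)$. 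One correction of emphasis: the two decoupled M\"obius factors come from the block-wise inversions themselves, not from the collapse of the $\rho$-sum, which only contributes the indicator of connectedness.

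The moment identity \eqref{eq:multiplicativemomentcumulant} is where your argument has a genuine gap. The coefficient-moment relation is $\coef{n}{q_d}=\tfrac{1}{(d)_n}\sum_{\theta}d^{\blocks{\theta}}\text{Möb}(0_n,\theta)m_\theta(q_d)$ with $(d)_n=d(d-1)\cdots(d-n+1)$, so its multiplicative extension carries the prefactor $\prod_{W\in\rho}(d)_{\abs{W}}^{-1}$, which, unlike $d^{-n}$, genuinely depends on $\rho$: already for $n=2$ it equals $\tfrac{1}{d(d-1)}$ for $\rho=1_2$ but $\tfrac{1}{d^2}$ for $\rho=0_2$. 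Consequently the $\rho$-sum no longer reduces to the pure M\"obius identity and the ``same argument'' does not go through verbatim; likewise, your alternative of substituting \eqref{eq.finitemomentcumulant} into \eqref{eq.cumulant.of.products} and ``collecting terms'' requires a nontrivial composition identity for the genus-expansion kernels that you do not supply. The standard repair is the Laguerre trick: the polynomial $L_d$ with $\coef{k}{L_d}=(d)_k/d^k$ satisfies $\kappa_n(L_d)=1$ for all $n\leq d$, and multiplying by its coefficients converts the falling-factorial prefactor into $d^{-n}$, whence $\kappa_\theta(q_d\boxtimes_d L_d)=m_\theta(q_d)$; applying your already-proved identity \eqref{eq.cumulant.of.products} to the pair $(p_d,\,q_d\boxtimes_d L_d)$ together with $m_n(p_d\boxtimes_d q_d)=\kappa_n(p_d\boxtimes_d q_d\boxtimes_d L_d)$ then yields \eqref{eq:multiplicativemomentcumulant}.
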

It was also shown in \cite[Lemma 3.3, Lemma 4.4]{arizmendi2023finite} that for any two sequences of numbers $(u_n)_{n \geq 1 }$ and $(v_n)_{n \geq 1 }$ we have 
\begin{equation*}
\sum_{\pi\in NC(n)} 
				u_\pi \,
				v_{Kr(\pi)}
=
\frac{(-1)^{n-1}}{(n-1)!} 
		\sum_{\substack{ \pi, \theta\in P(n) \\  \pi \lor \theta=1_n \\ \blocks{\pi}+\blocks{\theta}= n+1 }  } 
				\text{Möb}(0_n,\pi)
				\text{Möb}(0_n,\theta) 
				u_\pi
				v_\theta
\end{equation*}
and
\begin{equation*}
-	\frac{n}{2d}
	\sum_{\substack{ t+s = n \\ \sigma \in S_{NC}(t,s) } } 
		\frac{u_{\sigma} \, v_{Kr_{t,s}(\sigma)}}{ts}
=
\frac{(-1)^{n-1}}{(n-1)!} 
		\sum_{\substack{ \pi, \theta\in P(n) \\  \pi \lor \theta=1_n \\ \blocks{\pi}+\blocks{\theta}= n}  } 
				\text{Möb}(0_n,\pi)
				\text{Möb}(0_n,\theta) 
				u_\pi
				v_\theta 
\end{equation*}
for any $n \geq 1$. 
And it turns out that all the partitions $\pi,\theta \in P(n)$ with $\pi \lor \theta = 1_n$ and $\abs{\pi} + \abs{\theta} > n + 1$ yield a net zero contribution to both \eqref{eq.cumulant.of.products} and \eqref{eq:multiplicativemomentcumulant}. 
Thus, as a consequence of Theorem \ref{thm.cumulant.of.products} we obtain this. 
\begin{corollary}\label{cor:truncated_expasions}
For any $p_d,q_d \in \pols_d(\rr) $ and  $1 \leq n \leq d$, we have that 
\begin{equation}\label{eqn:truncated_expasion_multiplicative_cumulants}
\kappa_n (p_d\boxtimes_d q_d)
	\ \ 	= 
			\sum_{\pi\in NC(n)} 
				\kappa_\pi(p_d) \,
				\kappa_{Kr(\pi)}(q_d) 
		 \	- \
			\frac{n}{2d}
			\sum_{\substack{ t+s = n \\ \sigma \in S_{NC}(t,s) } } 
				\frac{\kappa_{\sigma}(p_d) \, \kappa_{Kr_{t,s}(\sigma)}(q_d)}{ts} 
		\ \	+ \ \
			o\left(1/d\right)
\end{equation}
and
\begin{equation}\label{eqn:truncated_expasion_multiplicative_moments}
m_n(p_d \boxtimes_d q_d ) 
	\ \ 	= 
			\sum_{\pi\in NC(n)} 
				\kappa_\pi(p_d) \,
				m_{Kr(\pi)}(q_d) 
		\	- \
			\frac{n}{2d}
			\sum_{\substack{ t+s = n \\ \sigma \in S_{NC}(t,s) } } 
				\frac{\kappa_{\sigma}(p_d) \, m_{Kr_{t,s}(\sigma)}(q_d)}{ts} 
		\ \ + \ \
			o\left(1/d\right)  .
\end{equation}
\end{corollary}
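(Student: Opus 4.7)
The plan is to derive the corollary directly from Theorem \ref{thm.cumulant.of.products} by grouping the partitions in \eqref{eq.cumulant.of.products} and \eqref{eq:multiplicativemomentcumulant} according to the value of $\blocks{\pi}+\blocks{\theta}$, and then invoking the two identities cited right after Theorem \ref{thm.cumulant.of.products} to rewrite each group in a more transparent combinatorial form.

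I would first recall the classical inequality $\blocks{\pi\vee\theta} \geq \blocks{\pi}+\blocks{\theta}-n$, which in our setting $\pi\vee\theta=1_n$ yields $\blocks{\pi}+\blocks{\theta}\leq n+1$. Consequently, the factor $d^{\blocks{\pi}+\blocks{\theta}-n-1}$ in \eqref{eq.cumulant.of.products} takes only one of the following shapes: $d^{0}$ when $\blocks{\pi}+\blocks{\theta}=n+1$, $d^{-1}$ when $\blocks{\pi}+\blocks{\theta}=n$, and $d^{-k}$ for some $k\geq 2$ otherwise. I would split the right-hand side of \eqref{eq.cumulant.of.products} as
\begin{equation*}
\kappa_n(p_d\boxtimes_d q_d) \ = \ A_n(p_d,q_d) \ + \ \tfrac{1}{d}\,B_n(p_d,q_d) \ + \ E_n(p_d,q_d),
\end{equation*}
where $A_n$ collects the $|\pi|+|\theta|=n+1$ terms, $B_n$ collects (after clearing $d^{-1}$) the $|\pi|+|\theta|=n$ terms, and $E_n$ collects the rest.

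Next, setting $u_n=\kappa_n(p_d)$ and $v_n=\kappa_n(q_d)$, I would apply the first identity cited after Theorem \ref{thm.cumulant.of.products} to recognize $A_n(p_d,q_d)$ as the Kreweras-type sum $\sum_{\pi\in NC(n)}\kappa_\pi(p_d)\,\kappa_{Kr(\pi)}(q_d)$. Analogously, the second identity turns $B_n(p_d,q_d)$ into $-\tfrac{n}{2}\sum_{t+s=n,\sigma\in S_{NC}(t,s)}\tfrac{\kappa_\sigma(p_d)\,\kappa_{Kr_{t,s}(\sigma)}(q_d)}{ts}$, which after dividing by $d$ yields exactly the second summand in \eqref{eqn:truncated_expasion_multiplicative_cumulants}. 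For the remainder $E_n(p_d,q_d)$, since for each fixed $n$ there are only finitely many partitions $\pi,\theta\in P(n)$, and every such term in $E_n$ comes multiplied by $d^{\blocks{\pi}+\blocks{\theta}-n-1}$ with exponent at most $-2$, we get $E_n(p_d,q_d)=O(d^{-2})=o(d^{-1})$, establishing \eqref{eqn:truncated_expasion_multiplicative_cumulants}. The derivation of \eqref{eqn:truncated_expasion_multiplicative_moments} is identical, starting instead from \eqref{eq:multiplicativemomentcumulant} and taking $u_n=\kappa_n(p_d)$, $v_n=m_n(q_d)$.

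I do not anticipate a genuine obstacle: once the two identities after Theorem \ref{thm.cumulant.of.products} are available, the proof is a bookkeeping exercise on the power of $d$. The only subtle point worth spelling out is justifying the classification $\blocks{\pi}+\blocks{\theta}\in\{n+1,n,\text{less}\}$ and verifying that the ``less'' terms are genuinely $o(d^{-1})$; but both are immediate since the sums are finite for each $n$ and the finite-free cumulants $\kappa_\pi(p_d),\kappa_\theta(q_d)$ do not involve any positive power of $d$ beyond what is already written down.
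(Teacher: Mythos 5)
Your proposal is correct and follows essentially the same route as the paper: the paper likewise obtains the corollary by splitting the sums in \eqref{eq.cumulant.of.products} and \eqref{eq:multiplicativemomentcumulant} according to the power $d^{\blocks{\pi}+\blocks{\theta}-n-1}$, identifying the $\blocks{\pi}+\blocks{\theta}=n+1$ and $\blocks{\pi}+\blocks{\theta}=n$ strata with the non-crossing and annular sums via the two identities quoted from \cite[Lemma 3.3, Lemma 4.4]{arizmendi2023finite}, and absorbing the remaining (finitely many, exponent $\leq -2$) terms into $o(1/d)$. Your explicit use of semimodularity of $P(n)$ to justify $\blocks{\pi}+\blocks{\theta}\leq n+1$ when $\pi\vee\theta=1_n$ is a clean way to make precise the degree bookkeeping that the paper only asserts.
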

While not explicitly stated, the previous two relations are part of \cite[Proof of Theorem 1.3]{arizmendi2023finite}. 
Additionally, since finite-free multiplicative convolution is commutative, then \eqref{eqn:truncated_expasion_multiplicative_moments} is equivalent to 
\begin{equation}\label{eqn:truncated_expasion_multiplicative_moments_ver2}
m_n(p_d \boxtimes_d q_d ) 
	\ \ 	= 
			\sum_{\pi\in NC(n)} 
				m_\pi(p_d) \,
				\kappa_{Kr(\pi)}(q_d) 
		\	- \
			\frac{n}{2d}
			\sum_{\substack{ t+s = n \\ \sigma \in S_{NC}(t,s) } } 
				\frac{m_{\sigma}(p_d) \, \kappa_{Kr_{t,s}(\sigma)}(q_d)}{ts} 
		\ \	+ \ \
			o\left(1/d\right) .
\end{equation}
Moreover, letting $q_d = (x-1)^d$, we have that $p_d \boxtimes_d q_d = p_d$ for any $p_d  \in \pols_d(\rr)$ and  $m_n(q_d)=1$ for $1 \leq n \leq d$, 
so \eqref{eqn:truncated_expasion_multiplicative_moments} yield the following. 
\begin{corollary}[Terms of order $\Theta(1/d)$, {\cite[Theorem 1.3]{arizmendi2023finite}}] \label{thm:moment_cumulant_approx_2}
For any $p_d \in \pols_d(\rr) $ and  $1 \leq n \leq d$, we have that 
\begin{equation}\label{eqn:moment_cumulant_approx_2}
m_n(p_d)  
\ \ \, =  
		\sum_{\pi\in NC(n)} \kappa_\pi(p_d)  
	\ -  \ 
		\frac{n}{2 d} 
		\sum_{ \substack{t,s=n\\  \sigma \in S_{NC}(t,s)}} 
		\frac{\kappa_\sigma(p_d)}{ts} 
	\ \ + \ \ 
		o(d^{-1})
\end{equation}
\end{corollary}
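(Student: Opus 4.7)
The plan is to derive the stated expansion as a direct specialization of equation \eqref{eqn:truncated_expasion_multiplicative_moments} from Corollary \ref{cor:truncated_expasions}, taking $q_d(x) = (x-1)^d$ as suggested in the text immediately preceding the statement.

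The first step is to verify that $(x-1)^d$ acts as the identity for $\boxtimes_d$. Expanding via the binomial theorem, $(x-1)^d = \sum_{k=0}^{d} x^{d-k}(-1)^{k}\binom{d}{k}$, so $\coef{k}{q_d} = 1$ for every $0 \leq k \leq d$. Plugging into the coefficient-level definition of finite-free multiplicative convolution, the $k$-th coefficient of $p_d \boxtimes_d q_d$ equals $\coef{k}{p_d}\cdot\coef{k}{q_d} = \coef{k}{p_d}$, hence $p_d \boxtimes_d q_d = p_d$ and in particular $m_n(p_d \boxtimes_d q_d) = m_n(p_d)$.

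The second step is to evaluate the combinatorial factors coming from $q_d$. Since every root of $q_d$ equals $1$, we have $m_n(q_d) = 1$ for every $n \geq 0$, and therefore
\begin{equation*}
m_\theta(q_d) = \prod_{V \in \theta} m_{\abs{V}}(q_d) = 1
\end{equation*}
for every partition $\theta$. In particular, $m_{Kr(\pi)}(q_d) = 1$ for each $\pi \in NC(n)$, and $m_{Kr_{t,s}(\sigma)}(q_d) = 1$ for each $\sigma \in S_{NC}(t,s)$ with $t+s=n$.

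Substituting both observations into \eqref{eqn:truncated_expasion_multiplicative_moments} applied to $p_d \boxtimes_d q_d$ collapses the $q_d$-factors to $1$ and replaces the left-hand side by $m_n(p_d)$, yielding the claimed formula. There is essentially no obstacle here, as all the combinatorial work has already been carried out in establishing Corollary \ref{cor:truncated_expasions}; the statement follows from that corollary by picking the multiplicative identity $(x-1)^d$.
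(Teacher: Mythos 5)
Your proposal is correct and follows exactly the paper's own route: the text immediately preceding the corollary derives it by setting $q_d=(x-1)^d$, observing that this polynomial is the identity for $\boxtimes_d$ with all moments equal to $1$, and specializing \eqref{eqn:truncated_expasion_multiplicative_moments}. Your additional verification that all coefficients $\coef{k}{q_d}$ equal $1$ (hence $p_d\boxtimes_d q_d=p_d$) is a correct and slightly more explicit justification of the same step.
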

From \eqref{eqn:moment_cumulant_approx_2}, it readily follows that convergence in moments is equivalent to convergence in cumulants in finite-free probability. 
\begin{proposition}[{\cite[Theorem 5.5]{arizmendi2018cumulants}}]\label{prop:convergence_cumulants}
Let $\mfp=(p_d)_{d\geq 1}$ be a sequence of polynomials with $p_d \in \pols_d(\rr) $ for every $d \geq 1$ and let $\mu\in \MM(\R)$ be a probability distribution with finite moments $(m_n(\mu))_{n \geq 1}$ of all orders. 
Then $\mfp=(p_d)_{d\geq 1}$ has limiting distribution $\mu$, in the sense that $ m_n(p_d) \to m_n(\mu)$ as $d \to \infty$ for all $n \geq 0$, if and only if 
the finite-free cumulants of $\mfp=(p_d)_{d\geq 1}$ converge to the free cumulants of $\mu$, i.e.,  $\kappa_n(p_d) \to r_n(\mu)$ as $d \to \infty$ for all $n \geq 0$.
\end{proposition}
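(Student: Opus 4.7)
The plan is to derive both directions from the truncated moment-cumulant expansion \eqref{eqn:moment_cumulant_approx_2} of Corollary \ref{thm:moment_cumulant_approx_2}. This formula exhibits $m_n(p_d)$ as the sum $\sum_{\pi\in NC(n)}\kappa_\pi(p_d)$ plus a $O(1/d)$ correction involving the annular non-crossing permutations. Compared with the free moment-cumulant formula $m_n(\mu)=\sum_{\pi\in NC(n)} r_\pi(\mu)$, this immediately suggests that moments and cumulants converge together.

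For the forward direction, I would assume that $\kappa_n(p_d)\to r_n(\mu)$ for every $n$. Then each $\kappa_\pi(p_d)$ is a finite product of convergent sequences, hence converges to $r_\pi(\mu)$, and similarly each $\kappa_\sigma(p_d)$ with $\sigma\in S_{NC}(t,s)$ stays bounded in $d$. Thus, in \eqref{eqn:moment_cumulant_approx_2}, the main sum converges to $\sum_{\pi\in NC(n)}r_\pi(\mu)=m_n(\mu)$, while the second term is $O(1/d)$ and the last term is $o(1/d)$. Passing to the limit gives $m_n(p_d)\to m_n(\mu)$.

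For the reverse direction, I would proceed by induction on $n$. The base case $n=1$ is immediate since $\kappa_1(p_d)=m_1(p_d)$ (the set $S_{NC}(t,s)$ is empty when $t+s=1$). For the inductive step, I would isolate the contribution of $\kappa_n(p_d)$ in \eqref{eqn:moment_cumulant_approx_2}: the single term from $\pi=1_n$ in the first sum, plus the contributions of those $\sigma\in S_{NC}(t,s)$ with $t+s=n$ that are an $n$-cycle (for which $\kappa_\sigma(p_d)=\kappa_n(p_d)$). This rearranges \eqref{eqn:moment_cumulant_approx_2} into
\begin{equation*}
\kappa_n(p_d)\left(1-\frac{C_n}{d}\right) \;=\; m_n(p_d)\;-\;A_n(p_d)\;-\;\frac{1}{d}B_n(p_d)\;+\;o(d^{-1}),
\end{equation*}
where $C_n$ is a constant independent of $d$, while $A_n(p_d)$ and $B_n(p_d)$ are polynomial expressions in $\kappa_1(p_d),\ldots,\kappa_{n-1}(p_d)$ only. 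By the inductive hypothesis, $A_n(p_d)$ and $B_n(p_d)$ converge, hence are bounded. For $d$ large enough, $1-C_n/d$ is bounded away from zero, so one can solve for $\kappa_n(p_d)$ and take the limit, yielding
\begin{equation*}
\kappa_n(p_d)\;\longrightarrow\; m_n(\mu)-\sum_{\pi\in NC(n),\,\pi\neq 1_n} r_\pi(\mu) \;=\; r_n(\mu),
\end{equation*}
where the last equality is the standard free cumulant-moment inversion.

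The main subtlety is the bookkeeping in the inductive step: one has to verify that every $\sigma\in S_{NC}(t,s)$ with $t+s=n$ either is the single $n$-cycle (contributing $\kappa_n(p_d)$ with an explicit numerical coefficient encoded in $C_n$) or has all cycles of length strictly less than $n$, so that its contribution involves only lower-order cumulants. This follows simply from the fact that a permutation on $n$ points with two or more cycles has every cycle of length less than $n$. Once this is settled, the rest is a clean induction.
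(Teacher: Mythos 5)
Your strategy is sound and matches what the paper itself indicates: the paper does not prove this proposition (it cites \cite[Theorem 5.5]{arizmendi2018cumulants}) and merely remarks that it ``readily follows'' from \eqref{eqn:moment_cumulant_approx_2}, which is exactly the formula you build on. Your forward direction is the standard argument and is fine. Two comments on the reverse direction. First, the induction can be avoided altogether: the exact cumulant--moment formula \eqref{eq.finitemomentcumulant} expresses $\kappa_n(p_d)$ directly as a fixed polynomial in $m_1(p_d),\dots,m_n(p_d)$ whose coefficients are powers $d^{\abs{\pi}+\abs{\theta}-n-1}$ with nonpositive exponent, hence bounded and convergent as $d\to\infty$; convergence of the moments then gives convergence of the cumulants in one step, with the limit identified as $r_n(\mu)$ by the free Möbius inversion. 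This is presumably the route of the cited reference and is shorter than your induction. Second, within your induction there is a small circularity to repair: the $o(d^{-1})$ remainder in \eqref{eqn:moment_cumulant_approx_2} is not an autonomous error term but a sum of the form $\sum d^{\abs{\pi}+\abs{\theta}-n-1}(\cdots)\kappa_\pi(p_d)$ over pairs with $\abs{\pi}+\abs{\theta}\leq n-1$, and some of these terms (those with $\pi=1_n$) contain $\kappa_n(p_d)$ itself with coefficients of order $d^{-2}$. You cannot declare them $o(d^{-1})$ before knowing $\kappa_n(p_d)$ is bounded, which is what you are trying to prove. The fix is immediate --- collect \emph{all} terms containing $\kappa_n(p_d)$ as a factor and observe that its total coefficient is $1+O(1/d)$, hence bounded away from zero for large $d$, while every other term involves only $\kappa_1(p_d),\dots,\kappa_{n-1}(p_d)$ with bounded coefficients --- but as written the step is not airtight. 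Your bookkeeping observation that any $\sigma\in S_{NC}(t,s)$ which is not a single $(t+s)$-cycle has all cycles of length strictly less than $n$ is correct and does what you need for the annular sum.
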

Furthermore, since cumulants $\kappa_n(\cdot)$ and $r_n(\cdot)$ linearize additive convolutions $\boxplus_d$ and $\boxplus$, respectively, we obtain that $\boxplus_d$ converges to $\boxplus$ as $d \to \infty$. 

\begin{proposition}[{\cite[Corollary 5.5]{arizmendi2018cumulants}}] \label{prop:finiteAsymptotics_additive}
Suppose $\mfp=(p_d)_{d\geq1}$ and $\mfq=(q_d)_{d\geq1}$ are sequences of polynomials with limiting distributions $\mu, \nu \in \MM(\R)$, respectively. 
Then the finite-free additive convolution $\mfp \boxplus \mfq := (p_d\boxplus_d q_d)_{d\geq1}$ has limiting distribution $\mu \boxplus \nu$. 
\end{proposition}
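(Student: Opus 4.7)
The plan is to combine three facts that have already been gathered in the preliminaries: the equivalence between convergence of moments and convergence of finite-free cumulants (Proposition \ref{prop:convergence_cumulants}), the linearization property of finite-free cumulants with respect to $\boxplus_d$ (Proposition 3.6 of \cite{arizmendi2018cumulants}, cited just above Equation \eqref{eq.finitemomentcumulant}), and the defining linearization property of free cumulants with respect to $\boxplus$. Since the hypothesis and the conclusion are both phrased at the level of moments, the strategy is to transfer the statement to the cumulant side, where additivity makes the argument immediate, and then transfer back.

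More precisely, first I would apply Proposition \ref{prop:convergence_cumulants} to each of the sequences $\mfp$ and $\mfq$: from the assumption that $\mfp$ and $\mfq$ have limiting distributions $\mu$ and $\nu$, respectively, we obtain
\begin{equation*}
\kappa_n(p_d) \, \longrightarrow \, r_n(\mu) \qquad \text{and} \qquad \kappa_n(q_d) \, \longrightarrow \, r_n(\nu) \qquad \text{as } d \to \infty,
\end{equation*}
for every $n \geq 1$. Next, I would invoke the linearity property $\kappa_n(p_d \boxplus_d q_d) = \kappa_n(p_d) + \kappa_n(q_d)$, valid for all $1 \leq n \leq d$, so that taking $d \to \infty$ (which is legitimate since for each fixed $n$ the equality holds as soon as $d \geq n$) yields
\begin{equation*}
\kappa_n(p_d \boxplus_d q_d) \, \longrightarrow \, r_n(\mu) + r_n(\nu) \qquad \text{as } d \to \infty.
\end{equation*}

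Finally, by the very definition of free additive convolution, the right-hand side equals $r_n(\mu \boxplus \nu)$. Hence the sequence $\mfp \boxplus \mfq = (p_d \boxplus_d q_d)_{d \geq 1}$ has finite-free cumulants converging to the free cumulants of $\mu \boxplus \nu$, and applying Proposition \ref{prop:convergence_cumulants} in the reverse direction gives the desired moment convergence to $\mu \boxplus \nu$.

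Since every ingredient is already recorded in the preliminaries, no step here presents a genuine obstacle; the only mild point worth flagging is the one of \emph{well-definedness of the limit}: the cumulant equality $\kappa_n(p_d \boxplus_d q_d) = \kappa_n(p_d) + \kappa_n(q_d)$ holds only in the range $n \leq d$, but this is harmless because for any fixed $n$ the tail of the sequence (in $d$) eventually satisfies $d \geq n$, which is all that is needed for the limit statement. No analytic input (e.g., determinacy of the moment problem) is required, since the conclusion is stated as convergence in moments, exactly mirroring the hypothesis.
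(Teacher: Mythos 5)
Your proof is correct and matches the argument the paper itself gives (immediately before the proposition, it notes that the result follows "since cumulants $\kappa_n(\cdot)$ and $r_n(\cdot)$ linearize additive convolutions $\boxplus_d$ and $\boxplus$" together with Proposition \ref{prop:convergence_cumulants}). You have simply spelled out the same cumulant-transfer argument in full detail, including the harmless range restriction $n \leq d$.
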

It follows from \eqref{eqn:truncated_expasion_multiplicative_cumulants}  that $\boxtimes_d$ converges to $\boxtimes$. 
\begin{proposition}[{\cite[Theorem 1.4]{arizmendi2023finite}}]
    \label{prop:finiteAsymptotics_multiplicative}
Suppose $\mfp=(p_d)_{d\geq1}$ and $\mfq=(q_d)_{d\geq1}$ are sequences of polynomials with limiting distributions $\mu \in \MM(\R)$ and $\nu \in  \MM(\R_{\geq 0})$, respectively. 
Then the finite-free multiplicative convolution $\mfp \boxtimes \mfq := (p_d\boxtimes_d q_d)_{d\geq1}$ has limiting distribution $\mu \boxtimes \nu$. 
\end{proposition}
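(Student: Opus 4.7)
The plan is to reduce the convergence of moments of $p_d \boxtimes_d q_d$ to the convergence of finite-free cumulants, and then invoke the truncated cumulant expansion from Corollary \ref{cor:truncated_expasions} together with the combinatorial definition of $\mu \boxtimes \nu$.

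First, by Proposition \ref{prop:convergence_cumulants}, the hypothesis that $\mfp$ and $\mfq$ have limiting distributions $\mu$ and $\nu$ is equivalent to
\begin{equation*}
\kappa_n(p_d) \to r_n(\mu)
\quad \text{and} \quad
\kappa_n(q_d) \to r_n(\nu)
\end{equation*}
for every $n\geq 1$. By the same proposition, to conclude that $\mfp \boxtimes \mfq$ has limiting distribution $\mu \boxtimes \nu$ it suffices to show that $\kappa_n(p_d \boxtimes_d q_d) \to r_n(\mu \boxtimes \nu)$ for every $n\geq 1$.

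Next, I would apply the truncated expansion \eqref{eqn:truncated_expasion_multiplicative_cumulants} from Corollary \ref{cor:truncated_expasions}: for every $n \geq 1$ and all $d\geq n$,
\begin{equation*}
\kappa_n(p_d \boxtimes_d q_d)
        =
                \sum_{\pi \in NC(n)} \kappa_\pi(p_d)\, \kappa_{Kr(\pi)}(q_d)
        \ - \
                \frac{n}{2d}
                \sum_{\substack{t+s=n \\ \sigma \in S_{NC}(t,s)}}
                        \frac{\kappa_\sigma(p_d)\, \kappa_{Kr_{t,s}(\sigma)}(q_d)}{ts}
        \ + \
                o(1/d) .
\end{equation*}
Each finite sum involves only finitely many non-crossing partitions (or annular permutations) on $[n]$, so I can pass to the limit term by term. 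Since every $\kappa_k(p_d)$ and $\kappa_k(q_d)$ with $k\leq n$ converges, and in particular is bounded in $d$, the annular sum divided by $d$ tends to $0$, and the $o(1/d)$ remainder also vanishes. Thus
\begin{equation*}
\lim_{d \to \infty}\kappa_n(p_d \boxtimes_d q_d)
=
\sum_{\pi \in NC(n)} r_\pi(\mu)\, r_{Kr(\pi)}(\nu) ,
\end{equation*}
which by the combinatorial definition of free multiplicative convolution in Section \ref{ssubsec:free_convolutions} equals $r_n(\mu \boxtimes \nu)$. Applying Proposition \ref{prop:convergence_cumulants} once more yields $m_n(p_d \boxtimes_d q_d) \to m_n(\mu \boxtimes \nu)$ for every $n$, as required.

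The argument is essentially a routine passage to the limit, so there is no real obstacle; the only point demanding minor care is ensuring that the $o(1/d)$ error in Corollary \ref{cor:truncated_expasions}, which a priori involves the finite-free cumulants of $p_d$ and $q_d$, is uniformly controlled in $d$. This is guaranteed because the implicit terms come from the pairs $(\pi,\theta)$ with $|\pi|+|\theta| < n$ in the formula \eqref{eq.cumulant.of.products}, for which the factor $d^{|\pi|+|\theta|-n-1}$ is already $o(1/d)$, and the cumulants appearing alongside it are convergent and hence bounded.
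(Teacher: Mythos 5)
Your proposal is correct and follows essentially the same route the paper takes: the paper derives this proposition directly from the truncated cumulant expansion \eqref{eqn:truncated_expasion_multiplicative_cumulants} combined with the equivalence of moment and cumulant convergence in Proposition \ref{prop:convergence_cumulants}, which is exactly your reduction. Your added remark on why the $o(1/d)$ remainder is uniformly controlled (the surviving pairs have $\blocks{\pi}+\blocks{\theta}\leq n-1$, so the prefactor $d^{\blocks{\pi}+\blocks{\theta}-n-1}$ is already $O(d^{-2})$ against bounded cumulants) is a correct and worthwhile elaboration of a point the paper leaves implicit.
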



\section{Combinatorial relations for infinitesimal moments and finite free cumulants}
\label{sec:combinatorial.description}

In this section, we calculate the infinitesimal moments arising from the finite-free additive and multiplicative convolution of polynomials with given infinitesimal distribution.

\subsection{Finite Cumulant Fluctuations}

To start, we show that a sequence of polynomials with limiting root distribution has infinitesimal moments of all orders if and only if all finite-free cumulants have an expansion in $1/d$ up to a correction terms of order $o(1/d)$. 
\begin{lemma}\label{lem:infinitesimal_single_polynomial}
For  a sequence of polynomials $\mfp = (p_d)_{d\geq1}$ with limiting distribution $\mu \in \MM(\R)$,  
the following two conditions are equivalent:   

\begin{enumerate}[(1)]
\item The sequence $\mfp=(p_d)_{d\geq1}$ has infinitesimal moments of all orders, 
namely, there exists a sequence  $(m_n'(\mfp))_{n\geq 1}$ that does not depend on $d$ such that 
\begin{equation}\label{eqn:hypothesis_cumulants_ver_2} 
m_n(p_d)=m_n(\mu)+ \frac{1}{d}  \, m_n' (\mfp) + o(1/d) 
\quad \forall \ n \geq 1.
\end{equation}

\item There exists a sequence $(\widehat{r}_n(\mfp))_{n\geq 1}$ that does not depend on $d$ such that 
\begin{equation}\label{eqn:hypothesis_cumulants_ver_1}
\kappa_n (p_d)
= r_n (\mu)+ \frac{1}{d} \, \widehat{r}_n (\mfp) + o(1/d) 
\quad \forall \ d \geq n \geq 1.
\end{equation}
\end{enumerate}
Moreover, if any of two conditions above holds, then the sequences $(m_n'(\mfp))_{n\geq 1}$ and $(\widehat{r}_n(\mfp))_{n\geq 1}$ determine each other through the relation 
\begin{equation}\label{eqn:mom.cum.inf}
m'_n (\mfp) \ \ \ =\sum_{\pi \in NC(n)} 
            \sum_{V \in \pi } 
                    \widehat{r}_{\abs{V}} (\mfp) \cdot 
                    r_{\pi \setminus V}( \mu )
\ \  -  \ \ 
	\frac{n}{2}  \sum_{\substack{ t+s=n\\ \sigma \in S_{NC}(t,s)  } }
					\frac{  r_{\sigma} (\mu) } {ts}   
\quad \forall \ n \geq 1.
\end{equation}
\end{lemma}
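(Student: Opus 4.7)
The plan is to run both implications directly out of the finite moment-cumulant expansion \eqref{eqn:moment_cumulant_approx_2} in Corollary \ref{thm:moment_cumulant_approx_2}. That expansion already encodes the two scales I care about: the $d^{0}$ part is the classical moment-cumulant formula, and the $d^{-1}$ correction is the combinatorial object that produces \eqref{eqn:mom.cum.inf}. So the entire argument reduces to substituting the hypothesized $1/d$ expansions into \eqref{eqn:moment_cumulant_approx_2} and tracking which terms survive modulo $o(1/d)$.

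For the implication $(2) \Rightarrow (1)$, I would substitute $\kappa_k(p_d) = r_k(\mu) + \tfrac{1}{d}\widehat{r}_k(\mfp) + o(1/d)$ into \eqref{eqn:moment_cumulant_approx_2}. Expanding the product $\kappa_\pi(p_d) = \prod_{V \in \pi}\kappa_{|V|}(p_d)$ to first order in $1/d$ gives
$$\kappa_\pi(p_d) = r_\pi(\mu) + \frac{1}{d}\sum_{V \in \pi} \widehat{r}_{|V|}(\mfp) \, r_{\pi \setminus V}(\mu) + o(1/d).$$
For the second sum in \eqref{eqn:moment_cumulant_approx_2}, the prefactor $1/d$ means only the leading term $\kappa_\sigma(p_d) = r_\sigma(\mu) + O(1/d)$ matters. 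Collecting the $d^{0}$ contributions recovers $m_n(\mu) = \sum_{\pi \in NC(n)} r_\pi(\mu)$ via Proposition \ref{prop:basic.funct.relation}, while collecting the $d^{-1}$ contributions gives exactly the right-hand side of \eqref{eqn:mom.cum.inf}, proving (1) and the formula.

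For the converse $(1) \Rightarrow (2)$, I would use strong induction on $n$ to build the sequence $(\widehat{r}_n(\mfp))_{n \geq 1}$. Isolating the $\pi = 1_n$ term, rewrite \eqref{eqn:moment_cumulant_approx_2} as
$$\kappa_n(p_d) = m_n(p_d) - \sum_{\substack{\pi \in NC(n) \\ \pi \neq 1_n}} \kappa_\pi(p_d) + \frac{n}{2d}\sum_{\substack{t+s=n \\ \sigma \in S_{NC}(t,s)}} \frac{\kappa_\sigma(p_d)}{ts} + o(1/d).$$
Every block of every $\pi \neq 1_n$ and every cycle of every $\sigma \in S_{NC}(t,s)$ has size strictly less than $n$, so the inductive hypothesis supplies the full $1/d$ expansion of $\kappa_\pi(p_d)$ and $\kappa_\sigma(p_d)$. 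Using the hypothesis $m_n(p_d) = m_n(\mu) + \tfrac{1}{d}m_n'(\mfp) + o(1/d)$, the right-hand side takes the form $A + \tfrac{1}{d}B + o(1/d)$ for explicit $A,B$. The coefficient $A$ equals $r_n(\mu)$ by the same moment-cumulant identity applied to $\mu$, and defining $\widehat{r}_n(\mfp) := B$ gives \eqref{eqn:hypothesis_cumulants_ver_1} at step $n$; rearranging the definition recovers \eqref{eqn:mom.cum.inf} after noting that the $\pi = 1_n$ contribution to the double sum in \eqref{eqn:mom.cum.inf} is exactly $\widehat{r}_n(\mfp)$ itself.

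The only delicate point is bookkeeping of error terms: one must verify that products of $O(1)$ and $o(1/d)$ factors really do aggregate to $o(1/d)$ and that no hidden $O(1/d^2)$ contribution is promoted to order $1/d$ through the finite-size combinatorics. This is automatic because the second sum in \eqref{eqn:moment_cumulant_approx_2} already carries an explicit $1/d$ prefactor, and the number of partitions and permutations involved does not depend on $d$. Once this is in place, both directions and the identity \eqref{eqn:mom.cum.inf} follow simultaneously from the same expansion.
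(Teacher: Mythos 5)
Your proposal is correct and follows essentially the same route as the paper's proof: the forward direction substitutes the cumulant expansion into \eqref{eqn:moment_cumulant_approx_2} and collects orders, and the converse isolates the $\pi=1_n$ term and runs an induction that defines $\widehat{r}_n(\mfp)$ as the resulting $1/d$ coefficient. One small inaccuracy: a permutation $\sigma\in S_{NC}(t,s)$ with $t+s=n$ can consist of a single cycle of length $n$, so the inductive hypothesis does not by itself expand $\kappa_\sigma(p_d)$; however, since that sum carries an explicit $1/d$ prefactor, only the $o(1)$ convergence $\kappa_j(p_d)\to r_j(\mu)$ from Proposition \ref{prop:convergence_cumulants} is needed there, exactly as your closing remark about the prefactor already suggests.
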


\begin{proof}
$\textit{(2)} \Rightarrow \textit{(1)}$.
For any integer $j\geq 1$, we have that  $\kappa^{}_j(p_d) = r_j(\mu) + o(1)$ since $\kappa^{}_j(p_d)  \to r_j(\mu)$ as $d \to \infty$, due to Proposition \ref{prop:convergence_cumulants}. 
Combing this with \eqref{eqn:hypothesis_cumulants_ver_1}, and applying both of them to the relation \eqref{eqn:moment_cumulant_approx_2}, we obtain that 
\begin{align*}
m_n(p_d) 
\,   &= \sum_{\pi \in NC(n)}\prod_{V\in\pi} \left( r_{|V|} (\mu)+ \frac{1}{d} \,  \widehat{r}_{|V|}(\mfp) + o(1/d) \right) 
\,\, - \,\, 
			\frac{n}{2d}  \sum_{\substack{ t+s=n\\ \sigma \in S_{NC}(t,s)  } }
					\frac{ 1 } {ts} \, \prod_{V \in \sigma} \Big( r_{\abs{V}} (\mu) + o(1) \Big)
\,\, + \,\, 
o(1/d)\\
&= \sum_{\pi \in NC(n)} r_\pi(\mu)  
\,\,+ \,\,
     \frac{1}{d} 
            \sum_{\pi \in NC(n)} 
            \sum_{V \in \pi } 
                    \widehat{r}_{\abs{V}}(\mfp) \cdot 
                    r_{\pi \setminus V}( \mu )
\,\, - \,\,
	\frac{n}{2d}  \sum_{\substack{ t+s=n\\ \sigma \in S_{NC}(t,s)  } }
					\frac{  r_{\sigma} (\mu) } {ts} 
\,\, + \,\,
    o(1/d). 
\end{align*}
However, due to the free moment-cumulant formula \eqref{eqn:moment-cumulant-combinatorial_intro}, the last line yields precisely  \eqref{eqn:hypothesis_cumulants_ver_2} with each $m'_n(\mfp)$ given by \eqref{eqn:mom.cum.inf}.  \\ 

$\textit{(1)} \Rightarrow \textit{(2)}$. 
We will prove that \eqref{eqn:hypothesis_cumulants_ver_1} holds by induction on $n$. 
For $n=1$, we have that $m_1(\mu) = r_1(\mu)$, so  \eqref{eqn:moment_cumulant_approx_2} and  \eqref{eqn:hypothesis_cumulants_ver_2} yield
\begin{equation*}
\kappa^{}_1(p_d) = m_1(p_d) + o(1/d^2) = r_1(\mu) + \frac{1}{d} \, m'_1 + o(1/d) . 
\end{equation*}
Hence, we can take $\widehat{r}_1 (\mfp) = m'_1 (\mfp)$. 
Assume now that there exist $\widehat{r}_1(\mfp) , \widehat{r}_2(\mfp), \ldots, \widehat{r}_{n-1}(\mfp)$ such that 
\begin{equation*}
\kappa_j (p_d)
= r_j (\mu)+ \frac{1}{d} \, \widehat{r}_j (\mfp) + o(1/d) \quad \text{for} \quad j=1,2,\dots,n-1.
\end{equation*}
Given partition $\pi \in NC(n)$, the product of finite-free cumulants $\kappa_\pi(p_d)=\prod_{V \in \pi} \kappa^{}_{\abs{V}}(p_d)$ contains the term $\kappa_n(p_d)$ as a factor only if $\pi = 1_n$. 
So, we can re-write \eqref{eqn:moment_cumulant_approx_2} as 
\begin{equation*}
\kappa^{}_n(p_d) \, =
			 \, m_n(p_d)
			\, - \,  
					\sum_{\substack{\pi \in NC(n)\\ \pi\neq 1_n}}\kappa^{}_\pi(p_d) 
			\, + \, 
			\frac{n}{2d}  \sum_{\substack{ t+s=n\\ \sigma \in S_{NC}(t,s) } }
					\frac{\kappa^{}_\sigma(p_d) } {ts}
	\,\,  + \,\, o(1/d) \, .
\end{equation*}
Moreover, our induction hypothesis implies that  
\begin{equation*}
\kappa_\pi(p_d) 
=
\prod_{V \in \pi} \left(   r_{\abs{V}} (\mu)+ \frac{1}{d} \, \widehat{r}_{\abs{V}}(\mfp) + o(1/d) \right)
=
 r_\pi(\mu) 
			\,\, + \,\,
    		 \frac{1}{d} 
            \sum_{V \in \pi } 
                    \widehat{r}_{\abs{V}}(\mfp) \cdot 
                    r_{\pi \setminus V}( \mu )
			\,\, +\,\, 
			o(1/d) 
\end{equation*}
whenever $1_n \neq \pi \in NC(n)$; additionally, since $\kappa^{}_j(p_d)  \to r_j(\mu)$ as $d \to \infty$  for any integer $j\geq 1$, meaning that  $\kappa^{}_j(p_d) = r_j(\mu) + o(1)$,  we obtain that 
\begin{equation*}
\frac{1}{d} \, \kappa^{}_\sigma(p_d) 
=
	\frac{1}{d} \, \prod_{V \in \sigma} \Big(  r_{\abs{V}}(\mu) + o(1)  \Big)
=
	\frac{1}{d} \, r^{}_\sigma(\mu) \,\, +\,\, o(1/d) 
\end{equation*}
for every permutation $\sigma \in S_{NC}(t,s)$ with $t+s=n$. 
Thus, we have that  
\begin{equation*}
\kappa_n(p_d)  \, =
			\  \, m_n(p_d)
			\, - \,  
					\sum_{\substack{\pi \in NC(n)\\ \pi\neq 1_n}} r_\pi(\mu) 
			\,\, - \,\,
    		 \frac{1}{d} 
            \sum_{\substack{\pi \in NC(n)\\ \pi\neq 1_n}} 
            \sum_{V \in \pi } 
                    \widehat{r}_{\abs{V}}(\mfp) \cdot 
                    r_{\pi \setminus V}( \mu )
			\,\,  + \,\, 
					\frac{n}{2d}  \sum_{\substack{ t+s=n\\ \sigma \in S_{NC}(t,s) } } 
						\frac{ r_\sigma( \mu ) } {ts}
			\,\, +\,\, 
			o(1/d) .
\end{equation*}
But, the free moment-cumulant formula \eqref{eqn:moment-cumulant-combinatorial_intro} implies that 
\begin{equation*}  
r_n(\mu) -m_n(\mu)	\ \ = -  
 \sum_{\substack{\pi \in NC(n)\\ \pi\neq 1_n}} r_\pi(\mu) \, ,
\end{equation*}
and we know that $m_n(p_d)=m_n(\mu)+ \frac{1}{d}  \, m_n'(\mfp)+o(1/d)$, so we obtain  
\begin{equation*}
\kappa^{}_n(p_d)  =  r_n(\mu) 
			\,\, + \,\,
    		 \frac{1}{d} 
\left( m'_n(\mfp) -
            \sum_{\substack{\pi \in NC(n)\\ \pi\neq 1_n}} 
            \sum_{V \in \pi } 
                    \widehat{r}_{\abs{V}}(\mfp)\cdot 
                    r_{\pi \setminus V}( \mu )
			\,\,  + \,\, 
				\frac{n}{2}  \sum_{\substack{ t+s=n\\ \sigma \in S_{NC}(t,s) } } 
						\frac{ r_\sigma( \mu ) } {ts}
\right)
			\,\,  + \,\, 
			o(1/d) \, . 
\end{equation*}
We can then take $\widehat{r}_n$ as the expression inside the parentheses.
\end{proof}

\begin{definition} \label{def:fluctiation.cumulants}
For a sequence of polynomials $\mfp=(p_d)_{d\geq1}$ with infinitesimal distribution $(\mu,\mu')$,
the elements of the sequence $(\widehat{r}_n(\mfp))_{n\geq 1}$  from Lemma \ref{lem:infinitesimal_single_polynomial} are called the \emph{(finite-free) cumulant fluctuations of} $\mfp$. 
\end{definition}
By Proposition \ref{prop:infinitesimal-moment-cumulant-functional}, the relation \eqref{eqn:mom.cum.inf} can be inverted to solve for the cumulant fluctuations.
\begin{corollary}\label{cor:cum_mom_inf_h}
In the context of Lemma \ref{lem:infinitesimal_single_polynomial}, letting $
h_n (\mu) =
 \frac{n}{2} \sum_{ \substack{s+t=n\\ \sigma \in S_{NC}(s,t)}} \frac{r_\sigma(\mu)}{st}$, the relation \eqref{eqn:mom.cum.inf} is equivalent to  
\begin{equation}\label{eqn:cum.mom.inf}
\widehat{r}_n (\mfp)
    =
    \sum_{\pi \in NC(n)}  
            \text{Möb}(\pi,1_n) 
            \sum_{V \in \pi } 
                    (m'_{\abs{V}} (\mfp) +h_{\abs{V}} (\mu) )  
					\cdot 
                    m_{\pi \setminus V}( \mu ) 
\quad \forall \ n \geq 1.
\end{equation}
\end{corollary}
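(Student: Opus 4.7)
The plan is to recognize that relation \eqref{eqn:mom.cum.inf} is, after a trivial rearrangement, precisely condition \textit{(3')} of Proposition \ref{prop:infinitesimal-moment-cumulant-functional} applied to suitable auxiliary sequences, so the corollary follows from the equivalence \textit{(3')}~$\Leftrightarrow$~\textit{(4')} of that proposition.

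First I would move the annular-permutation term in \eqref{eqn:mom.cum.inf} to the left-hand side. Since by definition $h_n(\mu) = \frac{n}{2} \sum_{s+t=n,\, \sigma \in S_{NC}(s,t)} r_\sigma(\mu)/(st)$, the identity \eqref{eqn:mom.cum.inf} can be rewritten as
\[
m'_n(\mfp) + h_n(\mu) \;=\; \sum_{\pi \in NC(n)} \sum_{V \in \pi} \widehat{r}_{\abs{V}}(\mfp) \cdot r_{\pi \setminus V}(\mu) \qquad \text{for all } n \geq 1.
\]

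Next I would invoke Proposition \ref{prop:infinitesimal-moment-cumulant-functional} with the underlying sequences $(m_n(\mu))_{n\geq 1}$ and $(r_n(\mu))_{n\geq 1}$, which satisfy the free moment-cumulant relation of Proposition \ref{prop:basic.funct.relation} by the very definition of free cumulants, and with the auxiliary sequences playing the roles of ``infinitesimal moments'' and ``infinitesimal cumulants'' taken to be
\[
\widetilde{m}_n \,:=\, m'_n(\mfp) + h_n(\mu), \qquad \widetilde{r}_n \,:=\, \widehat{r}_n(\mfp).
\]
The displayed equation above is exactly condition \textit{(3')} of Proposition \ref{prop:infinitesimal-moment-cumulant-functional} for these sequences, and the equivalence \textit{(3')}~$\Leftrightarrow$~\textit{(4')} immediately yields
\[
\widehat{r}_n(\mfp) \;=\; \sum_{\pi \in NC(n)} \text{Möb}(\pi,1_n) \sum_{V \in \pi} \bigl( m'_{\abs{V}}(\mfp) + h_{\abs{V}}(\mu) \bigr) \cdot m_{\pi \setminus V}(\mu),
\]
which is exactly \eqref{eqn:cum.mom.inf}; the converse direction is obtained by reversing the same chain of equivalences.

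There is no substantive obstacle; the only point requiring a moment of care is the bookkeeping that identifies the shifted quantity $m'_n(\mfp) + h_n(\mu)$, rather than $m'_n(\mfp)$ itself, as the appropriate input to Proposition \ref{prop:infinitesimal-moment-cumulant-functional}, once the annular-permutation correction in \eqref{eqn:mom.cum.inf} is absorbed into the left-hand side.
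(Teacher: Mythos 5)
Your proof is correct and is exactly the argument the paper intends: the corollary is justified in the text by the single remark that Proposition \ref{prop:infinitesimal-moment-cumulant-functional} inverts \eqref{eqn:mom.cum.inf}, and your identification of $\widetilde{m}_n = m'_n(\mfp) + h_n(\mu)$, $\widetilde{r}_n = \widehat{r}_n(\mfp)$ as the inputs to the equivalence \textit{(3')}~$\Leftrightarrow$~\textit{(4')} is precisely the intended bookkeeping. The only cosmetic discrepancy is that the corollary as printed writes $\text{Möb}(\pi,1_n)$ while \textit{(4')} uses $\text{Möb}_{NC}(\pi,1_n)$; since the sum runs over $NC(n)$, the non-crossing Möbius function is the correct one, and your proof implicitly uses it.
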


\subsection{Infinitesimal moments for additive convolution}

The previous lemma allows us to compute directly the infinitesimal moments for finite-free additive convolution. 

\begin{theorem}\label{thm:infinitesimal_additive_finite_free cumulants}
Suppose $\mfp=(p_d)_{d\geq1}$ and $\mfq=(q_d)_{d\geq1}$ are  two sequences of polynomials with infinitesimal distributions $(\mu,\mu')$ and $(\nu,\nu')$, respectively. 
Then, the finite-free additive convolution $\mfp \boxplus \mfq = (p_d\boxplus_d q_d)_{d\geq1}$ has  infinitesimal distribution $(\rho,\rho')$ where $\rho=\mu\boxplus\nu$ and the infinitesimal moments $m_n(\rho')$ are given by
\begin{equation}\label{eqn:mom.cum.inf.additive}
m_n(\rho') \ \ = 
		\sum_{\pi\in NC(n)} 
		\sum_{V \in\pi} 
				\left[ \widehat{r}_{\abs{V}}(\mfp) + \widehat{r}_{\abs{V}}(\mfq)\right]
					 r_{\pi\setminus V}(\mu \boxplus \nu) 
		\ \ - \ \ 
		\frac{n}{2} 
		\sum_{ \substack{s+t=n\\ \sigma \in S_{NC}(s,t)}} 	\frac{r_\sigma(\mu\boxplus\nu)}{st} 
\qquad \forall \ n \geq 1.
\end{equation}

\end{theorem}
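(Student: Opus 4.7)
The plan is to combine the linearization property of finite-free cumulants under $\boxplus_d$ with the characterization from Lemma \ref{lem:infinitesimal_single_polynomial}, and then feed the resulting cumulant expansion into the combinatorial moment-cumulant formula \eqref{eqn:mom.cum.inf}.

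First, I would invoke the direction $\textit{(1)} \Rightarrow \textit{(2)}$ of Lemma \ref{lem:infinitesimal_single_polynomial} applied separately to $\mfp$ and $\mfq$. This produces cumulant fluctuations $(\widehat{r}_n(\mfp))_{n\geq 1}$ and $(\widehat{r}_n(\mfq))_{n\geq 1}$ with
\begin{equation*}
\kappa_n(p_d) = r_n(\mu) + \tfrac{1}{d}\,\widehat{r}_n(\mfp) + o(1/d),
\qquad
\kappa_n(q_d) = r_n(\nu) + \tfrac{1}{d}\,\widehat{r}_n(\mfq) + o(1/d).
\end{equation*}
Adding these two expansions and using the linearity $\kappa_n(p_d\boxplus_d q_d) = \kappa_n(p_d) + \kappa_n(q_d)$ from \cite[Proposition 3.6]{arizmendi2018cumulants}, together with the fact that free cumulants linearize $\boxplus$, i.e.\ $r_n(\mu) + r_n(\nu) = r_n(\mu\boxplus\nu)$, yields
\begin{equation*}
\kappa_n(p_d\boxplus_d q_d) = r_n(\mu\boxplus\nu) + \tfrac{1}{d}\bigl[\widehat{r}_n(\mfp) + \widehat{r}_n(\mfq)\bigr] + o(1/d).
\end{equation*}

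Next, since Proposition \ref{prop:finiteAsymptotics_additive} guarantees that $\mfp\boxplus\mfq$ has limiting distribution $\rho = \mu\boxplus\nu$, I can apply the converse direction $\textit{(2)} \Rightarrow \textit{(1)}$ of Lemma \ref{lem:infinitesimal_single_polynomial} to the sequence $\mfp\boxplus\mfq$ with the cumulant-fluctuation sequence $\widehat{r}_n(\mfp\boxplus\mfq) := \widehat{r}_n(\mfp) + \widehat{r}_n(\mfq)$. This simultaneously proves that $\mfp\boxplus\mfq$ possesses infinitesimal moments of all orders (so the infinitesimal distribution $(\rho,\rho')$ is well-defined) and establishes the linearity of the cumulant fluctuations under $\boxplus$.

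Finally, I would substitute these cumulant fluctuations into formula \eqref{eqn:mom.cum.inf} of the same lemma applied to $\mfp\boxplus\mfq$, obtaining
\begin{equation*}
m_n(\rho') = \sum_{\pi\in NC(n)}\sum_{V\in\pi} \widehat{r}_{\abs{V}}(\mfp\boxplus\mfq)\cdot r_{\pi\setminus V}(\mu\boxplus\nu) \ -\ \frac{n}{2}\sum_{\substack{s+t=n\\ \sigma\in S_{NC}(s,t)}}\frac{r_\sigma(\mu\boxplus\nu)}{st},
\end{equation*}
which, after splitting the first sum using $\widehat{r}_{\abs{V}}(\mfp\boxplus\mfq) = \widehat{r}_{\abs{V}}(\mfp) + \widehat{r}_{\abs{V}}(\mfq)$, is exactly \eqref{eqn:mom.cum.inf.additive}. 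I do not anticipate a serious obstacle here: the only subtlety is bookkeeping the $o(1/d)$ terms carefully when adding the two expansions, but Lemma \ref{lem:infinitesimal_single_polynomial} was designed precisely to package this step cleanly, so the proof reduces to assembling ingredients already at hand.
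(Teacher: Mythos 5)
Your proposal is correct and follows essentially the same route as the paper's proof: extract the cumulant fluctuations of $\mfp$ and $\mfq$ via Lemma \ref{lem:infinitesimal_single_polynomial}, add them using the linearization of $\kappa_n$ under $\boxplus_d$ and of $r_n$ under $\boxplus$, and then apply the converse direction of the same lemma to $\mfp\boxplus\mfq$ to read off \eqref{eqn:mom.cum.inf.additive}. The only difference is cosmetic: you spell out which implication of the lemma is used at each stage, whereas the paper simply cites the lemma.
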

\begin{proof}
%
Lemma \ref {lem:infinitesimal_single_polynomial} implies that there exist sequences $(\widehat{r}_n(\mfp))_{n\geq 1}$ and $(\widehat{r}_n(\mfq))_{n\geq 1}$ such that 
\begin{equation*}
\kappa_n(p_d)= r_n(\mu)+ \frac{1}{d}  \, \widehat{r}_n (\mfp)+o(1/d) \quad \text{and} \quad 
\kappa_n(q_d)= r_n(\nu)+ \frac{1}{d}  \, \widehat{r}_n (\mfq)+o(1/d) 
\qquad \forall \ n \geq 1.
\end{equation*}
Moreover, the free cumulants $\kappa_n(\, \cdot \,)$ and $r_n(\, \cdot \,)$ linearize the additive convolutions $p_d \boxplus_d q_d$ and $\mu \boxplus \nu$, respectively, and hence adding $\kappa_n(p_d)$ and $\kappa_n(p_d)$ yields
\begin{equation*}
\kappa_n(p_d \boxplus_d q_d ) 
= r_n(\mu \boxplus \nu)+ \frac{1}{d}  \, \Big( \widehat{r}_n (\mfp) + \widehat{r}_n (\mfq) \Big) + o(1/d) 
\qquad \forall \ n \geq 1.
\end{equation*}
Now, the sequence $\mfp \boxplus \mfq = (p_d\boxplus_d q_d)_{d\geq1}$ has limiting distribution $\mu \boxplus \nu$ since finite-free additive convolution $\boxplus_d$ converges to free additive convolution $\boxplus$. 
Therefore, a direct application of Lemma \ref {lem:infinitesimal_single_polynomial} yields that $\mfp \boxplus \mfq$ has infinitesimal moments given by \eqref{eqn:mom.cum.inf.additive}. 
\end{proof}

\subsection{Cumulant fluctuations for  multiplicative convolution}

For the finite-free multiplicative convolution, if we show that $\mfp \boxtimes \mfq = (p_d\boxtimes_d q_d)_{d\geq1}$ has cumulant fluctuations whenever  $\mfp=(p_d)_{d\geq1}$ and $\mfq=(q_d)_{d\geq1}$ have them, 
then Lemma \ref{lem:infinitesimal_single_polynomial} ---and the fact that finite-free multiplicative convolution $\boxtimes_d$ converges to free multiplicative convolution $\boxtimes$--- implies that $\mfp \boxtimes \mfq$ has infinitesimal moments given by 
\begin{equation}\label{eqn:infinitesimal_moments_ffc_product}
m'_n(\mfp \boxtimes \mfq)  
		\ \ = 
				\sum_{\pi\in NC(n)} 
				\sum_{V \in\pi} 
					\widehat{r}_{\abs{V}} ( \mfp \boxtimes \mfq ) 
					\cdot
					r_{\pi\setminus V}( \mu \boxtimes \nu ) 
	\quad -    
				\sum_{ \substack{t+s=n\\ \sigma \in S_{NC}(t,s)}} 				\frac{r_\sigma(\mu\boxtimes\nu)}{st}.
\end{equation}
This is the content of our next theorem, which, in particular, provides a formula for $\widehat{r}_{n} ( \mfp \boxtimes \mfq )$. 
\begin{theorem}\label{thm:infinitesimal_multiplicative_finite_free}
Suppose $\mfp=(p_d)_{d\geq1}$ and $\mfq=(q_d)_{d\geq1}$ are two sequences of polynomials with infinitesimal distributions   $(\mu,\mu')$ and $(\nu,\nu')$, respectively. 
Then, the finite-free cumulants $\kappa_n (p_d \boxtimes_d q_d)$ satisfy  the relation  
 \begin{equation}
\kappa_n (p_d \boxtimes_d q_d)
=r_n(\mu\boxtimes \nu)+ \frac{1}{d} \, \widehat r_n (\mfp \boxtimes \mfq)+o(1/d) 
\quad \forall \ d \geq n \geq 1
\end{equation}
where
\begin{align}\label{eqn:rhats_multiplicative}
\widehat r_n(\mfp \boxtimes \mfq) 
	\ \ =  & 
			\sum_{\pi\in NC(n)}\left(
					\sum_{V\in\pi} 
						\widehat{r}_{|V|}(\mfp) r_{\pi\backslash V}(\mu)
						\cdot 
						r_{Kr(\pi)}(\nu) 
		+
			\sum_{W\in Kr(\pi)}
					r_\pi(\mu)  
					\cdot
					\widehat{r}_{|W|}(\mfq) r_{Kr(\pi)\backslash W}(\nu)
			\right)
\nonumber \\ &  -
			\frac{n}{2} 
			\sum_{\substack{ t+s = n \\ \sigma \in S_{NC}(t,s) } } 
				\frac{r_{\sigma}(\mu) \, r_{Kr_{t,s}(\sigma)}(\nu)}{ts} 
\end{align}
Consequently, the finite-free multiplicative convolution $\mfp \boxtimes \mfq = (p_d\boxtimes_d q_d)_{d\geq1}$ has infinitesimal moments given by \eqref{eqn:infinitesimal_moments_ffc_product}. 
\end{theorem}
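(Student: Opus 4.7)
The plan is to start from the truncated expansion for finite-free multiplicative cumulants given in equation \eqref{eqn:truncated_expasion_multiplicative_cumulants} of Corollary \ref{cor:truncated_expasions}, namely
\begin{equation*}
\kappa_n (p_d\boxtimes_d q_d)
=
\sum_{\pi\in NC(n)} \kappa_\pi(p_d)\,\kappa_{Kr(\pi)}(q_d)
\,-\,
\frac{n}{2d}
\sum_{\substack{ t+s = n \\ \sigma \in S_{NC}(t,s) } }
\frac{\kappa_{\sigma}(p_d)\,\kappa_{Kr_{t,s}(\sigma)}(q_d)}{ts}
\,+\,o(1/d),
\end{equation*}
and substitute into it the asymptotic expansions for $\kappa_j(p_d)$ and $\kappa_j(q_d)$ coming from Lemma \ref{lem:infinitesimal_single_polynomial}, that is, $\kappa_j(p_d)=r_j(\mu)+\tfrac{1}{d}\widehat{r}_j(\mfp)+o(1/d)$ and analogously for $q_d$.

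First, I would expand the product $\kappa_\pi(p_d)\,\kappa_{Kr(\pi)}(q_d)$ to order $1/d$. Since $\kappa_\pi(p_d)=\prod_{V\in\pi}(r_{|V|}(\mu)+\tfrac{1}{d}\widehat{r}_{|V|}(\mfp)+o(1/d))$, the standard differentiation-of-a-product expansion yields
\begin{equation*}
\kappa_\pi(p_d)=r_\pi(\mu)+\tfrac{1}{d}\sum_{V\in\pi}\widehat{r}_{|V|}(\mfp)\,r_{\pi\setminus V}(\mu)+o(1/d),
\end{equation*}
and analogously for $\kappa_{Kr(\pi)}(q_d)$. Multiplying these two expansions and discarding $o(1/d)$ terms gives, for each $\pi\in NC(n)$,
\begin{equation*}
\kappa_\pi(p_d)\,\kappa_{Kr(\pi)}(q_d)
=
r_\pi(\mu)\,r_{Kr(\pi)}(\nu)
+\tfrac{1}{d}\!\left[\sum_{V\in\pi}\widehat{r}_{|V|}(\mfp)\,r_{\pi\setminus V}(\mu)\,r_{Kr(\pi)}(\nu)
+r_\pi(\mu)\!\sum_{W\in Kr(\pi)}\widehat{r}_{|W|}(\mfq)\,r_{Kr(\pi)\setminus W}(\nu)\right]+o(1/d).
\end{equation*}
Summing over $\pi\in NC(n)$, the order-$1$ contribution is precisely $\sum_{\pi\in NC(n)}r_\pi(\mu)\,r_{Kr(\pi)}(\nu)=r_n(\mu\boxtimes\nu)$ by the combinatorial definition of $\boxtimes$, and the order-$1/d$ contribution recovers the first bracket of \eqref{eqn:rhats_multiplicative}.

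For the second sum in the expansion of $\kappa_n(p_d\boxtimes_d q_d)$, the prefactor $1/d$ is already present, so only the leading-order value of $\kappa_\sigma(p_d)\,\kappa_{Kr_{t,s}(\sigma)}(q_d)$ is needed. Using that $\kappa_j(p_d)\to r_j(\mu)$ and $\kappa_j(q_d)\to r_j(\nu)$ (Proposition \ref{prop:convergence_cumulants}), the product $\kappa_\sigma(p_d)\,\kappa_{Kr_{t,s}(\sigma)}(q_d)$ equals $r_\sigma(\mu)\,r_{Kr_{t,s}(\sigma)}(\nu)+o(1)$. This yields precisely the annular term in \eqref{eqn:rhats_multiplicative}. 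Combining everything produces the claimed expansion $\kappa_n(p_d\boxtimes_d q_d)=r_n(\mu\boxtimes\nu)+\tfrac{1}{d}\widehat{r}_n(\mfp\boxtimes\mfq)+o(1/d)$.

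Finally, since $\mfp\boxtimes\mfq$ has limiting distribution $\mu\boxtimes\nu$ by Proposition \ref{prop:finiteAsymptotics_multiplicative}, and we have just verified the cumulant-side expansion, Lemma \ref{lem:infinitesimal_single_polynomial} directly translates this to the statement that $\mfp\boxtimes\mfq$ has infinitesimal moments of all orders satisfying \eqref{eqn:infinitesimal_moments_ffc_product}. The main obstacle here is bookkeeping rather than conceptual: one must carefully track which terms contribute at order $1/d$ versus $o(1/d)$, in particular using that the second (annular) sum already carries a $1/d$ factor so only zeroth-order substitutions are permitted there, while the first (non-crossing) sum requires the full first-order expansion.
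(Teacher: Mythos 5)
Your proposal is correct and follows essentially the same route as the paper: both start from the truncated expansion \eqref{eqn:truncated_expasion_multiplicative_cumulants} of Corollary \ref{cor:truncated_expasions}, expand each product $\kappa_\pi(p_d)\,\kappa_{Kr(\pi)}(q_d)$ to order $1/d$ using the cumulant expansions from Lemma \ref{lem:infinitesimal_single_polynomial}, keep only the leading order in the annular term (which already carries the $1/d$ prefactor), and identify $\sum_{\pi}r_\pi(\mu)r_{Kr(\pi)}(\nu)=r_n(\mu\boxtimes\nu)$. The concluding passage to the infinitesimal moments via Lemma \ref{lem:infinitesimal_single_polynomial} and Proposition \ref{prop:finiteAsymptotics_multiplicative} also matches the paper's argument.
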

\begin{proof}
By Lemma \ref{lem:infinitesimal_single_polynomial}, our hypothesis are equivalent to the existence of probability distributions $\mu$ and $\nu$ as well as sequences $(\widehat{r}_n(\mfp))_{n\geq 1}$ and $(\widehat{r}_n(\mfq))_{n\geq 1}$ such that 
\begin{equation*}
\kappa_n(p_d)= r_n(\mu)+ \frac{1}{d}  \, \widehat{r}_n (\mfp)+o(1/d) \quad \text{and} \quad 
\kappa_n(q_d)= r_n(\nu)+ \frac{1}{d}  \, \widehat{r}_n (\mfq)+o(1/d) 
\quad \forall \ d \geq n \geq 1. 
\end{equation*} 
Now, we know from \eqref{eq.cumulant.of.products} from Corollary \ref{cor:truncated_expasions} that
\begin{equation*}
\kappa_n (p_d\boxtimes_d q_d)
 \ \	= 
			\sum_{\pi\in NC(n)} 
				\kappa_\pi(p_d) \,
				\kappa_{Kr(\pi)}(q_d) 
			-
			\frac{n}{2d}
			\sum_{\substack{ t+s = n \\ \sigma \in S_{NC}(t,s) } } 
				\frac{\kappa_{\sigma}(p_d) \, \kappa_{Kr_{t,s}(\sigma)}(q_d)}{ts} 
			+
			o\left(1/d\right)
\end{equation*}
The conclusion follows from expanding each product $\kappa_\pi(p_d) \kappa_{Kr(\pi)}(q_d) $ as  
\begin{align*}
& \kappa_\pi(p_d) \, \kappa_{Kr(\pi)}(q_d)
		=
			\prod_{V \in \pi} 
				\left( r_{\abs{V}}(\mu)+\frac{1}{d}\, \widehat{r}_{\abs{V}}(\mfp) \right)
            \cdot
			\prod_{W \in Kr(\pi)} 
				\left( r_{\abs{W}}(\nu)+\frac{1}{d}\, \widehat{r}_{\abs{W}}(\mfq) \right)
			\ \ + \ \
			o(1/d)
\\
&=
		r_\pi(\mu) \, r_{Kr(\pi)}(\nu)
    \ \ + \ \ 
		\frac{1}{d}
		\sum_{V\in\pi} 
			\widehat{r}_{|V|}(\mfp)r_{\pi\backslash V}(\mu)
			\cdot 
			r_{Kr(\pi)}(\nu)   
	\ \ + \ \  
		\frac{1}{d}
		\sum_{W\in Kr(\pi)}
			r_\pi(\mu)
			\cdot
			\widehat{r}_{|W|}(\mfq) 
			r_{Kr(\pi) \backslash W}(\nu)   
	\ \ + \ \  
			o(1/d),
\end{align*}
and similarly for each product $\kappa_{\sigma}(p_d) \, \kappa_{Kr_{t,s}(\sigma)}(q_d)$, and the fact that $r_n(\mu\boxtimes \nu) = \sum_{\pi\in NC(n)} r_\pi(\mu) \, r_{Kr(\pi)}(\nu)$. 
\end{proof}
Using \eqref{eq:multiplicativemomentcumulant} instead of \eqref{eq.cumulant.of.products} and considering the relations
\begin{equation*}
m_n(p_d)= m_n(\mu)+ \frac{1}{d}  \, m'_n (\mfp)+o(1/d) \quad \text{and} \quad 
\kappa_n(q_d)= r_n(\nu)+ \frac{1}{d}  \, \widehat{r}_n (\mfq)+o(1/d) , 
\end{equation*}
similar arguments as in the previous proof yield another derivation for  $m'_n(\mfp \boxtimes \mfq)$. 
\begin{theorem}\label{prop:moments_for_multiplicative}
Suppose $\mfp=(p_d)_{d\geq1}$ and $\mfq=(q_d)_{d\geq1}$ are two sequences of polynomials with infinitesimal distributions   $(\mu,\mu')$ and $(\nu,\nu')$, respectively. 
Then, the moments $m_n (p_d \boxtimes_d q_d)$ satisfy  the relation  
 \begin{equation}
m_n (p_d \boxtimes_d q_d)
=m_n(\mu\boxtimes \nu)+ \frac{1}{d} \,  m'_n (\mfp \boxtimes \mfq)+o(1/d) 
\quad \forall \ n \geq 1
\end{equation}
where
\begin{align}\label{eqn:mprime_multiplicative}
m'_n(\mfp\boxtimes \mfq)
	\ \ = & 
			\sum_{\pi\in NC(n)}\left(
				\sum_{V\in\pi} 
					m'_{|V|}(\mfp) m_{\pi\backslash V}(\mu)
					\cdot 
					r_{Kr(\pi)}(\nu) 
				\ \ +
				\sum_{W\in Kr(\pi)}
					m_\pi(\mu)
					\cdot 
					\widehat{r}_{|W|}(\mfq) r_{Kr(\pi)\backslash W}(\nu) 
			\right) 
\nonumber
\\ & -
			\frac{n}{2}
			\sum_{\substack{ t+s = n \\ \sigma \in S_{NC}(t,s) } } 
				\frac{m_{\sigma}(\mu) \, r_{Kr_{t,s}(\sigma)}(\nu)}{ts}.
\end{align}
\end{theorem}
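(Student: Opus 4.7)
The plan is to mimic the proof of Theorem \ref{thm:infinitesimal_multiplicative_finite_free}, but instead of starting from the truncated cumulant expansion \eqref{eq.cumulant.of.products}, depart from the truncated moment expansion. Concretely, since finite-free multiplicative convolution is commutative, Corollary \ref{cor:truncated_expasions} yields
\begin{equation*}
m_n(p_d \boxtimes_d q_d )
	\ = \
			\sum_{\pi\in NC(n)}
				m_\pi(p_d) \,
				\kappa_{Kr(\pi)}(q_d)
		\ - \
			\frac{n}{2d}
			\sum_{\substack{ t+s = n \\ \sigma \in S_{NC}(t,s) } }
				\frac{m_{\sigma}(p_d) \, \kappa_{Kr_{t,s}(\sigma)}(q_d)}{ts}
		\ + \
			o(1/d) \, ,
\end{equation*}
which is \eqref{eqn:truncated_expasion_multiplicative_moments_ver2}.

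By Lemma \ref{lem:infinitesimal_single_polynomial}, the infinitesimal assumptions on $\mfp$ and $\mfq$ are equivalent to $m_j(p_d) = m_j(\mu) + \tfrac{1}{d} m'_j(\mfp) + o(1/d)$ and $\kappa_j(q_d) = r_j(\nu) + \tfrac{1}{d} \widehat{r}_j(\mfq) + o(1/d)$, for every $j \geq 1$. Plugging these into each factor, expanding the products block by block, and keeping only terms up to order $1/d$, one gets
\begin{equation*}
m_\pi(p_d) \, \kappa_{Kr(\pi)}(q_d)
	=
		m_\pi(\mu) \, r_{Kr(\pi)}(\nu)
	+
		\tfrac{1}{d} \!\!\sum_{V \in \pi} m'_{|V|}(\mfp) \, m_{\pi\setminus V}(\mu) \, r_{Kr(\pi)}(\nu)
	+
		\tfrac{1}{d} \!\!\sum_{W \in Kr(\pi)} m_\pi(\mu) \, \widehat{r}_{|W|}(\mfq) \, r_{Kr(\pi)\setminus W}(\nu)
	+ o(1/d),
\end{equation*}
and, analogously, $\tfrac{1}{d} m_\sigma(p_d) \, \kappa_{Kr_{t,s}(\sigma)}(q_d) = \tfrac{1}{d} m_\sigma(\mu) \, r_{Kr_{t,s}(\sigma)}(\nu) + o(1/d)$ for each annular permutation.

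To conclude, one identifies the zeroth-order part $\sum_{\pi \in NC(n)} m_\pi(\mu) \, r_{Kr(\pi)}(\nu)$ with $m_n(\mu \boxtimes \nu)$, which is precisely the standard moment-cumulant formula for multiplicative free convolution (obtained, for instance, by combining the moment-cumulant formula \eqref{eqn:moment-cumulant-combinatorial_intro} with the definition of $\boxtimes$). What remains at order $1/d$ is then exactly the right-hand side of \eqref{eqn:mprime_multiplicative}, yielding the claimed expression for $m'_n(\mfp \boxtimes \mfq)$. The main obstacle is simply careful bookkeeping in the block-by-block expansion and in verifying that all higher-order cross terms indeed fall into $o(1/d)$; no new combinatorial input beyond Corollary \ref{cor:truncated_expasions} and Lemma \ref{lem:infinitesimal_single_polynomial} is required.
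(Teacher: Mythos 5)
Your proposal is correct and follows essentially the same route as the paper: the authors also derive this by taking the truncated moment expansion \eqref{eqn:truncated_expasion_multiplicative_moments_ver2} (equivalently \eqref{eq:multiplicativemomentcumulant} via Corollary \ref{cor:truncated_expasions} and commutativity), substituting $m_j(p_d)=m_j(\mu)+\tfrac1d m'_j(\mfp)+o(1/d)$ and $\kappa_j(q_d)=r_j(\nu)+\tfrac1d\widehat r_j(\mfq)+o(1/d)$, and collecting the order-$1/d$ terms exactly as in the proof of Theorem \ref{thm:infinitesimal_multiplicative_finite_free}. Your identification of the zeroth-order term with $m_n(\mu\boxtimes\nu)$ via the standard moment-cumulant formula for $\boxtimes$ is also the intended step.
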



\section{Infinitesimal Cauchy transform in finite free probability}
\label{Functional}

In this section, we translate our results from Section \ref{sec:combinatorial.description} into functional relations of formal power series. 
For this, given a sequence of polynomials $\mfp=(p_d)_{d\geq1}$ with infinitesimal distribution $(\mu,\mu')$, besides the Cauchy transform and the $R$-transform \eqref{eqn:cauchy_and_r_transform}, and their infinitesimal counterparts \eqref{eqn:infinitesimal_cauchy_and_r}, we need to consider two additional formal power series: 
\begin{equation*}
\widehat{R}_{\mfp}(z) \, =  \sum_{n=1}^{\infty} \widehat{r}_{n}(\mfp) \, z^{n-1}  
\end{equation*}
determined by the cumulant fluctuations $(\widehat{r}_{n}(\mfp))_{n \geq 1}$ from Lemma \ref{lem:infinitesimal_single_polynomial} and Corollary \ref{cor:cum_mom_inf_h}; and 
\begin{equation*}
\gdos_{\mu}(z) \, = \sum_{n=2}^{\infty} h_n (\mu) \, z^{-n-1}  
\quad \text{with} \quad 
h_n (\mu) \ = \, \frac{n}{2} \sum_{\substack{t+s=n \\ \sigma \in S_{NC}(t,s)}} \frac{r_\sigma(\mu)} { ts } 
\end{equation*}
referred to as the \emph{$H$-transform} of $\mu$. 
The latter can also be expressed in terms of the Cauchy transform $G_\mu$ as follows. 

\begin{proposition} \label{prop:HG-functional}
Let $(m_n)_{n \geq 1}$ and $(r_n)_{n \geq 1}$ be any two sequences of numbers. 
Suppose that $G(z)  = z^{-1} + \sum_{n=1}^{\infty}  m_n \, z^{-n-1} $ and $K(z) = z^{-1} + \sum_{n=1}^{\infty} r_n \, z^{n-1} $ satisfy any of the four equivalent conditions from Proposition \ref{prop:basic.funct.relation}. 
Then, for any formal power series $\gdos(z) = \sum_{n=2}^{\infty}  h_n \, z^{-n-1}$, we have that 
\begin{equation}\label{eqn:HG-functional}
H = \frac{G'}{G}-\frac{G''}{2G'}
\qquad \text{if and only if} \qquad 
h_n \ = \, \frac{n}{2} \sum_{\substack{t+s=n \\ \pi \in S_{NC}(t,s)}} \frac{r_\pi} { ts } 
\quad \forall n \geq 2.
\end{equation}
\end{proposition}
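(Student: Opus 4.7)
My strategy is to encode the combinatorial formula as the diagonal restriction of a bivariate generating function, identify that bivariate object via the second-order Cauchy transform formula of Mingo--Speicher, and then differentiate at the diagonal to recover $H$. Concretely, I would define
\[
\widehat{F}(z,w) \,:=\, \sum_{t,s\geq 1} \frac{\alpha_{t,s}}{ts}\, z^{-t} w^{-s}, \qquad \alpha_{t,s} \,:=\, \sum_{\sigma\in S_{NC}(t,s)} r_\sigma,
\]
as a formal power series with no constant term. A direct rearrangement (using $-\tfrac{d}{dz}z^{-n} = n z^{-n-1}$) yields $\widehat{F}(z,z) = \sum_{n\geq 2} z^{-n} \sum_{t+s=n}\alpha_{t,s}/(ts)$ and hence $-\tfrac12 \tfrac{d}{dz}\widehat{F}(z,z) = \sum_{n\geq 2} h_n z^{-n-1}$ with $h_n$ as in \eqref{eqn:HG-functional}. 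The problem thereby reduces to computing $\widehat{F}(z,z)$ in closed form.

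To do this I would invoke the Mingo--Speicher identity for the second-order Cauchy transform of a distribution (see, e.g., \cite{mingo2017free}),
\[
\sum_{t,s\geq 1} \alpha_{t,s}\, z^{-t-1}w^{-s-1} \,=\, \frac{G'(z)G'(w)}{(G(z)-G(w))^2} - \frac{1}{(z-w)^2} \,=\, \partial_z \partial_w \log \frac{G(z)-G(w)}{z-w},
\]
which exhibits $\widehat{F}$ as a double antiderivative. Integrating and pinning down the two univariate integration constants via the normalization $\widehat{F}(z,w)\to 0$ as either variable tends to infinity gives
\[
\widehat{F}(z,w) \,=\, \log\!\left(\frac{-zw\,(G(z)-G(w))}{z-w}\right) - \log(zG(z)) - \log(wG(w)).
\]
Specializing at $w\to z$ via $(G(z)-G(w))/(z-w)\to G'(z)$ yields $\widehat{F}(z,z) = \log(-z^2 G'(z)) - 2\log(zG(z)) = \log(-G'(z)/G(z)^2)$. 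Differentiating then gives
\[
-\tfrac{1}{2}\tfrac{d}{dz}\log\!\left(\tfrac{-G'}{G^2}\right) = -\tfrac{1}{2}\!\left(\tfrac{G''}{G'} - 2\tfrac{G'}{G}\right) = \tfrac{G'}{G} - \tfrac{G''}{2G'} = H,
\]
which, combined with the first paragraph, matches $\sum_{n\geq 2} h_n z^{-n-1}$ coefficient by coefficient. Both directions of the equivalence then follow since $H$ is uniquely determined by either its functional expression or its coefficient sequence.

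The main obstacle is the bivariate identity in the second paragraph: it encodes precisely the Mingo--Nica combinatorics of $S_{NC}(t,s)$ and, although standard in second-order free probability, is not formally obvious from the statement of \eqref{eqn:HG-functional}. It must either be cited or proved inline via the $(t,s)$-moment--cumulant relation for the trivial second-order extension of $\mu$. Once that identity is in hand, the remaining steps -- integrating, normalizing, taking the diagonal limit, and differentiating -- are routine formal power-series manipulations.
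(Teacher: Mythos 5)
Your proof is correct, and it is essentially the argument that the paper delegates to its one-line citation of \cite[Eqs.\ (5.4) and (5.6)]{arizmendi2023finite}: the generating function of $\sum_{\sigma\in S_{NC}(t,s)} r_\sigma$ is identified via the second-order Cauchy transform formula for vanishing second-order cumulants, integrated, restricted to the diagonal, and differentiated. The one nonstandard ingredient, the bivariate identity $\sum_{t,s}\alpha_{t,s}z^{-t-1}w^{-s-1}=\partial_z\partial_w\log\bigl((G(z)-G(w))/(z-w)\bigr)$, is a purely formal consequence of the second-order moment--cumulant relation and holds for arbitrary sequences $(r_n)$, so citing \cite[Chapter 5]{mingo2017free} for it is legitimate; your normalization of the integration constants and the diagonal specialization $\widehat{F}(z,z)=\log(-G'/G^2)$ both check out.
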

\begin{proof}
This is an implicit consequence of \cite[s (5.4) and (5.6)]{arizmendi2023finite}
\end{proof}

When doing computations, and in the context of the previous proposition, it comes handy to consider the reciprocal formal power series $F(z)=1 / G(z) $, taken as a series in $z$ of the form $z+f_2z^2+f_3z^3+\cdots$ with formal derivative  $F'(z)=1+\tilde f_1z+\tilde f_2z^2+\cdots$. 
One can then show that the left-hand side of \eqref{eqn:HG-functional} can be expressed as 
\begin{equation} \label{eqn:logF}
H=-\frac{1}{2}\left( \ln(F') \right)'= \frac{-F''}{2F'}.
\end{equation}
The following lemma describes the relation between two $H$-transforms when the corresponding $G$ are related through composition. 
This relation will be used later on. 

\begin{lemma}
\label{lem:H.circ.function}
Let $G_1$ and $G_2$ be formal power series of the form $z^{-1}+a_1z^{-2}+\cdots$, and $J$ a formal power series of the form $1+ b_1z+b_2 z^2+\cdots$.
Put $H_i = \frac{G'_i}{G_i} - \frac{G''_i}{2G'_i} $ for $i=1,2$.  
If $G_1\circ J = G_2$, then
\begin{equation}
\label{eq:H.circ.function}
(H_1\circ J)J'=H_2+\frac{J''}{2J'}.
\end{equation}
\end{lemma}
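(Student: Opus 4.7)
The plan is to exploit the alternative representation of $H$ given in equation \eqref{eqn:logF}, namely $H_i = -\tfrac{1}{2}(\ln F_i')'$ where $F_i = 1/G_i$. This reduces the identity to a single application of the chain rule together with the basic log-derivative identity for a product.

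First I would observe that the assumption $G_1\circ J = G_2$ passes to reciprocals: $F_2 = 1/G_2 = 1/(G_1\circ J) = F_1 \circ J$, since taking reciprocals is done termwise in the composition. Differentiating by the chain rule gives $F_2' = (F_1'\circ J)\cdot J'$. Then, at the level of formal power series,
\begin{equation*}
\ln F_2' \;=\; \ln(F_1'\circ J) \;+\; \ln J' .
\end{equation*}
Differentiating once more and applying the chain rule to the first term on the right yields
\begin{equation*}
(\ln F_2')' \;=\; \bigl((\ln F_1')'\circ J\bigr)\cdot J' \;+\; (\ln J')' .
\end{equation*}

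Next I would multiply by $-\tfrac{1}{2}$ and use \eqref{eqn:logF} on both sides. On the left one obtains $H_2$; on the right, $-\tfrac{1}{2}(\ln F_1')'\circ J = H_1\circ J$, and $-\tfrac{1}{2}(\ln J')' = -J''/(2J')$. This gives
\begin{equation*}
H_2 \;=\; (H_1\circ J)\cdot J' \;-\; \frac{J''}{2J'},
\end{equation*}
which is exactly \eqref{eq:H.circ.function} after rearranging.

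The only genuine subtlety is making sure the compositions and logarithms are well-defined at the level of formal power series in the hybrid $(z,z^{-1})$ setting (since $G_i$ is a Laurent series starting at $z^{-1}$ while $J$ starts with the constant $1$); however, this is no obstacle because the hypothesis $G_1\circ J = G_2$ already presupposes that the composition makes sense, and the quantities $F_i' = 1 + O(z)$ and $J' = b_1 + O(z)$ (or their analogues in the ring in which we are working) are units whose formal logarithmic derivatives are well-defined. An entirely equivalent, if slightly more tedious, alternative is to compute $G_2'/G_2$ and $G_2''/(2G_2')$ directly from $G_2 = G_1\circ J$ via the chain rule and the product rule applied to $G_2'' = (G_1''\circ J)(J')^2 + (G_1'\circ J)J''$, and then simplify; both routes lead to the same identity, but the $\ln F'$ route is essentially a two-line argument.
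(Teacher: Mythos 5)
Your proof is correct and is essentially the paper's own argument: the paper works with the antiderivative $\mathcal{H}_i=-\tfrac12\ln\bigl(G_i'/(G_i)^2\bigr)$ of $H_i$, which coincides (up to a sign inside the logarithm that disappears on differentiation, since $G_i'/(G_i)^2=-F_i'$) with your use of \eqref{eqn:logF}, and then likewise takes the logarithm of the chain-rule identity for the first derivative and differentiates.
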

\begin{proof}
Let $\mathcal{H}_i = - \frac{1}{2} \ln \left( G'_i / (G_i)^2 \right)$, for $i=1,2$, and 
note that $\mathcal{H}'_i = H_i$. 
Thus, in particular, we have that 
$
(\mathcal{H}_1 \circ J)' =  (H_1\circ J) \cdot J'
$.  
Now, since $G_2 = G_1 \circ J$, and hence $ G'_2 = (G'_1\circ J) \cdot J'$, we also get that 
\begin{equation*}
\mathcal{H}_1 \circ J 
= 
		-\frac{1}{2} \ln \left(  \frac{(G'_1 \circ J) \cdot J' }{(G_1 \circ J)^2} \cdot \frac{1}{J'}\right) 
=
	  \mathcal{H}_2  + \frac{1}{2}\ln \left(J'\right) 
\end{equation*}
Differentiating both sides of the last equation, we obtain the desired relation. 
\end{proof}


\subsection{Cauchy transform and finite-free cumulants}

We first establish various functional relations in the case of a single sequence of polynomials $\mfp=(p_d)_{d\geq1}$ with infinitesimal distribution $(\mu,\mu')$,  
beginning with the functional equation that expresses the infinitesimal Cauchy transform $G_{\mu'}$ in terms of cumulant fluctuation transform $\widehat{R}_\mfp$, the Cauchy transform $G_\mu$, and the $\gdos$-transform $\gdos_\mu$.

\begin{lemma}\label{lemma:additive.delta.zero}
For any sequence of polynomials $\mfp=(p_d)_{d\geq1}$ with infinitesimal distribution $(\mu,\mu')$, the following relation holds 
\begin{equation} \label{eq. qis0}
G_{\mu'} = - (\widehat{R}_{\mfp} \circ G_{\mu} ) \cdot G'_{\mu} - \gdos_{\mu} .
\end{equation}
\end{lemma}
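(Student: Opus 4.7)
The plan is to translate the combinatorial identity \eqref{eqn:mom.cum.inf} from Lemma \ref{lem:infinitesimal_single_polynomial} into a functional relation by computing the formal power series generating function of its right-hand side term by term.

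First, I would observe that equation \eqref{eqn:mom.cum.inf} expresses $m'_n(\mfp)$ as a difference of two sums:
\begin{equation*}
m'_n(\mfp) \;=\; \underbrace{\sum_{\pi \in NC(n)} \sum_{V \in \pi} \widehat{r}_{|V|}(\mfp)\cdot r_{\pi \setminus V}(\mu)}_{=:\, \widetilde{m}'_n} \;-\; h_n(\mu),
\end{equation*}
where the second term, by the very definition of the $H$-transform, is the $n$-th coefficient generating $\gdos_\mu(z)$. So multiplying both sides by $z^{-n-1}$ and summing over $n \geq 1$ gives
\begin{equation*}
G_{\mu'}(z) \;=\; \sum_{n=1}^\infty \widetilde{m}'_n\, z^{-n-1} \;-\; \gdos_\mu(z),
\end{equation*}
and it only remains to identify the first series on the right as $-(\widehat{R}_{\mfp}\circ G_\mu)\cdot G'_\mu$.

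For this step, I would invoke Proposition \ref{prop:infinitesimal-moment-cumulant-functional} directly. Since the sequences $(m_n(\mu))_{n\geq 1}$ and $(r_n(\mu))_{n\geq 1}$ are related by the moment-cumulant formula, the hypothesis of Proposition \ref{prop:basic.funct.relation} holds for $G_\mu$ and $K_\mu$. Applying Proposition \ref{prop:infinitesimal-moment-cumulant-functional} with the identifications $r'_n := \widehat{r}_n(\mfp)$, $m'_n := \widetilde{m}'_n$, $r_n := r_n(\mu)$, $m_n := m_n(\mu)$, condition (3') holds by the very definition of $\widetilde{m}'_n$, and hence the equivalent condition (1') gives
\begin{equation*}
\sum_{n=1}^{\infty} \widetilde{m}'_n\, z^{-n-1} \;=\; -(\widehat{R}_\mfp \circ G_\mu)\cdot G'_\mu
\end{equation*}
at the level of formal power series. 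Substituting this back yields exactly \eqref{eq. qis0}.

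The approach is essentially bookkeeping, and I do not expect a serious obstacle; the main conceptual step is recognizing that the combinatorial expression from Lemma \ref{lem:infinitesimal_single_polynomial} has been prepared so that each of its two pieces matches exactly a previously developed generating-function identity (Proposition \ref{prop:infinitesimal-moment-cumulant-functional} for the first piece, and the defining series of $\gdos_\mu$ for the second). One small check worth noting is that both $G_{\mu'}$ and $\gdos_\mu$ start from $n=1$ and $n=2$ respectively, which is consistent since the first piece $\widetilde{m}'_1$ equals $\widehat{r}_1(\mfp)$ while $h_1(\mu) = 0$ (there is no pair $t+s=1$ with $t,s\geq 1$), so the indexing aligns correctly.
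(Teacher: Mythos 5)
Your proposal is correct and follows essentially the same route as the paper's own proof: both start from the combinatorial formula \eqref{eqn:mom.cum.inf} of Lemma \ref{lem:infinitesimal_single_polynomial}, identify the annular sum with the coefficients $h_n(\mu)$ defining $\gdos_\mu$, and recognize the non-crossing double sum as condition \textit{(3')} of Proposition \ref{prop:infinitesimal-moment-cumulant-functional} so that the equivalent condition \textit{(1')} yields $-(\widehat{R}_{\mfp}\circ G_{\mu})\cdot G'_{\mu}$. Your remark on the alignment of indices ($h_1(\mu)=0$) is a correct and worthwhile sanity check that the paper leaves implicit.
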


\begin{proof}
From Lemma \ref{lem:infinitesimal_single_polynomial}, we have that 
\begin{equation*}
m_n(p_d)=m_n(\mu)+ \frac{1}{d}  \, m_n'(\mfp)+o(1/d)
\end{equation*}
with each $m_n'(\mfp)$ given by 
\begin{equation*}
m'_n(\mfp) \ \ \ =\sum_{\pi \in NC(n)} 
            \sum_{B \in \pi } 
                    \widehat{r}_{\abs{B}} (\mfp) \cdot 
                    r_{\pi \setminus B}( \mu )
\quad - \ 
            \sum_{ \substack{ t+s = n \\ \sigma \in S_{NC}(t,s) }} 
                r_{\sigma } ( \mu )  \, .
\end{equation*}
But, from Proposition  \ref{prop:infinitesimal-moment-cumulant-functional} and Proposition \ref{prop:HG-functional}, we know that 
\begin{equation*}
- (\widehat{R}_{\mfp} \circ G_{\mu} (z)) \cdot G'_{\mu} (z) =\sum_{n=1}^{\infty} z^{-n-1} \sum_{\pi \in NC(n)} 
            \sum_{B \in \pi } 
                    \widehat{r}_{\abs{B}} (\mfp) \cdot 
                    r_{\pi \setminus B}( \mu)  \quad
\text{and} 
\quad
H_\mu(z )=\sum_{n=1}^{\infty} z^{-n-1} \sum_{ \substack{ t+s = n \\ \sigma \in S_{NC}(t,s) }} 
                r_{\sigma } ( \mu )  \, .
\end{equation*}
Consequently, we obtain that $\displaystyle G_{\mu'} =\sum_{n=1}^{\infty}  m'_n  z^{-n-1}= - (\widehat{R}_{\mfp} \circ G_{\mu} ) \cdot G'_{\mu} - H_{\mu}$.
\end{proof}

From Lemma \ref{lem:infinitesimal_single_polynomial} and Corollary \ref{cor:cum_mom_inf_h}, we know that the $1/d$ behavior of  a sequence of polynomials with limiting distribution can be retrieved by either cumulant fluctuations or  infinitesimal moments, with \eqref{eqn:mom.cum.inf} and \eqref{eqn:cum.mom.inf} describing how to go from one to the other explicitly. 
Our next lemma establishes this in terms of the related formal power series. 
\begin{lemma}
\label{lem:infinitesimal_single_polynomial_2}
For a sequence of polynomials $\mfp=(p_d)_{d\geq1}$ with infinitesimal distribution $(\mu,\mu')$, the following two functional relations are equivalent:
\begin{equation}
\label{eqs:GandR}
G_{\mu'}+\gdos_\mu = - (\widehat{R}_\mfp \circ G_{\mu}) \cdot G'_{\mu} %
\qquad \text{ and }\qquad 
\widehat{R}_{\mfp}= -\left(G_{\mu'}\circ K_\mu+\gdos_\mu \circ K_\mu\right) \cdot K'_\mu.
\end{equation}
As an immediate consequence, we obtain from \eqref{eqn:Mingo.relation} and \eqref{eqs:GandR} that $H_\mu=0$ if and only if $\widehat{R}_\mfp=\rinf_{\mu,\mu'}$.  
\end{lemma}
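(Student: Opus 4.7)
The equivalence of the two displays in \eqref{eqs:GandR} is a purely formal consequence of the mutual inversion $G_\mu \circ K_\mu = K_\mu \circ G_\mu = \mathrm{id}$ guaranteed by Proposition \ref{prop:basic.funct.relation}, in exactly the same spirit as the argument for $(1') \Leftrightarrow (2')$ in Proposition \ref{prop:infinitesimal-moment-cumulant-functional}. First I would assume the left equation in \eqref{eqs:GandR} holds and substitute $z$ by $K_\mu(z)$. Since $G_\mu(K_\mu(z))=z$, the right-hand side collapses to $-\widehat{R}_\mfp(z) \cdot G'_\mu(K_\mu(z))$. Differentiating the identity $G_\mu \circ K_\mu = \mathrm{id}$ yields $G'_\mu(K_\mu(z)) \cdot K'_\mu(z) = 1$, so multiplying both sides of the substituted equation by $K'_\mu(z)$ reproduces the right display of \eqref{eqs:GandR}. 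The converse is symmetric: starting from the right equation, substitute $z \mapsto G_\mu(z)$, use $K_\mu(G_\mu(z))=z$, and multiply by $G'_\mu(z)$, invoking the identity $K'_\mu(G_\mu(z)) \cdot G'_\mu(z) = 1$.

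For the stated consequence, I would combine the right display of \eqref{eqs:GandR} with the infinitesimal relation $\rinf_{\mu,\mu'} = -(G_{\mu'}\circ K_\mu) \cdot K'_\mu$ from \eqref{eqn:Mingo.relation} to rewrite
\begin{equation*}
\widehat{R}_\mfp \;=\; \rinf_{\mu,\mu'} \;-\; (\gdos_\mu \circ K_\mu) \cdot K'_\mu.
\end{equation*}
If $\gdos_\mu = 0$, the trailing term vanishes and hence $\widehat{R}_\mfp = \rinf_{\mu,\mu'}$. Conversely, if $\widehat{R}_\mfp = \rinf_{\mu,\mu'}$, then $(\gdos_\mu \circ K_\mu) \cdot K'_\mu = 0$; since $K'_\mu$ has leading term $-z^{-2}$ it is a unit in the relevant ring of formal Laurent series, forcing $\gdos_\mu \circ K_\mu = 0$, and then composing on the right with $G_\mu$ and using $K_\mu \circ G_\mu = \mathrm{id}$ yields $\gdos_\mu = 0$.

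The only care required is the standard one for formal power series manipulations: $K_\mu$ must admit a compositional inverse (which is $G_\mu$) and $K'_\mu$ must be a non-zero-divisor, both of which follow immediately from the shape $K_\mu(z) = z^{-1} + r_1 + r_2 z + \cdots$. Consequently I do not anticipate any substantive obstacle; the lemma is essentially a bookkeeping restatement of Lemma \ref{lemma:additive.delta.zero} under the substitution $z \leftrightarrow K_\mu(z)$, combined with a one-line comparison against \eqref{eqn:Mingo.relation}.
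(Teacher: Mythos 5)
Your argument is correct and is essentially the paper's: the paper proves \eqref{eqs:GandR} by applying Proposition \ref{prop:infinitesimal-moment-cumulant-functional} with $g=G_{\mu'}+\gdos_\mu$ and $r=\widehat{R}_\mfp$, and your substitution $z\mapsto K_\mu(z)$ together with $G'_\mu(K_\mu(z))\cdot K'_\mu(z)=1$ is exactly the mechanism behind that proposition's $(1')\Leftrightarrow(2')$ step, just unrolled inline. Your treatment of the final equivalence (using that $K'_\mu$ is a unit and $K_\mu$ has a compositional inverse) correctly fills in what the paper dismisses as immediate.
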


\begin{proof}
This follows directly from Proposition \ref{prop:infinitesimal-moment-cumulant-functional}. 
Indeed, letting $G=G_\mu$, $K=K_\mu$, $r=\widehat{R}_\mfp$ and $g=G_{\mu'}+H_\mu$, we have that  $G(K(z))=z$, and hence Proposition \ref{prop:infinitesimal-moment-cumulant-functional} gives  the equivalence between the relations 
\begin{equation*}
g(z) = -r(G(z)) \cdot G'(z) \qquad \text{ and }\qquad r(z) = -g(K(z)) \cdot K'(z) \, ,
\end{equation*}
which are exactly the desired formulas \eqref{eqs:GandR}. 
\end{proof}

Before applying the previous lemma to finite-free convolution, let us conclude this part by providing the relation between the transform $\widehat{R}_{\mfp}(z)$ and the infinitesimal $R$-transform  $\rinf_{\mu,\mu'}(z)$.
\begin{lemma}
\label{lem:infinitesimal_single_polynomial_3}
Under the conditions of Lemma \ref{lem:infinitesimal_single_polynomial_2}, the infinitesimal $R$-transform satisfies
\begin{equation}
\label{eqs:RhatRinf} 
\widehat{R}^{}_\mfp(z)
=
        \rinf_{\mu,\mu'} (z)-(H_\mu \circ K_\mu) \cdot  K'_\mu
=
        \rinf_{\mu,\mu'} (z)- \frac{K''_\mu(z)}{2K'_\mu(z)}-\frac{1}{z}.
\end{equation}
\end{lemma}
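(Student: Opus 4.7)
The plan is to obtain the first equality by directly comparing two functional expressions that we already have in hand, and then to verify the second equality through an elementary computation using derivatives of $G_\mu(K_\mu(z)) = z$.

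First, I would recall from \eqref{eqs:GandR} in Lemma \ref{lem:infinitesimal_single_polynomial_2} that
\begin{equation*}
\widehat{R}_{\mfp}= -\left(G_{\mu'}\circ K_\mu\right) \cdot K'_\mu \, - \, \left(\gdos_\mu \circ K_\mu\right) \cdot K'_\mu,
\end{equation*}
and from the second identity in \eqref{eqn:Mingo.relation} that $\rinf_{\mu,\mu'} = -(G_{\mu'}\circ K_\mu)\cdot K'_\mu$. Subtracting these two equalities immediately yields the first equality $\widehat{R}_{\mfp} = \rinf_{\mu,\mu'} - (H_\mu \circ K_\mu)\cdot K'_\mu$, without invoking any new combinatorial input.

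For the second equality, it suffices to prove the identity $(H_\mu \circ K_\mu) \cdot K'_\mu = \tfrac{1}{z} + \tfrac{K''_\mu(z)}{2K'_\mu(z)}$. Here I would differentiate $G_\mu(K_\mu(z)) = z$ once to get $G'_\mu(K_\mu(z))\cdot K'_\mu(z) = 1$, and differentiate again to get $G''_\mu(K_\mu(z))\cdot (K'_\mu(z))^2 + G'_\mu(K_\mu(z))\cdot K''_\mu(z) = 0$. Solving for $G'_\mu\circ K_\mu$ and $G''_\mu\circ K_\mu$ and plugging into Proposition \ref{prop:HG-functional}'s formula
\begin{equation*}
H_\mu \, = \, \frac{G'_\mu}{G_\mu}-\frac{G''_\mu}{2G'_\mu},
\end{equation*}
composed with $K_\mu$, gives
\begin{equation*}
(H_\mu \circ K_\mu)(z) \, = \, \frac{1}{z\, K'_\mu(z)} + \frac{K''_\mu(z)}{2(K'_\mu(z))^2},
\end{equation*}
so multiplying by $K'_\mu(z)$ produces the claimed expression.

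As an even shorter alternative, one could simply apply Lemma \ref{lem:H.circ.function} with $G_1 = G_\mu$, $G_2(z) = z^{-1}\cdot(\cdots)$ taken so that $G_1\circ J = G_2$ for $J = K_\mu$; however, since $G_\mu\circ K_\mu$ is the identity rather than of the form $z^{-1}+\cdots$, this requires a small adaptation, and the direct derivative computation above is the cleanest path. The only point that demands care is tracking the chain-rule signs and keeping the formal power series identities consistent; no genuine obstacle arises, as everything reduces to the moment-cumulant machinery already packaged in Proposition \ref{prop:infinitesimal-moment-cumulant-functional} and Proposition \ref{prop:HG-functional}.
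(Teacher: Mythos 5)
Your proof is correct. The first equality is obtained exactly as in the paper: you split $\widehat{R}_{\mfp}= -\left(G_{\mu'}\circ K_\mu\right)K'_\mu-\left(\gdos_\mu \circ K_\mu\right)K'_\mu$ via Lemma \ref{lem:infinitesimal_single_polynomial_2} and identify the first summand with $\rinf_{\mu,\mu'}$ through \eqref{eqn:Mingo.relation}. For the second equality the routes diverge slightly: the paper applies Lemma \ref{lem:H.circ.function} with $G_1=G_\mu$, $J=K_\mu$, $G_2(z)=z$, explicitly flagging that this requires ``a modified version'' since neither $G_2(z)=z$ nor $J=K_\mu$ has the shape assumed in that lemma's hypotheses, whereas you differentiate $G_\mu(K_\mu(z))=z$ twice, solve for $G'_\mu\circ K_\mu$ and $G''_\mu\circ K_\mu$, and substitute into $H_\mu = G'_\mu/G_\mu - G''_\mu/(2G'_\mu)$ directly; your computation $(H_\mu\circ K_\mu)\cdot K'_\mu = \tfrac{1}{z}+\tfrac{K''_\mu}{2K'_\mu}$ checks out. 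Your direct chain-rule argument is self-contained and avoids the informal appeal to an adapted version of Lemma \ref{lem:H.circ.function} --- a point you correctly identify yourself --- at the modest cost of redoing a calculation the paper delegates to that lemma; both arguments rest on the same identity $G'_\mu(K_\mu)\cdot K'_\mu=1$ and its derivative, so the mathematical content is the same.
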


\begin{proof}
From Lemma \ref{lem:infinitesimal_single_polynomial_2}, we know that $\widehat{R}_{\mfp}= -\left(G_{\mu'}\circ K_\mu \right) \cdot K'_\mu -\left(\gdos_\mu \circ K_\mu\right) \cdot K'_\mu $. 
We readily identify the term $-\left(G_{\mu'}\circ K_\mu \right) \cdot K'_\mu$ as the infinitesimal $R$-transform $\rinf_{\mu,\mu'}$ corresponding to the pair $(\mu,\mu')$, see \eqref{eqn:Mingo.relation}.  
To deal with the term $-\left(\gdos_\mu \circ K_\mu\right) \cdot K'_\mu$, we simply apply Lemma \ref{lem:H.circ.function} (in a modified version, composing with $1/z$).
First, we take $G_1(z)=G_\mu(z)$, $J(z)=K_\mu(z)$, and $G_2(z)=z$.
Then, we note that $G_2 = G_1 \circ J$ since $G_\mu(K_\mu(z))=z$. 
Thus, Lemma \ref{lem:H.circ.function} implies 
\begin{equation*}
-(H_1 \circ K_\mu ) \cdot K'_\mu = - H_2 - \frac{K''_\mu}{2K'_\mu}
\text{\qquad with \qquad }
H_i = \frac{G'_i}{G_i} - \frac{G''_i}{2G'_i}.
\end{equation*}
But, we have that $H_1(z)=\gdos_\mu(z)$ and $H_2(z)
= \tfrac{1}{z}$. 
So, we obtain the desired conclusion. 
\end{proof}

\subsection{Additive convolution}

\begin{theorem}\label{thm:infinitesimal_additive_finite_free}
Suppose that $\mfp=(p_d)_{d\geq1}$ and $\mfq=(q_d)_{d\geq1}$ are two sequences of polynomials with infinitesimal distributions   $(\mu,\mu')$ and $(\nu,\nu')$, respectively. 
Then the finite-free additive convolution $\mfp \boxplus \mfq = (p_d\boxplus_d q_d)_{d\geq1}$ has infinitesimal distribution $(\rho,\rho')$ where $\rho=\mu\boxplus\nu$ and $\rho'$ has Cauchy transform $G_{\rho'}$ given by 
\begin{equation}\label{eq:Gprime sum}
G_{\rho'} =  -  (\widehat{R}_{\mfp} \circ G_{\mu \boxplus \nu} ) \cdot G'_{\mu \boxplus \nu}
    -  (\widehat{R}_{\mfq} \circ G_{\mu \boxplus \nu} ) \cdot G'_{\mu \boxplus \nu}
-
    \gdos_{\mu \boxplus \nu} \, .
\end{equation}
\end{theorem}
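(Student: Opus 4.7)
The plan is to deduce this theorem by combining two ingredients already at our disposal: the combinatorial description of the infinitesimal moments of $\mfp \boxplus \mfq$ from Theorem \ref{thm:infinitesimal_additive_finite_free cumulants}, and the single-sequence functional relation from Lemma \ref{lemma:additive.delta.zero}. The essential observation is that additivity at the level of finite-free cumulants lifts to additivity at the level of cumulant fluctuations.

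First, I would recall from the proof of Theorem \ref{thm:infinitesimal_additive_finite_free cumulants} that, since $\kappa_n(\cdot)$ linearizes $\boxplus_d$, the truncated $1/d$ expansions
\begin{equation*}
\kappa_n(p_d) = r_n(\mu) + \tfrac{1}{d}\widehat{r}_n(\mfp) + o(1/d), \qquad \kappa_n(q_d) = r_n(\nu) + \tfrac{1}{d}\widehat{r}_n(\mfq) + o(1/d)
\end{equation*}
add up, yielding
\begin{equation*}
\kappa_n(p_d \boxplus_d q_d) = r_n(\mu \boxplus \nu) + \tfrac{1}{d}\bigl(\widehat{r}_n(\mfp) + \widehat{r}_n(\mfq)\bigr) + o(1/d).
\end{equation*}
By Lemma \ref{lem:infinitesimal_single_polynomial}, this simultaneously shows that $\mfp \boxplus \mfq$ admits cumulant fluctuations and that they satisfy $\widehat{r}_n(\mfp \boxplus \mfq) = \widehat{r}_n(\mfp) + \widehat{r}_n(\mfq)$, which at the level of formal power series translates to
\begin{equation*}
\widehat{R}_{\mfp \boxplus \mfq}(z) = \widehat{R}_{\mfp}(z) + \widehat{R}_{\mfq}(z).
\end{equation*}

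Next, since $\mfp \boxplus \mfq$ has infinitesimal distribution $(\rho,\rho')$ with $\rho = \mu \boxplus \nu$ (using Proposition \ref{prop:finiteAsymptotics_additive} for the limiting distribution and Theorem \ref{thm:infinitesimal_additive_finite_free cumulants} for the infinitesimal moments), I would apply Lemma \ref{lemma:additive.delta.zero} to the single sequence $\mfp \boxplus \mfq$, obtaining
\begin{equation*}
G_{\rho'} = -(\widehat{R}_{\mfp \boxplus \mfq} \circ G_{\rho}) \cdot G'_{\rho} - \gdos_{\rho}.
\end{equation*}
Substituting the additivity of $\widehat{R}$ established above and using $\rho = \mu \boxplus \nu$ produces exactly \eqref{eq:Gprime sum}.

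There is no real obstacle to overcome: the bulk of the work has been done in the preceding sections. The only point requiring a bit of care is confirming that the linearity of cumulant fluctuations really does follow from the existence of $1/d$ expansions for $\kappa_n(p_d)$ and $\kappa_n(q_d)$ separately, but this is immediate because finite-free cumulants are exactly additive (not merely additive up to $o(1/d)$) under $\boxplus_d$, so the correction terms simply add. As a sanity check, one could instead verify \eqref{eq:Gprime sum} directly at the coefficient level by unpacking the right-hand side using Proposition \ref{prop:infinitesimal-moment-cumulant-functional} and Proposition \ref{prop:HG-functional} and matching the result against the explicit formula \eqref{eqn:mom.cum.inf.additive} for $m_n(\rho')$; this would give an independent combinatorial derivation without invoking Lemma \ref{lemma:additive.delta.zero}.
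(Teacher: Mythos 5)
Your proposal is correct and follows essentially the same route as the paper's own proof: establish $\widehat{R}_{\mfp \boxplus \mfq} = \widehat{R}_{\mfp} + \widehat{R}_{\mfq}$ from the exact additivity of finite-free cumulants together with Lemma \ref{lem:infinitesimal_single_polynomial}, then apply Lemma \ref{lemma:additive.delta.zero} to the single sequence $\mfp \boxplus \mfq$. No gaps.
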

\begin{proof}
This is a consequence of Theorem \ref{thm:infinitesimal_additive_finite_free cumulants} and Lemma \ref{lemma:additive.delta.zero}. 
Indeed, From Lemma \ref{lem:infinitesimal_single_polynomial}, we have that our hypothesis are equivalent to the existence of measures $\mu$ and $\nu$ and sequences $(\widehat{r}_n(\mfp))_{n\geq 1}$ and $(\widehat{r}_n(\mfq))_{n\geq 1}$ such that
\begin{equation*}
\kappa_n(p_d)= r_n(\mu)+ \frac{1}{d}  \, \widehat{r}_n (\mfp)+o(1/d) \quad \text{and} \quad 
\kappa_n(q_d)= r_n(\nu)+ \frac{1}{d}  \, \widehat{r}_n (\mfq)+o(1/d) \quad 
\quad \forall \ n \geq 1.
\end{equation*} 
Moreover, the cumulants $\kappa_n(\, \cdot \,)$ and $r_n(\, \cdot \,)$ linearize the additive convolutions $p_d \boxplus q_d$ and $\mu \boxplus \nu$, respectively, 
so we obtain that 
\begin{equation*}
\kappa_n(p_d \boxplus q_d ) 
= r_n(\mu \boxplus \nu)+ \frac{1}{d}  \, \Big( \widehat{r}_n (\mfp) + \widehat{r}_n (\mfq) \Big) + o(d^{-1})  
\quad \forall \ n \geq 1. 
\end{equation*}
This means that $\widehat{r}_n (\mfp \boxplus \mfq) = \widehat{r}_n (\mfp) + \widehat{r}_n (\mfq)$, or equivalently, 
\begin{equation}\label{eqn:linealization_R_hat}
\widehat{R}_{\mfp \boxplus \mfq} = \widehat{R}_{\mfp} + \widehat{R}_{\mfq} \, .
\end{equation}
%
Then, by Theorem \ref{thm:infinitesimal_additive_finite_free cumulants} and Lemma \ref{lemma:additive.delta.zero}, we have that $\mfp \boxplus \mfq = (p_d\boxplus_d q_d)_{d\geq1}$ has infinitesimal distribution $(\rho,\rho')$ where $\rho=\mu\boxplus\nu$ and $\rho'$ has Cauchy transform $G_{\rho'}$ given by  
\begin{equation*} 
G_{\rho'} =
    -  (\widehat{R}_{\mfp \boxplus \mfq} \circ G_{\mu \boxplus \nu} ) \cdot G'_{\mu \boxplus \nu} -
    \gdos_{\mu \boxplus \nu}  
    =  -  (\widehat{R}_{\mfp} \circ G_{\mu \boxplus \nu} ) \cdot G'_{\mu \boxplus \nu} 
 -  (\widehat{R}_{\mfq} \circ G_{\mu \boxplus \nu} ) \cdot G'_{\mu \boxplus \nu} -
    \gdos_{\mu \boxplus \nu} \, . 
\end{equation*}    
\end{proof}

As a particular case, when one sequence of polynomials converges to $\delta_0$ in distribution, then the finite-free additive convolution and the infinitesimal free additive convolution yield the same distribution. 

\begin{corollary}
\label{cor:additive.delta.zero}
Suppose that $\mfp=(p_d)_{d\geq1}$ and $\mfq=(q_d)_{d\geq1}$ are two sequences of polynomials with infinitesimal distributions $(\mu,\mu')$ and $(\nu,\nu')$, respectively.
Let $(\gamma,\gamma')$ denote the infinitesimal free additive convolution $(\mu,\mu')\boxplus_B(\nu,\nu')$. 
If $\nu = \delta_0$, then the finite-free additive convolution $\mfp \boxplus \mfq = (p_d\boxplus_d q_d)_{d\geq1}$ has infinitesimal distribution $(\gamma,\gamma')$. 
\end{corollary}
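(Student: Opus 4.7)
The plan is to compare $G_{\rho'}$ from Theorem \ref{thm:infinitesimal_additive_finite_free} directly with $G_{\gamma'}$ from \eqref{eqn:Cauchy_Rtransforms_infinitesimal_additive_preli}, using the translation between cumulant fluctuations and infinitesimal free cumulants provided by Lemma \ref{lem:infinitesimal_single_polynomial_3}. The assumption $\nu=\delta_0$ collapses most of the discrepancy: since $r_n(\delta_0)=0$ for every $n\geq 1$, both $R_\nu$ and $H_\nu$ vanish, and $\mu\boxplus\delta_0 = \mu$, so $G_{\mu\boxplus\nu}=G_\mu$ and $H_{\mu\boxplus\nu}=H_\mu$.

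Concretely, I would start by substituting the relation
\begin{equation*}
\widehat{R}_{\mfp}(z) = \rinf_{\mu,\mu'}(z) - (H_\mu\circ K_\mu)(z)\cdot K'_\mu(z)
\end{equation*}
from Lemma \ref{lem:infinitesimal_single_polynomial_3} (and its analogue for $\mfq$) into \eqref{eq:Gprime sum}. This yields
\begin{equation*}
G_{\rho'} = G_{\gamma'} + \bigl[(H_\mu\circ K_\mu)\cdot K'_\mu\bigr]\circ G_{\mu\boxplus\nu}\cdot G'_{\mu\boxplus\nu} + \bigl[(H_\nu\circ K_\nu)\cdot K'_\nu\bigr]\circ G_{\mu\boxplus\nu}\cdot G'_{\mu\boxplus\nu} - H_{\mu\boxplus\nu}.
\end{equation*}
The second bracketed term vanishes because $H_\nu=0$, and the last term equals $-H_\mu$. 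For the first term, I would use the identity $K_\mu\circ G_\mu(z)=z$ together with its derivative $(K'_\mu\circ G_\mu)\cdot G'_\mu = 1$, which simplifies $[(H_\mu\circ K_\mu)\cdot K'_\mu]\circ G_\mu\cdot G'_\mu$ to simply $H_\mu\circ z = H_\mu$. Hence this term cancels with $-H_\mu$, leaving $G_{\rho'}=G_{\gamma'}$.

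Since the infinitesimal Cauchy transforms of $\rho'$ and $\gamma'$ agree as formal power series, their infinitesimal moments coincide, and therefore $\rho'=\gamma'$ as linear functionals on $\R[x]$ with $\rho'(1)=\gamma'(1)=0$. Combined with $\rho = \mu\boxplus\nu = \gamma$, this gives $(\rho,\rho')=(\gamma,\gamma')$, which is the desired statement.

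The only delicate step is the chain-rule simplification, where one must carefully track the composition with $G_\mu$ versus $G_{\mu\boxplus\nu}$ and apply the derivative of the inverse relation; everything else is direct substitution. I do not foresee any real obstacle beyond bookkeeping.
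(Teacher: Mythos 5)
Your argument is correct and follows essentially the same route as the paper: both reduce to Theorem \ref{thm:infinitesimal_additive_finite_free} with $\mu\boxplus\delta_0=\mu$, exploit $H_{\delta_0}=0$ to identify $\widehat{R}_{\mfq}$ with $\rinf_{\nu,\nu'}$, and then compare with \eqref{eqn:Cauchy_Rtransforms_infinitesimal_additive_preli}. The only cosmetic difference is that the paper handles the $\mfp$-terms by invoking Lemma \ref{lemma:additive.delta.zero} together with \eqref{eqn:Mingo.relation}, whereas you substitute Lemma \ref{lem:infinitesimal_single_polynomial_3} and cancel $H_\mu$ via the chain rule; these manipulations are equivalent.
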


\begin{proof}
Assume that $\nu = \delta_0$. 
This implies that $\mu \boxplus \nu = \mu$. 
Thus, Theorem \ref{thm:infinitesimal_additive_finite_free} gives that $\mfp \boxplus \mfq = (p_d\boxplus_d q_d)_{d\geq1}$ has infinitesimal distribution  $(\rho,\rho')$ where $\rho=\mu\boxplus\nu$ and $\rho'$ has Cauchy transform $G_{\rho'}$ given by 
\begin{equation}\label{eq:auxiliarGprime}
G_{\rho'} =  -  (\widehat{R}_{\mfp} \circ G_{\mu} ) \cdot G'_{\mu}
    -  (\widehat{R}_{\mfq} \circ G_{\mu} ) \cdot G'_{\mu}
-
    \gdos_{\mu} \,
=G_{\mu'} - (\widehat{R}_{\mfq} \circ G_{\mu} ) \cdot G'_{\mu},
\end{equation}
where the second equality follows from Lemma \ref{lemma:additive.delta.zero}. 

Now, since $(\gamma,\gamma')=(\mu,\mu')\boxplus_B(\nu,\nu')$, we know from \eqref{eqn:Cauchy_Rtransforms_infinitesimal_additive_preli} that $\gamma=\mu \boxplus \nu$ and $\gamma'$ is determined by its Cauchy transform 
\begin{equation*}
G_{\gamma'} = - ( \rinf_{\mu,\mu'} \circ G_{\mu \boxplus \nu}  ) \cdot G'_{\mu \boxplus \nu}
- ( \rinf_{\nu,\nu'} \circ G_{\mu \boxplus \nu}  ) \cdot G'_{\mu \boxplus \nu} \, .
\end{equation*}
Hence, to conclude that $(\rho,\rho')$ and $(\gamma,\gamma')$ are the same, it suffices to show that $G_{\rho'} = G_{\gamma'}$. 
To this end, note that the last relation reduces to 
\begin{equation*}
G_{\gamma'} = - ( \rinf_{\mu,\mu'} \circ G_{\mu}  ) \cdot G'_{\mu}
- ( \rinf_{\nu,\nu'} \circ G_{\mu}  ) \cdot G'_{\mu} ,
\end{equation*}
since $\nu = \delta_0$. 
But this is precisely the right-hand side of \eqref{eq:auxiliarGprime} since $G_{\mu'} = - ( \rinf_{\mu,\mu'} \circ G_{\mu}  ) \cdot G'_{\mu}$  due to \eqref{eqn:Mingo.relation} and $\widehat{R}_{\mfq}=\rinf_{\nu,\nu'}$ due to $H_{\nu} = 0$ because $\nu = \delta_0$, see Lemma \ref{lem:infinitesimal_single_polynomial_2}. 
\end{proof}

We end this part by describing, in the additive case, a general connection between polynomial and infinitesimal convolutions that exploits the theory of subordination functions.

\begin{theorem}
\label{prop:fluctuations_infinitesimal_subordination}
Suppose that $\mfp=(p_d)_{d\geq1}$ and $\mfq=(q_d)_{d\geq1}$ are two sequences of polynomials with infinitesimal distributions $(\mu,\mu')$ and $(\nu,\nu')$, respectively.
Let $(\gamma,\gamma')$ denote the infinitesimal free additive convolution $(\mu,\mu')\boxplus_B(\nu,\nu')$. 
Then the finite-free additive convolution $\mfp \boxplus \mfq = (p_d\boxplus_d q_d)_{d\geq1}$ has infinitesimal distribution $(\rho,\rho')$ where $\rho=\mu\boxplus\nu$ and $\rho'$ has Cauchy transform $G_{\rho'}$ given by 
\begin{equation}
\label{eq:Gprime_subordination}
G_{\rho'} = G_{\gamma'}
+\gdos_{\mu \boxplus \nu}  +\frac{\omega''_1}{2\omega'_1}+\frac{\omega''_2}{2\omega'_2} ,
\end{equation}
where $\omega_1$ and $\omega_2$ are the subordination functions determined by \eqref{eqn:def.subordination}. 
\end{theorem}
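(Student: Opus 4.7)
The plan is to start from the formula in Theorem \ref{thm:infinitesimal_additive_finite_free},
\begin{equation*}
G_{\rho'} = -(\widehat{R}_{\mfp}\circ G_{\mu\boxplus\nu})\cdot G'_{\mu\boxplus\nu} - (\widehat{R}_{\mfq}\circ G_{\mu\boxplus\nu})\cdot G'_{\mu\boxplus\nu} - \gdos_{\mu\boxplus\nu},
\end{equation*}
and replace each occurrence of $\widehat{R}$ by its expression in terms of the infinitesimal $R$-transform, using Lemma \ref{lem:infinitesimal_single_polynomial_3}:
\begin{equation*}
\widehat{R}_{\mfp} = \rinf_{\mu,\mu'} - (\gdos_\mu\circ K_\mu)\cdot K'_\mu,\qquad \widehat{R}_{\mfq} = \rinf_{\nu,\nu'} - (\gdos_\nu\circ K_\nu)\cdot K'_\nu.
\end{equation*}
After substitution, the two terms involving $\rinf_{\mu,\mu'}$ and $\rinf_{\nu,\nu'}$ will, by \eqref{eqn:Cauchy_Rtransforms_infinitesimal_additive_preli}, recombine into $G_{\gamma'}$, so the work is to identify the two remaining composite terms with $\gdos_{\mu\boxplus\nu}$ and the subordination corrections.

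The key step is to simplify $((\gdos_\mu\circ K_\mu)\cdot K'_\mu)\circ G_{\mu\boxplus\nu}$, multiplied by $G'_{\mu\boxplus\nu}$, using the subordination identity $G_{\mu\boxplus\nu}=G_\mu\circ\omega_1$ from \eqref{eqn:def.subordination}. Since $K_\mu\circ G_\mu=\mathrm{id}$, applying $K_\mu$ to both sides gives $K_\mu\circ G_{\mu\boxplus\nu}=\omega_1$. Differentiating $K_\mu\circ G_\mu=\mathrm{id}$ and then $G_{\mu\boxplus\nu}=G_\mu\circ\omega_1$ yields $(K'_\mu\circ G_{\mu\boxplus\nu})\cdot G'_{\mu\boxplus\nu}=\omega'_1$. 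Therefore
\begin{equation*}
\bigl(((\gdos_\mu\circ K_\mu)\cdot K'_\mu)\circ G_{\mu\boxplus\nu}\bigr)\cdot G'_{\mu\boxplus\nu} = (\gdos_\mu\circ\omega_1)\cdot\omega'_1,
\end{equation*}
and now Lemma \ref{lem:H.circ.function} (applied with $G_1=G_\mu$, $J=\omega_1$, $G_2=G_{\mu\boxplus\nu}$) converts the right-hand side into $\gdos_{\mu\boxplus\nu}+\tfrac{\omega''_1}{2\omega'_1}$. The analogous manipulation with $G_{\mu\boxplus\nu}=G_\nu\circ\omega_2$ gives $\gdos_{\mu\boxplus\nu}+\tfrac{\omega''_2}{2\omega'_2}$.

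Collecting all contributions, Theorem \ref{thm:infinitesimal_additive_finite_free} becomes
\begin{equation*}
G_{\rho'} = G_{\gamma'} + \Bigl(\gdos_{\mu\boxplus\nu}+\tfrac{\omega''_1}{2\omega'_1}\Bigr) + \Bigl(\gdos_{\mu\boxplus\nu}+\tfrac{\omega''_2}{2\omega'_2}\Bigr) - \gdos_{\mu\boxplus\nu} = G_{\gamma'} + \gdos_{\mu\boxplus\nu} + \tfrac{\omega''_1}{2\omega'_1} + \tfrac{\omega''_2}{2\omega'_2},
\end{equation*}
which is \eqref{eq:Gprime_subordination}. The main obstacle is the bookkeeping in the second paragraph: one must be careful to track the derivatives produced by the chain rule so that the subordination identities reduce the nested compositions $\gdos_\mu\circ K_\mu\circ G_{\mu\boxplus\nu}$ and $K'_\mu\circ G_{\mu\boxplus\nu}$ to quantities depending only on $\omega_1$ and its derivatives, after which Lemma \ref{lem:H.circ.function} does the rest.
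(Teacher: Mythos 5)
Your argument is correct, and it follows the same skeleton as the paper's proof --- start from Theorem \ref{thm:infinitesimal_additive_finite_free}, use the subordination relation $G_{\mu\boxplus\nu}=G_\mu\circ\omega_1$, and apply Lemma \ref{lem:H.circ.function} with $G_1=G_\mu$, $J=\omega_1$, $G_2=G_{\mu\boxplus\nu}$ to produce $\gdos_{\mu\boxplus\nu}+\tfrac{\omega''_1}{2\omega'_1}$ --- but it differs in two places worth noting. First, the paper never decomposes $\widehat{R}_{\mfp}$: it uses Lemma \ref{lemma:additive.delta.zero} in the form $-(\widehat{R}_{\mfp}\circ G_\mu)\cdot G'_\mu = G_{\mu'}+\gdos_\mu$ and pushes this whole expression through $\omega_1$ via $G'_{\mu\boxplus\nu}=(G'_\mu\circ\omega_1)\cdot\omega'_1$, which is slightly lighter bookkeeping than your substitution $\widehat{R}_{\mfp}=\rinf_{\mu,\mu'}-(\gdos_\mu\circ K_\mu)\cdot K'_\mu$ from Lemma \ref{lem:infinitesimal_single_polynomial_3} followed by the identities $K_\mu\circ G_{\mu\boxplus\nu}=\omega_1$ and $(K'_\mu\circ G_{\mu\boxplus\nu})\cdot G'_{\mu\boxplus\nu}=\omega'_1$ (both of which you state correctly). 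Second, the paper identifies $G_{\gamma'}$ at the very end as $(G_{\mu'}\circ\omega_1)\,\omega'_1+(G_{\nu'}\circ\omega_2)\,\omega'_2$ by invoking Proposition \ref{prop:infintesimal.subordination} (the Belinschi--Shlyakhtenko subordination formula for $\boxplus_B$), whereas you recognize $G_{\gamma'}$ through the $R$-transform characterization \eqref{eqn:Cauchy_Rtransforms_infinitesimal_additive_preli}. Your route therefore does not need Proposition \ref{prop:infintesimal.subordination} at all, which makes the proof marginally more self-contained given the earlier lemmas; the trade-off is the extra chain-rule tracking you flag yourself, and the sign/count check that the two copies of $\gdos_{\mu\boxplus\nu}$ produced by Lemma \ref{lem:H.circ.function} minus the one already present in Theorem \ref{thm:infinitesimal_additive_finite_free} leave exactly one, which you carry out correctly.
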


\begin{proof}
By Theorem \ref{thm:infinitesimal_additive_finite_free}, it suffices to show that  
$ -  (\widehat{R}_{\mfp} \circ G_{\mu \boxplus \nu} ) \cdot G'_{\mu \boxplus \nu} -  (\widehat{R}_{\mfq} \circ G_{\mu \boxplus \nu} ) \cdot G'_{\mu \boxplus \nu} - \gdos_{\mu \boxplus \nu}$ equals the right-hand side of \eqref{eq:Gprime_subordination}.  
For this, note first that $G_{\mu\boxplus \nu}=G_\mu\circ\omega_1$ implies  
\begin{equation*}
- (\widehat{R}_{\mfp} \circ G_{\mu \boxplus \nu} ) \cdot G'_{\mu \boxplus \nu}
=
	- (\widehat{R}_{\mfp} \circ G_\mu\circ\omega_1) \cdot ( G'_\mu\circ\omega_1) \cdot \omega'_1
=
	((G_{\mu'}+\gdos_{\mu})\circ\omega_1) \cdot \omega'_1,
\end{equation*}
where the second equality follows from the fact that $-(\widehat{R}_{\mfp} \circ G_\mu) \cdot  G'_\mu=G_{\mu'}+\gdos_{\mu}$, see \eqref{eq. qis0}.  
Then, taking $G_1=G_\mu$, $J=\omega_1$, and $G_2=G_{\mu\boxplus \nu}$, Lemma \ref{lem:H.circ.function} yields  
\begin{equation*}
(\gdos_{\mu}\circ\omega_1) \cdot \omega'_1=\gdos_{\mu \boxplus \nu} +\frac{\omega''_1}{2\omega'_1} \, .
\end{equation*}
Putting the last two equalities together, we obtain that 
\begin{equation*}
- (\widehat{R}_{\mfp} \circ G_{\mu \boxplus \nu} ) \cdot G'_{\mu \boxplus \nu}
= 
		(G_{\mu'}\circ\omega_1) \cdot \omega'_1 
	+ 
		\gdos_{\mu \boxplus \nu}+\frac{\omega''_1}{2\omega'_1}  \, .  
\end{equation*}
A similar argument gives that 
\begin{equation*}
-(\widehat{R}_{\mfq} \circ G_{\mu \boxplus \nu} ) \cdot G'_{\mu \boxplus \nu}
= 
		(G_{\nu'}\circ\omega_2) \cdot \omega'_2 
	+ 
		\gdos_{\mu \boxplus \nu} +\frac{\omega''_2}{2\omega'_2}.
\end{equation*}
Consequently, we have that 
\begin{align*} 
&-  (\widehat{R}_{\mfp} \circ G_{\mu \boxplus \nu} ) \cdot G'_{\mu \boxplus \nu} -  (\widehat{R}_{\mfq} \circ G_{\mu \boxplus \nu} ) \cdot G'_{\mu \boxplus \nu} - \gdos_{\mu \boxplus \nu}
\\ & \quad =  
		(G_{\mu'}\circ\omega_1) \cdot \omega'_1  
	+  
		(G_{\nu'}\circ\omega_2) \cdot \omega'_2
	+
		\frac{\omega''_1}{2\omega'_1}
	+
		\frac{\omega''_2}{2\omega'_2} 
	+
		\gdos_{\mu \boxplus \nu}.
\end{align*}
The conclusion follows from Proposition \ref{prop:infintesimal.subordination}, that asserts $G_{\gamma'}=(G_{\mu'}\circ\omega_1) \cdot \omega'_1  +  (G_{\nu'}\circ\omega_2) \cdot \omega'_2$. 
\end{proof}

\subsection{Multiplicative convolution}

\begin{theorem}
\label{prop:cauchy_multiplicative_finite_free}
	Suppose that $\mfp=(p_d)_{d\geq1}$ and $\mfq=(q_d)_{d\geq1}$ are two sequences of polynomials with infinitesimal distributions   $(\mu,\mu')$ and $(\nu,\nu')$, respectively. 
	Then the finite-free multiplicative convolution $\mfp \boxtimes \mfq = (p_d\boxtimes_d q_d)_{d\geq1}$ has  infinitesimal distribution $(\rho,\rho')$ where $\rho=\mu\boxtimes\nu$ and $\rho'$ has Cauchy transform $G_{\rho'}$ determined by 
	\begin{equation}\label{eq:Gprime multiplication}
		G_{\rho'} =  -  (\widehat{R}_{\mfp \boxtimes \mfq} \circ G_{\mu \boxtimes \nu} ) \cdot G'_{\mu \boxtimes \nu}
		-
		\gdos_{\mu \boxtimes \nu} 
	\end{equation}
with $\widehat{R}_{\mfp \boxtimes \mfq} (z) =  \sum_{n=1}^{\infty} \widehat{r}_{n}(\mfp \boxtimes \mfq) \, z^{n-1}$ and each $\widehat{r}_{n}(\mfp \boxtimes \mfq)$ given by \eqref{eqn:rhats_multiplicative}. 
\end{theorem}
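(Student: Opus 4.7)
The plan is to assemble the statement from two ingredients that are already on the table: Theorem \ref{thm:infinitesimal_multiplicative_finite_free}, which does all the combinatorial bookkeeping and establishes the existence of the cumulant fluctuations $\widehat r_n(\mfp\boxtimes\mfq)$, and Lemma \ref{lemma:additive.delta.zero}, which translates the existence of cumulant fluctuations for a single sequence of polynomials into the desired functional identity between $G_{\rho'}$, $\widehat R$, $G_{\rho}$, and $\gdos_\rho$. The main content of the proof is therefore to recognise that no new combinatorics is required beyond what was done for the additive case in Lemma \ref{lemma:additive.delta.zero}.

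First, I would record what Theorem \ref{thm:infinitesimal_multiplicative_finite_free} already gives. Under the hypotheses, the sequence $(\widehat r_n(\mfp\boxtimes\mfq))_{n\ge 1}$ exists, does not depend on $d$, and satisfies
\begin{equation*}
\kappa_n(p_d\boxtimes_d q_d)=r_n(\mu\boxtimes\nu)+\tfrac{1}{d}\,\widehat r_n(\mfp\boxtimes\mfq)+o(1/d).
\end{equation*}
Together with Proposition \ref{prop:finiteAsymptotics_multiplicative}, which ensures that $\mfp\boxtimes\mfq$ has limiting distribution $\mu\boxtimes\nu$, the forward direction of Lemma \ref{lem:infinitesimal_single_polynomial} then applies to the single sequence $\mfp\boxtimes\mfq$: it has infinitesimal distribution $(\rho,\rho')$ with $\rho=\mu\boxtimes\nu$, and the infinitesimal moments $m_n'(\mfp\boxtimes\mfq)$ admit the combinatorial expression \eqref{eqn:infinitesimal_moments_ffc_product} in terms of the cumulant fluctuations $\widehat r_n(\mfp\boxtimes\mfq)$ and the free cumulants $r_n(\mu\boxtimes\nu)$.

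Second, I would apply Lemma \ref{lemma:additive.delta.zero} to the sequence $\mfp\boxtimes\mfq$ regarded as a single sequence with infinitesimal distribution $(\rho,\rho')$ and cumulant fluctuation transform $\widehat R_{\mfp\boxtimes\mfq}$. That lemma yields directly
\begin{equation*}
G_{\rho'}=-(\widehat R_{\mfp\boxtimes\mfq}\circ G_{\mu\boxtimes\nu})\cdot G'_{\mu\boxtimes\nu}-\gdos_{\mu\boxtimes\nu},
\end{equation*}
which is exactly \eqref{eq:Gprime multiplication}. The coefficients $\widehat r_n(\mfp\boxtimes\mfq)$ defining $\widehat R_{\mfp\boxtimes\mfq}$ are then the ones displayed in \eqref{eqn:rhats_multiplicative}.

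There is no real obstacle: the combinatorial heavy lifting (the existence and the explicit formula for the cumulant fluctuations of a multiplicative convolution) was already carried out in Theorem \ref{thm:infinitesimal_multiplicative_finite_free}, and the passage from coefficients to formal power series was abstracted once and for all in Lemma \ref{lemma:additive.delta.zero}. What the statement emphasises, in contrast to the additive Theorem \ref{thm:infinitesimal_additive_finite_free}, is that linearisation \eqref{eqn:linealization_R_hat} fails in the multiplicative setting, so $\widehat R_{\mfp\boxtimes\mfq}$ cannot be split as a sum of pieces coming from $\mfp$ and $\mfq$ separately; one is forced to keep it packaged as a single series whose coefficients are given by the (more intricate) formula \eqref{eqn:rhats_multiplicative}.
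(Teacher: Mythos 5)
Your proposal is correct and follows essentially the same route as the paper: cite Proposition \ref{prop:finiteAsymptotics_multiplicative} for the limiting distribution $\mu\boxtimes\nu$, Theorem \ref{thm:infinitesimal_multiplicative_finite_free} for the $1/d$-expansion of $\kappa_n(p_d\boxtimes_d q_d)$ with coefficients \eqref{eqn:rhats_multiplicative}, and then Lemma \ref{lemma:additive.delta.zero} (via Lemma \ref{lem:infinitesimal_single_polynomial}) to obtain \eqref{eq:Gprime multiplication}. Your explicit invocation of the forward direction of Lemma \ref{lem:infinitesimal_single_polynomial} makes a step the paper leaves implicit, but the argument is the same.
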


\begin{proof}
By Proposition \ref{prop:finiteAsymptotics_multiplicative}, we know that $\mfp \boxtimes \mfq = (p_d\boxtimes_d q_d)_{d\geq1}$ that has limiting distribution $\mu \boxtimes \nu$.  
Moreover, by Theorem \ref{thm:infinitesimal_multiplicative_finite_free}, we have that 
\begin{equation*}
	\kappa_n(p_d \boxtimes_d q_d)= r_n(\mu \boxtimes \nu)+ \frac{1}{d}\widehat{r}_n ( p \boxtimes q ) + o(1/d) \qquad \text{for all } n\in \nn.
\end{equation*}
with each $\widehat{r}_{n}(\mfp \boxtimes \mfq)$ given by   \eqref{eqn:rhats_multiplicative}. 
The desired conclusion is then an immediate  consequence of Lemma \ref{lemma:additive.delta.zero}. 
\end{proof}

Similarly to the additive case, we can relate our results with infinitesimal freenees. 
The analogue in the multiplicative scenario corresponds to one of the sequences converging to $\delta_1$  in distribution.
\begin{proposition}\label{prop:finite_free_infinitesimal_coincide_mult}
Suppose that $\mfp=(p_d)_{d\geq1}$ and $\mfq=(q_d)_{d\geq1}$ are two sequences of polynomials with infinitesimal distributions $(\mu,\mu')$ and $(\nu,\nu')$, respectively.
Let $(\gamma,\gamma')$ denote the infinitesimal free multiplicative convolution $(\mu,\mu')\boxtimes_B(\nu,\nu')$. 
If $\nu = \delta_1$, then the finite-free multiplicative convolution $\mfp \boxtimes\mfq = (p_d\boxtimes_d q_d)_{d\geq1}$ has infinitesimal distribution $(\gamma,\gamma')$. 
\end{proposition}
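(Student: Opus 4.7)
The plan is to establish $(\rho, \rho') = (\gamma, \gamma')$ by showing first that the zeroth-order distributions agree and then that the Cauchy transforms of the infinitesimal parts coincide. Since $\mu \boxtimes \delta_1 = \mu$, we immediately have $\rho = \gamma = \mu$. For the infinitesimal parts, I would reduce the problem, using Theorem \ref{prop:cauchy_multiplicative_finite_free} and \eqref{eqn:Mingo.relation}, to proving the identity of formal power series
\[\widehat{R}_{\mfp \boxtimes \mfq} - \widehat{R}_{\mfp} = \rinf_{\gamma,\gamma'} - \rinf_{\mu,\mu'},\]
which, when combined with Lemma \ref{lem:infinitesimal_single_polynomial_3} applied to $\mu$ (and hence to eliminate the discrepancy $(H_\mu \circ K_\mu)K'_\mu$), yields $G_{\rho'} = G_{\gamma'}$.

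The heart of the argument is a coefficient-wise comparison: I would verify that for every $n$,
\[\widehat{r}_n(\mfp \boxtimes \mfq) - \widehat{r}_n(\mfp) = r'_n(\gamma, \gamma') - r'_n(\mu, \mu').\]
To do so, I would substitute $r_k(\delta_1) = \delta_{k,1}$ into both \eqref{eqn:rhats_multiplicative} and the formula from Definition \ref{def:infinitesimal_multiplicative_convolution}. In the first inner sum, the factor $r_{Kr(\pi)}(\delta_1)$ is nonzero only when $Kr(\pi) = 0_n$, forcing $\pi = 1_n$ and producing $\widehat{r}_n(\mfp)$ and $r'_n(\mu,\mu')$, respectively. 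In the second inner sum, $r_{Kr(\pi)\setminus W}(\delta_1)$ requires all blocks of $Kr(\pi)\setminus W$ to be singletons; since a partition with at most one non-singleton block is automatically non-crossing, each non-empty $W \subseteq [n]$ determines a unique $\pi_W \in NC(n)$ with $Kr(\pi_W) = \{W\} \cup \{\{i\}:i \notin W\}$, and the two sums reduce to $\sum_W r_{\pi_W}(\mu)\,\widehat{r}_{|W|}(\mfq)$ and $\sum_W r_{\pi_W}(\mu)\,r'_{|W|}(\nu,\nu')$. Finally, since $G_{\delta_1}(z) = 1/(z-1)$ gives $H_{\delta_1} = 0$, Lemma \ref{lem:infinitesimal_single_polynomial_3} yields $\widehat{r}_{|W|}(\mfq) = r'_{|W|}(\nu,\nu')$, making the two sums coincide.

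The remaining term to dispose of is the third sum in \eqref{eqn:rhats_multiplicative} over annular permutations. Here $r_{Kr_{t,s}(\sigma)}(\delta_1) \neq 0$ would force $Kr_{t,s}(\sigma) = \sigma^{-1}\gamma_{t,s} = \mathrm{id}$, i.e., $\sigma = \gamma_{t,s}$; but $\gamma_{t,s}$ does not belong to $S_{NC}(t,s)$, because its two cycles produce two orbits on $[t+s]$, violating the transitivity condition $\gamma_{t,s} \vee \sigma = 1_{t+s}$. Hence this sum vanishes identically, and the desired identity drops out cleanly.

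The main obstacle, in my view, is the combinatorial bookkeeping that identifies, for each non-empty $W \subseteq [n]$, the unique non-crossing partition $\pi_W$ whose Kreweras complement has the prescribed structure, and then uniformly treating the edge case $|W| = 1$ where $\pi_W = 1_n$ while the choice of $W$ ranges over the $n$ singletons of $0_n$. Once this bijection is laid out carefully, both the multiplicative formula \eqref{eqn:rhats_multiplicative} and Definition \ref{def:infinitesimal_multiplicative_convolution} collapse to parallel expressions, and the rest of the argument follows from the functional relations established earlier in the section.
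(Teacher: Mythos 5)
Your proof is correct, but it runs on the dual side of the moment--cumulant dictionary from the paper's own argument, so a brief comparison is in order. The paper works entirely with moments: it specializes \eqref{eqn:mprime_multiplicative} from Theorem \ref{prop:moments_for_multiplicative} and the infinitesimal-free moment formula \eqref{eqn:infinitesimal_moments_of_a_product} at $\nu=\delta_1$ and checks term by term that $m'_n(\mfp\boxtimes\mfq)=m_n(\gamma')$; since the infinitesimal moments determine the infinitesimal distribution, this closes the proof with no further translation. You instead compare the cumulant-side formulas, \eqref{eqn:rhats_multiplicative} against Definition \ref{def:infinitesimal_multiplicative_convolution}, proving $\widehat{r}_n(\mfp\boxtimes\mfq)-\widehat{r}_n(\mfp)=r'_n(\gamma,\gamma')-r'_n(\mu,\mu')$ and then converting this into $G_{\rho'}=G_{\gamma'}$ via Lemma \ref{lemma:additive.delta.zero}, \eqref{eqn:Mingo.relation}, and Lemma \ref{lem:infinitesimal_single_polynomial_3}. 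The combinatorial core is the same in both versions: the annular sum vanishes because $r_{Kr_{t,s}(\sigma)}(\delta_1)\neq 0$ would force $\sigma=\gamma_{t,s}$, which fails the transitivity condition defining $S_{NC}(t,s)$; the first inner sum survives only at $\pi=1_n$; the second collapses along the bijection sending a nonempty $W\subseteq[n]$ to the unique $\pi_W\in NC(n)$ with $Kr(\pi_W)=\{W\}\cup\{\{i\}:i\notin W\}$ (valid because a partition with at most one non-singleton block is automatically non-crossing); and $H_{\delta_1}=0$ gives $\widehat{r}_n(\mfq)=r'_n(\nu,\nu')$. What your route costs is the extra reduction step, which is sound because $f\mapsto(f\circ G_\mu)\cdot G'_\mu$ is injective on formal power series ($G_\mu$ has leading term $z^{-1}$); what it buys is the slightly stronger intermediate statement that the cumulant fluctuations themselves, not merely the infinitesimal moments, transform identically under $\boxtimes_d$ and $\boxtimes_B$ when $\nu=\delta_1$. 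One cosmetic point: the display in Definition \ref{def:infinitesimal_multiplicative_convolution} writes $r'_n$ where $r'_{\abs{V}}$ and $r'_{\abs{W}}$ are clearly intended (compare \eqref{eqn:infinitesimal_moments_of_a_product}); your reading is the intended one.
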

\begin{proof}
From Definition \ref{eqn:infinitesimal_moments_of_a_product} and \eqref{def:infinitesimal_multiplicative_convolution}, 
we know that $\gamma=\mu \boxtimes \nu$ and $\gamma'$ is determined by its moments
\begin{equation}\label{eqn:infinitesimal_moments_of_a_product_2}
m_n(\gamma') 
  \ \  = 
        \sum_{\pi\in NC(n)}\left(
            \sum_{V\in\pi} 
                m_{|V|}(\mu') \, m_{\pi\backslash V}(\mu) 
                \cdot 
                r_{Kr(\pi)}(\nu) 
    +
        \sum_{W\in Kr(\pi)}
                m_\pi(\mu)
                \cdot 
                r'_{\abs{W}}(\nu,\nu') \,  r_{Kr(\pi)\backslash W}(\nu) \right)
\quad \forall n \geq 1
            .
\end{equation}
Hence, by Theorem \ref{prop:moments_for_multiplicative} and Theorem \ref{prop:cauchy_multiplicative_finite_free}, it suffices to show that $m_n(\gamma') = m'_n(\mfp \boxtimes \mfq ) $ for every $n \geq 1$.  
For this, note first that $\nu = \delta_1$ implies $r_n(\nu) = 0$ if $n\geq 2$. 
Moreover, every permutation $\sigma \in S_{NC}(t,s)$ with $t+s = n$ contains at least one block with two or more elements. 
Consequently, no permutation $\sigma$ contributes to the right-hand side of  \eqref{eqn:mprime_multiplicative}. 
Now, we have that $H_{\nu} = 0$ since $\nu = \delta_1$, and we then get     $\widehat{R}_{\mfq}=\rinf_{\nu,\nu'}$ from Lemma \ref{lem:infinitesimal_single_polynomial_2}, i.e.,  
$\widehat{r}_n(\mfq)=r'_n(\nu,\nu')$ for every $n\geq 1$. 
It follows that \eqref{eqn:mprime_multiplicative} and \eqref{eqn:infinitesimal_moments_of_a_product_2} since $m_n(\mu')$ is simply another notation for $m'_n(\mfp)$. 
\end{proof}

In the multiplicative case, we obtain a reformulation of the infinitesimal distribution from the previous proposition when considering subordination functions. 

\begin{corollary} \label{Cor. infmult}
Suppose that $\mfp=(p_d)_{d\geq1}$ and $\mfq=(q_d)_{d\geq1}$ are two sequences of polynomials with infinitesimal distributions $(\mu,\mu')$ and $(\nu,\nu')$. 
If $\mu\neq\delta_0$ and $\nu = \delta_1$, then the finite-free multiplicative convolution $\mfp \boxtimes\mfq = (p_d\boxtimes_d q_d)_{d\geq1}$ has infinitesimal distribution $(\gamma,\gamma')$ where  $\gamma=\mu \boxtimes \nu$ and 
\begin{equation}\label{eqn:multiplicative_infinitesimal}
 G_{\gamma'} 
    =
        G_{\mu'}
    +
        G_{\nu'} (\theta_{\mu}) \cdot \theta'_{\mu}
\text{\qquad with \qquad}
\theta_{\mu}(z) =
\frac{G_{\mu}(z)}{ G_{\mu}(z) - 1/z}
\, .
 \end{equation}
\end{corollary}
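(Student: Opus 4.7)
The plan is to combine Proposition \ref{prop:finite_free_infinitesimal_coincide_mult} with the analytic subordination formula of Proposition \ref{prop:infintesimal.subordinationmulti}, and then simply compute the subordination functions for the trivial case $\mu \boxtimes \delta_1 = \mu$.

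First, since $\nu=\delta_1$, Proposition \ref{prop:finite_free_infinitesimal_coincide_mult} tells us that the finite-free multiplicative convolution $\mfp \boxtimes \mfq$ has infinitesimal distribution equal to the infinitesimal free multiplicative convolution $(\gamma,\gamma') = (\mu,\mu')\boxtimes_B(\delta_1,\nu')$, where $\gamma = \mu \boxtimes \delta_1 = \mu$. Thus, by Proposition \ref{prop:infintesimal.subordinationmulti}, we have
\begin{equation*}
G_{\gamma'}(z) = G_{\mu'}(\tau_1(z)) \, \tau_1'(z) + G_{\nu'}(\tau_2(z)) \, \tau_2'(z),
\end{equation*}
where $\tau_i(z) = 1/\omega_i(1/z)$ and $\omega_1, \omega_2$ are the multiplicative subordination functions determined by \eqref{MultSub}.

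Next, I would determine $\omega_1, \omega_2$ explicitly. Since $\mu \boxtimes \delta_1 = \mu$, the relation $\psi_\mu \circ \omega_1 = \psi_\mu$ forces $\omega_1(z) = z$; and using $\psi_{\delta_1}(z) = z/(1-z)$ and $\psi_{\delta_1}^{-1}(y) = y/(1+y)$, the relation $\psi_{\delta_1} \circ \omega_2 = \psi_\mu$ gives $\omega_2(z) = \psi_\mu(z)/(1+\psi_\mu(z))$. A quick consistency check against the second identity in \eqref{MultSub} confirms these choices. Consequently $\tau_1(z) = z$, and hence $\tau_1'(z)=1$, so the first summand collapses to $G_{\mu'}(z)$.

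For the second summand, I would use the standard relation $\psi_\mu(1/z) = z\,G_\mu(z) - 1$, which comes directly from the definitions of $\psi_\mu$ and $G_\mu$. Substituting this into $\tau_2(z) = 1/\omega_2(1/z) = (1+\psi_\mu(1/z))/\psi_\mu(1/z)$ yields
\begin{equation*}
\tau_2(z) = \frac{z\, G_\mu(z)}{z\, G_\mu(z)-1} = \frac{G_\mu(z)}{G_\mu(z) - 1/z} = \theta_\mu(z),
\end{equation*}
so the second summand becomes $G_{\nu'}(\theta_\mu(z)) \cdot \theta_\mu'(z)$, yielding exactly \eqref{eqn:multiplicative_infinitesimal}.

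The only delicate point is ensuring that $\omega_1, \omega_2$ are genuinely the subordination functions in the sense of \eqref{MultSub} (rather than some other pair satisfying the first identity only), and that the hypothesis $\mu \neq \delta_0$ is what guarantees $\psi_\mu \not\equiv 0$ so that $\omega_2$ and $\theta_\mu$ are well-defined as analytic maps off the real line; this is the step I would check most carefully. Once that is in place, the rest is just algebra on formal power series (or equivalently, analytic identities off the real line).
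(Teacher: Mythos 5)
Your proposal is correct and follows essentially the same route as the paper's proof: reduce to the infinitesimal free multiplicative convolution via Proposition \ref{prop:finite_free_infinitesimal_coincide_mult}, apply the subordination formula of Proposition \ref{prop:infintesimal.subordinationmulti}, and compute $\omega_1(z)=z$ and $\omega_2(z)=\psi_\mu(z)/(1+\psi_\mu(z))$ so that $\tau_1(z)=z$ and $\tau_2(z)=\theta_\mu(z)$. Your explicit flagging of the role of $\mu\neq\delta_0$ in making $\omega_2$ and $\theta_\mu$ well-defined is a point the paper leaves implicit, but it does not change the argument.
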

\begin{proof}


From Proposition \ref{prop:finite_free_infinitesimal_coincide_mult}, we know that  $(\gamma,\gamma')=(\mu,\mu')\boxtimes_B(\nu,\nu')$. 
This implies $\gamma = \mu \boxtimes \nu$. 
Moreover, from Proposition \ref{prop:infintesimal.subordinationmulti}, we get that 
\begin{equation}\label{eqn:multiplicative_infinitesimal_cauchyrep}
G_{\gamma'}
		=
			(G_{\mu'} \circ\tau_1)\tau'_1+(G_{\nu'}\circ\tau_2)\tau'_2
\quad \text{with} \quad 
\tau_{i} (z)  = \tfrac{1}{\omega_{i}(1/z)} 
\end{equation} 
and $\omega_{1},\omega_{2}$ determined by \eqref{MultSub}. 
Now, since $\nu=\delta_1$, we have that  $\psi_{\mu \boxtimes \nu} = \psi_{\mu}$ and $\psi_\nu (z) = \frac{z}{1-z}$. 
Consequently, \eqref{MultSub} yields
\begin{equation*}
\omega_{1} (z) = z
\text{\qquad and \qquad}
\omega_{2} (z) = \frac{\psi_{\mu}(z)}{\psi_{\mu}(z)+1} \, .
\end{equation*}
This implies $\tau_{1}(z) = z$ and $\tau_2(z) = \frac{1+\psi_{\mu}(1/z)}{\psi_{\mu}(1/z)} = \frac{G_{\mu}(z)}{ G_{\mu}(z) - 1/z}$. 
Therefore, \eqref{eqn:multiplicative_infinitesimal_cauchyrep} becomes  \eqref{eqn:multiplicative_infinitesimal}. 
\end{proof}

\begin{remark}
From a combinatorial point of view, the expression \eqref{eqn:multiplicative_infinitesimal} is equivalent to  
\begin{equation}\label{eqn:cauchy_multiplicative_infinitesimal_comb}
G_{\gamma'} =
{G_{\mu'}} + G_{\mfq,\mu} 
\text{\qquad with \qquad}
G_{\mfq,\mu}  = \sum_{n=1}^{\infty} \left( \sum_{\pi \in CI(n)} m_{\pi}(\mu)  \, r'_{\blocks{\pi}}(\nu,\nu')  \right)   z^{-n-1}
 \end{equation}
where $CI(n)$ denotes the set of all cyclic partitions of $[n]$. 
Indeed, since $Kr:NC(n) \to NC(n)$ is a bijection, \eqref{eqn:infinitesimal_moments_of_a_product} can be rewritten as
\begin{equation*}
m_n( \gamma' )
 = 
		\sum_{\pi\in NC(n)} 
		\sum_{V\in\pi}
			\Big(
					 m_{|V|}( \mu ) \, m_{\pi \backslash V}(\mu) \cdot r_{Kr(\pi)}(\nu)
				+
					m_{{Kr}^{-1}(\pi)}(\mu) \cdot 
					r'_{|V|}( \nu, \nu') \, r_{ \pi \backslash V}(\nu) 
			\Big)  \, .
\end{equation*}
But we know that $r_n(\nu)=0$ for $n \geq 2$ since $\nu=\delta_1$. 
Thus, we have that $r_{Kr(\pi)}(\nu)=0$ unless $\pi = 1_n$;
and that $r_{\pi \setminus V}(\nu)=0$ unless $\pi\setminus V$ contains only singletons as blocks.  
Consequently, the last expression reduces to  
\begin{equation*}
m_n(\gamma')
   \ \ \ =    \ \ \
        m_n(\mu')
       \ \ \ +
        \sum_{ \emptyset \neq S \subseteq [n] } 
            m_{{Kr}^{-1}(S \cup 0_{n\setminus S})}(\mu)
			\cdot
        	r'_{\abs{S}}(\nu,\nu')  \, 
             r_{0_{n\setminus S}}(\nu)			
\end{equation*}
where we have used the fact that $S \mapsto (0_{n\setminus S} \cup S,S)$ is a bijection from nonempty sets $S \subseteq [n]$ to pairs $(\pi,V)$ where $V \in \pi \in NC(n)$ and $ \pi\setminus V$ contains only singletons. 
The map  $S \mapsto {Kr}^{-1}(S \cup 0_{n\setminus S})$ is a bijection from nonempty sets $S \subseteq [n]$ to the set of cyclic interval partitions $CI(n)$, and this bijection satisfies that $\abs{S}= \blocks{\pi}$ whenever $\pi = {Kr}^{-1}(S \cup 0_{n\setminus S})$. 
Hence, we obtain that 
\begin{equation*}
m_n( \gamma' )
   \ \ \  =    \ \ \
        m_n(\mu')
       \ \ \ 
+
        \sum_{\pi \in CI(n)}
		m_{\pi} (\mu)
		\cdot
        r'_{\blocks{\pi}}(\nu,\nu') \, .
\end{equation*}
Finally, since $G_{\gamma'} = \sum_{n=1}^{\infty} m_n(\gamma') \, z^{-n-1}$ and $G_{\mu'} = \sum_{n=1}^{\infty} m_n(\mu') \, z^{-n-1}$, we obtain \eqref{eqn:cauchy_multiplicative_infinitesimal_comb}. 
\end{remark}


\section{Applications and Examples}\label{sec:applications_examples}

In this section, we apply our main results to compute infinitesimal distributions for several sequences of polynomials. 
Through the entire section, we consider sequences of polynomials $\mfp=(p_d)_{d\geq1}$ and $\mfq=(q_d)_{d\geq1}$  with infinitesimal distributions $(\mu,\mu')$ and $(\nu,\nu')$, respectively.
Then, along the way, we determine their infinitesimal distribution under concrete or additional hypothesis, such as specific classes of polynomials or polynomials with either exact finite-free cumulants or zero infinitesimal moments. 
If appropriate, we identify $\mu'$ and $\nu'$ with signed measures on $\rr$. 
%
%
%

\subsection{Basic Examples}

We first show that our results recover the infinitesimal distribution of sequences of polynomials $\mfp=(p_d)_{d\geq1}$ studied in \cite{arizmendi2023finite}, concretely, the case when each finite-free cumulant $\kappa^{(d)}_n(p_d)$ coincides with the free  cumulant $r_n(\mu)$ up to a correction term of order $o(1/d)$. 

\begin{example}[Zero cumulant fluctuations]
\label{exm:lim.dist.fixed.cumulants}
Assume that $\mfp=(p_d)_{d\geq1}$ has zero cumulant fluctuations, namely, its finite-free cumulants satisfy $\kappa^{(d)}_n(p_d) = r_n(\mu)+ o(1/d)$ for any integers $1 \leq n \leq d$. 
Note that this is equivalent to $\widehat{R}_\mfp(z)=0$.  
Thus, from Lemma \ref{lemma:additive.delta.zero}, we obtain that 
\begin{equation*}
 G_{\mu'} = -\gdos_\mu = \frac{G_{\mu}''}{2G_{\mu}'} - \frac{G_{\mu}'}{G_{\mu}} . 
\end{equation*}
This example recovers Theorem 1.6 from \cite{arizmendi2023finite}.
\end{example}

It is natural to examine the analogue case for moments, that is, each finite moment $m_n(p_d)$ coincides with $m_n(\mu)$ up to a correction term of order $o(1/d)$. 
In this case, the infinitesimal distribution is trivial since we have $m_n(\mu') = 0$, and hence $G_{\mu'}(z)=0$. 
However, it is still worth considering to establish $\widehat{R}_\mfp (z)$, which will be needed for further examples involving finite-free convolutions. 

\begin{example}[Zero infinitesimal moments]
\label{exm:lim.dist.fixed.moments}
Assume that $\mfp=(p_d)_{d\geq1}$ has zero infinitesimal moments, namely, $m_n(p_d) = m_n(\mu)+ o(1/d)$ for any integers $1 \leq n\leq d$. 
Thus, we have that $G_{\mu'}(z)=0$, and hence  $\rinf_{\mu,\mu'}=-(G_{\mu'}\circ K_\mu) \, \cdot K_\mu'=0$. 
Thus, by Lemma \ref{lem:infinitesimal_single_polynomial_2} and Lemma \ref{lem:infinitesimal_single_polynomial_3}, we obtain that 
\begin{equation*}\label{eqn:r_hat_p_with_no_infinitesimal}
\gdos_{\mu}= - (\widehat{R}_{\mfp} \circ G_{\mu} ) \cdot G'_{\mu} 
\,  \qquad \text{and} \qquad 
\widehat{R}_\mfp =  - (\gdos_{\mu} \circ K_{\mu} ) \cdot K'_{\mu} = -\frac{ \ K''_\mu(z) \ }{2K'_\mu(z)}-\frac{1}{z} \, . 
\end{equation*}
\end{example}

The following is the case where all but finitely many roots are located at a single value. 
As in the previous example, a main goal is determining $\widehat{R}_\mfp (z)$ for later use on finite-free convolutions.

\begin{example}[Finite discrete deviation from Dirac distribution]
\label{exm:lim.dist.finite.pertubation}
Assume that each polynomial $p_d$  from the sequence $\mfp=(p_d)_{d\geq1}$ is given by 
\begin{equation*}
p_d \, (x) =(x-\alpha)^{d-s}(x-\alpha_1)(x-\alpha_2) \cdots (x-\alpha_s) 
\end{equation*}
for a fixed integer $s \geq 1$ and fixed real numbers $\alpha, \alpha_1,\dots, \alpha_s\in \mathbb{R}$. 
The $n$-th moment of $p_d$ is given by
\begin{equation*}
m_n(p_d)
    = 
        \alpha^n + \frac{1}{d} \Big(-s\alpha^n + \sum_{k=1}^s \alpha_k^n \Big) 
    = 
        m_n (\mu) +\frac{1}{d} m_n(\mu') \, . 
\end{equation*}
This implies that the infinitesimal distribution 
$(\mu,\mu')=(\delta_\alpha, -s \delta_\alpha + \sum_{k=1}^s \delta_{\alpha_k})$. 
Hence, the Cauchy transforms of $\mu$ and $\mu'$ are respectively given by
\begin{equation*}
G_{\mu}(z) =  \frac{1}{z-\alpha}= \frac{1}{F_\mu (z)}
\qquad \text{and} \qquad
G_{\mu'} (z) =  - \frac{s}{z-\alpha} + \sum_{k=1}^{s} \frac{1}{z-\alpha_k} \, . 
\end{equation*}
Moreover, from the Cauchy transform of $\mu$, we obtain that 
\begin{equation*}
K_{\mu}(z) = G^{<-1>}(z)  = \frac{1}{z} + \alpha, 
\qquad 
K'_{\mu}(z) =  - \frac{1}{z^2},  
\qquad \text{and} \qquad 
\gdos_{\mu}(z) = \frac{1}{2} \frac{d}{dz} \ln(F'_\mu) = 0 \, .
\end{equation*}
Thus, since $ \gdos_{\mu}(z)=0$, Lemma \ref{lem:infinitesimal_single_polynomial_2} yields   
\begin{equation} 
G_{\mu'}= - (\widehat{R}_\mfp \circ G_{\mu}) \cdot G'_{\mu} 
\qquad \text{ and }\qquad 
\widehat R_{\mfp} = -(G_{\mu'}\circ K_{\mu}) \cdot K'_{\mu} \, .
\end{equation}
Notice that the last two equations imply that $\widehat{R}_\mfp=\rinf_{\mu,\mu'}$, 
so the cumulant fluctuations of $\mfp=(p_d)_{d\geq1}$ coincide exactly with the infinitesimal cumulants of $(\mu,\mu')$  in this case. 
The function $\widehat{R}_\mfp(z)$ is explicitly given by 
\begin{align}
\widehat{R}_\mfp(z) 
 &= 
		 -(G_{\mu'} (  K_{\mu} (z) ) \cdot  K'_{\mu}(z)
     = 
        \frac{1}{z^2} \cdot 
        G_{\mu'}\left( {\textstyle \frac{1+\alpha z}{z}} \right) 
    =       \frac{1}{z^2} \cdot         \left( - \frac{s}{\frac{1+\alpha z}{z}-\alpha} + \sum_{k=1}^{s} \frac{1}{\frac{1+\alpha z}{z}-\alpha_k}  \right)   \nonumber \\
   & =        \frac{1}{z} \cdot         \left( -s  + \sum_{k=1}^{s} \frac{1}{ 1+ \alpha z -\alpha_k z}  \right) 
    =
        \frac{1}{z} \cdot
        \sum_{k=1}^{s} \frac{-\alpha z +\alpha_k z}{ 1+ \alpha z -\alpha_k z}  
    =
        \frac{1}{z} \cdot
        \sum_{k=1}^{s} \frac{ (\alpha_k-\alpha) z}{ 1 - (\alpha_k-\alpha) z}. 
\end{align}
Therefore, from the geometric series of each $(\alpha_k-\alpha) z$, we conclude that 
\begin{equation} 
\widehat{R}_\mfp(z)
    =
        \sum_{n=1}^{\infty} 
		\sum_{k=1}^{s} 
				(\alpha_k-\alpha)^n z^{n-1} 
\qquad \text{ and } \qquad 
\widehat{r}_n( \mfp )
\ 	=
		\sum_{k=1}^{s}
				(\alpha_k-\alpha)^n.   
\end{equation}
It is worth noticing that this can also be established through finite-free additive  convolution. 
Indeed, having determining $\widehat{R}_\mfp(z)$ for $\alpha=0$, the general case would follow from $\widehat{R}_{\mfp \boxplus \mfq}$ with $q_d=(x-\alpha)^d$. 
\end{example}

\subsection{Differentiation}

In this part, we analyze the change in infinitesimal distribution after repeated differentiation of polynomials.

\begin{proposition}[Repeated differentiation]
\label{prop:repeated.diff.general}
 Let $t\in(0,1]$ and $\alpha\in\rr$ and construct as sequence of degrees $d_i$ and values $1\leq j_i \leq d_i$ such that $\lim_{i\to\infty} d_i=\infty$ and $\frac{j_i}{d_i}=t+\frac{\alpha}{d_i}+ o(1/d_i)$. Let $\mfp=(p_i)_{i=1}^\infty$ be a sequence of polynomials such that $p_i$ has degree $d_i$ and with limiting distribution $(\mu,\mu')$ and consider the polynomial $q_i= p^{(d_i-j_i)}_{i}$ obtained after differentiating $d_i-j_i$ times the polynomial $p_{i}$. Then the sequence $\mfq=(q_i)_{i=1}^\infty$ has infinitesimal distribution $(\nu,\nu')$. Where $\nu=\text{Dil}_{t} \mu^{\boxplus \frac{1}{t}}$ and the Cauchy transform of $\nu'$ is
\begin{equation}
\label{eq:infinitesimal.several.derivatives}
G_{\nu'} (z) =  -\frac{\alpha}{t} 
\left(G_{\nu}(z)+\frac{G'_{\nu}(z)}{G_{\nu}(z)} \right) -  \widehat{R}_{\mfp}\left( t G_{\nu}(z) \right) \cdot G'_{\nu} (z)- \gdos_{\nu}(z).
\end{equation}
\end{proposition}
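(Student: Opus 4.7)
The strategy is to reduce to the multiplicative-convolution case via the standard identification of polynomial differentiation with finite-free multiplicative convolution. Setting $q^*_{d_i}(x):=x^{d_i-j_i}(x-1)^{j_i}$, a direct calculation from the coefficient-wise definition of $\boxtimes_{d_i}$ shows $p_i\boxtimes_{d_i}q^*_{d_i}=x^{d_i-j_i}\,\widehat q_i$, where $\widehat q_i$ is the monic normalization of $p_i^{(d_i-j_i)}$. Consequently
\[
\mu_{p_i\boxtimes q^*_{d_i}}=\tfrac{d_i-j_i}{d_i}\delta_0+\tfrac{j_i}{d_i}\mu_{q_i}, \qquad m_n(\mu_{q_i})=\tfrac{d_i}{j_i}\,m_n(\mu_{p_i\boxtimes q^*_{d_i}})\quad(n\geq 1),
\]
so the problem reduces to determining the infinitesimal distribution of $\mfp\boxtimes\mfq^*:=(p_i\boxtimes_{d_i}q^*_{d_i})_i$ and then stripping off the Dirac mass at $0$.

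First I would handle the auxiliary sequence $\mfq^*:=(q^*_{d_i})_i$. Since $m_n(\mu_{q^*_{d_i}})=j_i/d_i=t+\alpha/d_i+o(1/d_i)$ for every $n\geq 1$, its infinitesimal distribution is $(\nu^*,(\nu^*)')$ with $\nu^*=(1-t)\delta_0+t\delta_1$ and $m_n((\nu^*)')=\alpha$. A short moment computation (or comparison with the identity $m_n(\mu\boxtimes\nu^*)=\sum_{\pi\in NC(n)}r_\pi(\mu)\,t^{|Kr(\pi)|}$) identifies $\mu\boxtimes\nu^*=(1-t)\delta_0+t\nu$ with $\nu=\text{Dil}_t\,\mu^{\boxplus 1/t}$, yielding $G_\rho(z)=(1-t)/z+tG_\nu(z)$, where $\rho:=\mu\boxtimes\nu^*$.

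Next I would expand $m_n(\mu_{p_i\boxtimes q^*_{d_i}})$ to first order in $1/d_i$ using Corollary~\ref{cor:truncated_expasions}, which pairs \emph{cumulants} of $p_i$ with \emph{moments} of $q^*_{d_i}$ and thereby lets us sidestep computing $\widehat R_{\mfq^*}$ explicitly. Substituting $\kappa_\pi(p_i)=r_\pi(\mu)+(1/d_i)\sum_{V\in\pi}\widehat r_{|V|}(\mfp)r_{\pi\setminus V}(\mu)+o(1/d_i)$ and $m_\tau(q^*_{d_i})=t^{|\tau|}+(\alpha|\tau|/d_i)t^{|\tau|-1}+o(1/d_i)$ produces $m_n(\rho')$ as a sum of three combinatorial pieces: a $\widehat r_{|V|}(\mfp)$-sum weighted by $t^{|Kr(\pi)|}$, an $\alpha$-sum obtained by marking one block of $Kr(\pi)$, and the annular correction from Corollary~\ref{cor:truncated_expasions}. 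Combined with the expansion $d_i/j_i=1/t-\alpha/(t^2 d_i)+o(1/d_i)$, this gives
\[
G_{\nu'}(z)=\tfrac{1}{t}G_{\rho'}(z)-\tfrac{\alpha}{t}\bigl(G_\nu(z)-1/z\bigr).
\]

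Finally, the three combinatorial pieces of $G_{\rho'}$ must be translated into closed-form transforms. The $\widehat R_\mfp$-sum becomes $-t\,\widehat R_\mfp(tG_\nu)\cdot G'_\nu$ after invoking the subordination identity $tG_\nu(z)=G_\mu(\omega(z))$ with $\omega(z)=z+(1-t)/(tG_\nu(z))$, which one derives from the free-convolution-power relation $R_\nu(z)=R_\mu(tz)$; the $\alpha$-sum converts, via differentiation in $t$ of the leading identity $\sum_\pi r_\pi(\mu)\,t^{|Kr(\pi)|}=tm_n(\nu)$, into the $G_\nu+G'_\nu/G_\nu$ term; and the annular sum turns into $-H_\rho$, which reduces to $-tH_\nu$ plus calibrating Cauchy-transform corrections using Lemma~\ref{lem:H.circ.function} on the mixture $G_\rho=(1-t)/z+tG_\nu$. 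The principal obstacle is this final assembly: the appearance of $\widehat R_\mfp(tG_\nu(\cdot))$ rather than the more immediate $\widehat R_\mfp(G_\rho(\cdot))$ that Theorem~\ref{prop:cauchy_multiplicative_finite_free} would produce forces the subordination $tG_\nu=G_\mu\circ\omega$ into the proof, and one must carefully track how the $H_\rho\to H_\nu$ conversion and the $\alpha$-corrections recombine so that the surviving terms collapse to \eqref{eq:infinitesimal.several.derivatives} precisely.
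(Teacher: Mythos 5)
Your reduction to finite-free multiplicative convolution with $q^{*}_{d_i}(x)=x^{d_i-j_i}(x-1)^{j_i}$ is a genuinely different route from the paper's proof, which simply quotes the exact identity $\kappa_n^{(j_i)}(q_i)=(j_i/d_i)^{n-1}\kappa_n^{(d_i)}(p_i)$ from the $S$-transform paper, reads off the cumulant fluctuations $\widehat r_n(\mfq)=\alpha(n-1)t^{n-2}r_n(\mu)+t^{n-1}\widehat r_n(\mfp)$, and applies Lemma \ref{lemma:additive.delta.zero} together with $1+G'_\nu/G_\nu^2=(R'_\nu\circ G_\nu)G'_\nu$. Your factorization, the identification $\mu\boxtimes\nu^{*}=(1-t)\delta_0+t\nu$, the relation $G_{\nu'}=\tfrac1tG_{\rho'}-\tfrac{\alpha}{t}(G_\nu-1/z)$, and the first two combinatorial pieces are sound; in fact the $\widehat R_\mfp$-piece needs no subordination, since $r_{\pi\setminus V}(\mu)\,t^{|Kr(\pi)|}=t^{|V|}\,r_{\pi\setminus V}(\nu)$ turns it into $-t\,\widehat R_\mfp(tG_\nu)\cdot G'_\nu$ directly via Proposition \ref{prop:infinitesimal-moment-cumulant-functional}, and the $\alpha$-piece comes out as $-\alpha(G'_\nu/G_\nu+1/z)$ using $R_\nu(G_\nu)=z-1/G_\nu$.

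The gap is in your third piece and in the final assembly. The annular term of \eqref{eqn:truncated_expasion_multiplicative_moments} is the \emph{mixed} sum $-\tfrac n2\sum_\sigma r_\sigma(\mu)\,m_{Kr_{t,s}(\sigma)}(q^{*}_{d_i})/(ts)$, whose leading order equals $-h_n(\nu)$ on the nose, because $|\sigma|+|Kr_{t,s}(\sigma)|=n$ gives $r_\sigma(\mu)\,t^{|Kr_{t,s}(\sigma)|}=r_\sigma(\nu)$; it is neither $-h_n(\rho)$ nor $-t\,h_n(\nu)$, and Lemma \ref{lem:H.circ.function} cannot be applied to the affine mixture $G_\rho=(1-t)/z+tG_\nu$, which is not a composition. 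Assembling the correctly computed pieces through your own relation yields
\begin{equation*}
G_{\nu'}=-\frac{\alpha}{t}\Bigl(G_\nu+\frac{G'_\nu}{G_\nu}\Bigr)-\widehat R_\mfp(tG_\nu)\cdot G'_\nu-\frac1t\,\gdos_\nu ,
\end{equation*}
i.e.\ the coefficient of $\gdos_\nu$ is $1/t$, not $1$, so the "precise collapse" to \eqref{eq:infinitesimal.several.derivatives} you assert does not occur for $t\neq1$. A direct check at $n=2$ confirms this: the exact identity $m_2=\kappa_1^2+\tfrac{j-1}{j}\kappa_2$ applied to $q_i$ gives $m_2(q_i)=(r_1^2+tr_2)+\tfrac1{d_i}\bigl(2r_1\widehat r_1+(\alpha-1)r_2+t\widehat r_2\bigr)+o(1/d_i)$, matching $-\tfrac1t\gdos_\nu$ rather than $-\gdos_\nu$. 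The discrepancy traces to a $1/d_i$-versus-$1/j_i$ normalization issue (the degree of $q_i$ is $j_i\approx td_i$, so the annular term in \eqref{eqn:moment_cumulant_approx_2} carries $1/j_i$, not $1/d_i$); the paper's own proof applies \eqref{eq. qis0} to $\mfq$ with $1/d_i$-normalized fluctuations and thereby incurs the same rescaling mismatch. Your route, carried out carefully, would surface this inconsistency rather than reproduce the stated formula, so as written the proposal does not prove \eqref{eq:infinitesimal.several.derivatives}.
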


\begin{proof}
Recall from \cite[Proposition 3.4]{arizmendi2024s} that the cumulants of $q_i$ are given by 
\[
\ffc{n}{j_i}(q_i) = \left( \frac{j_i}{d_i}\right)^{n-1} \ffc{n}{d_i}(p_i) =\left(t^{n-1}+\tfrac{\alpha (n-1)t^{n-2}}{d_i} + o(1/d_i)\right) \ffc{n}{d_i}(p_i)  \quad \text{ for } 1 \leq n \leq j_i \leq d_i. 
\]
Thus, \eqref{eqn:hypothesis_cumulants_ver_1} yields 
$$\ffc{n}{j_d}(q_d) = t^{n-1} r_n (\mu) + \frac{1}{d_i} \left(\alpha (n-1)t^{n-2}  r_n (\mu) + t^{n-1} \widehat{r}_n(\mfp)\right) + o(1/d_i).$$
This means that the cumulant fluctuations are $\widehat{r}_n(\mfq)=\alpha (n-1)t^{n-2}  r_n (\mu) + t^{n-1} \widehat{r}_n(\mfp)$. Notice that
$$\sum_{n=1}^\infty \alpha (n-1)t^{n-2}  r_n (\mu) z^{n-1} = 
\alpha z \sum_{n=2}^\infty (n-1) r_{n} (\mu) (tz)^{n-2} = \alpha z R'_{\mu}(tz)= \frac{\alpha z}{t} R'_\nu(z),$$
so we get that 
$$\widehat{R}_\mfq(z)= \alpha z R'_{\mu}(tz) + \widehat{R}_\mfp(tz)$$

Using \eqref{eq. qis0} we obtain that
$$G_{\nu'} =  -\frac{\alpha}{t} G_{\nu}(z) R'_\nu\left( G_{\nu}(z) \right) \cdot G'_{\nu} (z) -  \widehat{R}_{\mfp}\left( t G_{\nu}(z) \right) \cdot G'_{\nu} (z)- \gdos_{\nu}(z).$$
To simplify we notice that substituting $G_\nu$ in \textit{(2)} from Proposition \ref{prop:basic.funct.relation} and differentiating one has that
$$1 + \frac{G'_\nu}{G_\nu^2}= (R'_\nu\circ G_\nu)G'_\nu.$$
Using this in the previous equation, yields the desired result.
\end{proof}

\begin{corollary}[Repeated differentiation]
\label{cor:infinitesimal.several.derivatives.alpha.zero}
With the assumptions of Proposition \ref{prop:repeated.diff.general}, if $\alpha=0$, namely $\frac{j_i}{d_i}=t+ o(1/d_i)$, then the formula simplifies to 
\begin{equation}
\label{eq:infinitesimal.several.derivatives.alpha.zero}
G_{\nu'}(z) =-  \widehat{R}_{\mfp}\left( t G_{\nu}(z) \right) \cdot G'_{\nu} (z)- \gdos_{\nu}(z).
\end{equation}
\end{corollary}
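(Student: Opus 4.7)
The plan is essentially to specialize Proposition \ref{prop:repeated.diff.general} to the case $\alpha=0$. The hypothesis of the corollary, $\frac{j_i}{d_i} = t + o(1/d_i)$, is exactly the hypothesis of the proposition with $\alpha = 0$, since $\frac{j_i}{d_i} = t + \frac{0}{d_i} + o(1/d_i)$. Therefore all conclusions of the proposition apply; in particular, the finite-free multiplicative convolution \dots wait, here we use the identity \eqref{eq:infinitesimal.several.derivatives} for the Cauchy transform of $\nu'$.

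Setting $\alpha = 0$ in \eqref{eq:infinitesimal.several.derivatives}, the leading term $-\tfrac{\alpha}{t}\bigl(G_\nu(z) + \tfrac{G'_\nu(z)}{G_\nu(z)}\bigr)$ vanishes identically (note that the factor $1/t$ is unproblematic because $t \in (0,1]$, so no division-by-zero issue arises), and what remains is exactly
\begin{equation*}
G_{\nu'}(z) = -\widehat{R}_{\mfp}\bigl(tG_\nu(z)\bigr)\cdot G'_\nu(z) - \gdos_\nu(z),
\end{equation*}
which is the stated formula \eqref{eq:infinitesimal.several.derivatives.alpha.zero}.

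I do not anticipate any obstacle here: the corollary is a direct specialization of a single formula, and no additional combinatorial or analytic argument is needed beyond observing that the $\alpha$-dependent term drops out. If one wanted to be thorough, one could also verify the specialization from scratch at the level of cumulant fluctuations: with $\alpha = 0$, the formula $\widehat{r}_n(\mfq) = \alpha(n-1)t^{n-2} r_n(\mu) + t^{n-1} \widehat{r}_n(\mfp)$ from the proof of the proposition reduces to $\widehat{r}_n(\mfq) = t^{n-1}\widehat{r}_n(\mfp)$, so $\widehat{R}_{\mfq}(z) = \widehat{R}_\mfp(tz)$, and then Lemma \ref{lemma:additive.delta.zero} applied to $\mfq$ yields \eqref{eq:infinitesimal.several.derivatives.alpha.zero} immediately.
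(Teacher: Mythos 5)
Your proposal is correct and is exactly the intended argument: the paper states this corollary without proof precisely because it is the immediate specialization of \eqref{eq:infinitesimal.several.derivatives} to $\alpha=0$, where the term $-\tfrac{\alpha}{t}\bigl(G_{\nu}+\tfrac{G'_{\nu}}{G_{\nu}}\bigr)$ vanishes. Your optional cross-check via $\widehat{r}_n(\mfq)=t^{n-1}\widehat{r}_n(\mfp)$ also matches the computation inside the proof of Proposition \ref{prop:repeated.diff.general}.
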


\begin{corollary}
With the assumptions of Proposition \ref{prop:repeated.diff.general}, if $\mfp$ does not have cumulant fluctuations, namely $\widehat{R}_\mfp=0$, then formula \eqref{eq:infinitesimal.several.derivatives} simplifies to
\begin{equation}
G_{\nu'} =   -\frac{\alpha}{t} 
\left(G_{\nu}+\frac{G'_{\nu}}{G_{\nu}} \right) - \gdos_{\nu}.
\end{equation}
\end{corollary}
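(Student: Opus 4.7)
The plan is to apply Proposition \ref{prop:repeated.diff.general} directly and observe that the hypothesis $\widehat{R}_\mfp=0$ annihilates one of the three terms on the right-hand side of \eqref{eq:infinitesimal.several.derivatives}.

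More precisely, I would first invoke Proposition \ref{prop:repeated.diff.general} verbatim: since $\mfp$ has infinitesimal distribution $(\mu,\mu')$ and $j_i/d_i = t + \alpha/d_i + o(1/d_i)$, the differentiated sequence $\mfq$ has infinitesimal distribution $(\nu,\nu')$ with $\nu = \text{Dil}_t \mu^{\boxplus 1/t}$ and
\begin{equation*}
G_{\nu'}(z) = -\frac{\alpha}{t}\left(G_\nu(z) + \frac{G'_\nu(z)}{G_\nu(z)}\right) - \widehat{R}_{\mfp}(tG_\nu(z)) \cdot G'_\nu(z) - \gdos_\nu(z).
\end{equation*}
Then I would observe that the assumption $\widehat{R}_\mfp = 0$, understood as an identity of formal power series, implies $\widehat{R}_\mfp(w) = 0$ for any power series $w$ one may substitute. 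Taking $w = tG_\nu(z)$, which is a well-defined formal power series in $z^{-1}$ since $G_\nu(z) = z^{-1} + O(z^{-2})$, yields $\widehat{R}_\mfp(tG_\nu(z)) = 0$. Thus the middle summand vanishes, and the remaining two terms reproduce the stated formula.

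This is essentially a one-line specialization of Proposition \ref{prop:repeated.diff.general}; there is no genuine obstacle. The only point meriting a moment of care is confirming that the substitution $z \mapsto tG_\nu(z)$ is admissible at the level of formal power series (so that $\widehat{R}_\mfp = 0$ genuinely forces the composition to vanish term-by-term), but this follows immediately from the fact that $tG_\nu(z)$ has no constant term in $z^{-1}$, so each coefficient of the composition is a finite sum of products of coefficients of $\widehat{R}_\mfp$.
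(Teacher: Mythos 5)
Your proof is correct and matches the paper's (implicit) argument: the corollary is stated without proof precisely because it is the immediate specialization of Proposition \ref{prop:repeated.diff.general} obtained by setting $\widehat{R}_\mfp=0$, which kills the middle term. Your remark on the admissibility of the formal substitution $z\mapsto tG_\nu(z)$ is a fine (if unnecessary) precaution.
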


\begin{corollary}[One derivative]
Let $\mfp=(p_i)_{i=1}^\infty$ be a sequence of polynomials such that $p_i$ has degree $d_i$ tending to $\infty$ and with limiting distribution $(\mu,\mu')$ and consider the polynomial $q_i= p'_{i}$ obtained after differentiating once the polynomial $p_{i}$. Then the sequence $\mfq=(q_i)_{i=1}^\infty$ has infinitesimal distribution $(\nu,\nu')$. Where $\nu=\mu$ and 
\begin{equation}
\label{eq:infinitesimal.one.derivative}
\nu'=\mu'+\mu-\MM(\mu),
\end{equation}
where $\MM(\mu)$ is the inverse Markov-Krein transform of $\mu$.
\end{corollary}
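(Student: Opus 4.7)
The plan is to deduce this directly from Proposition~\ref{prop:repeated.diff.general} applied to the case of a single differentiation. Writing $q_i = p'_i = p_i^{(d_i - j_i)}$ forces $j_i = d_i - 1$, so $j_i/d_i = 1 - 1/d_i$, which fits the hypothesis of Proposition~\ref{prop:repeated.diff.general} with parameters $t = 1$ and $\alpha = -1$. The zeroth-order part is immediate: $\nu = \operatorname{Dil}_1\mu^{\boxplus 1} = \mu$, reflecting the fact that removing a single root does not perturb the empirical distribution at leading order.

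Next I specialize formula \eqref{eq:infinitesimal.several.derivatives} at $t=1$, $\alpha = -1$. This yields
\[
G_{\nu'}(z) = G_\mu(z) + \frac{G'_\mu(z)}{G_\mu(z)} - \widehat{R}_{\mfp}\!\left(G_\mu(z)\right) G'_\mu(z) - \gdos_\mu(z).
\]
Applying Lemma~\ref{lemma:additive.delta.zero} to the sequence $\mfp$, the last two terms collapse to $G_{\mu'}(z)$. Thus
\[
G_{\nu'} \;=\; G_\mu \,+\, \frac{G'_\mu}{G_\mu} \,+\, G_{\mu'}.
\]
At this point, the conclusion reduces to identifying the middle term as the Cauchy transform of a signed measure of the form $-\MM(\mu)$.

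The key identification, which is the only nontrivial step, is that the inverse Markov--Krein transform $\MM$ of Kerov satisfies $G_{\MM(\mu)}(z) = -G'_\mu(z)/G_\mu(z)$. This can be seen as the defining characterization of $\MM(\mu)$ in \cite{kerov1998} (one checks that the right-hand side has the correct $1/z$ asymptotics at infinity and agrees with Kerov's construction via the logarithmic derivative of $G_\mu$), and it is compatible with the identification of $\gdos_\mu$ given in the introduction. Granting this identity, the displayed equation becomes $G_{\nu'} = G_\mu - G_{\MM(\mu)} + G_{\mu'}$, which, by linearity of the Cauchy transform on signed measures, corresponds to $\nu' = \mu + \mu' - \MM(\mu)$, as claimed.

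The main obstacle is confirming the Cauchy-transform formula for the inverse Markov--Krein transform with the correct sign convention; once that is in place, everything else is a direct plug-in into Proposition~\ref{prop:repeated.diff.general} combined with Lemma~\ref{lemma:additive.delta.zero}.
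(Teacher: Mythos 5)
Your proposal is correct and follows essentially the same route as the paper: specialize Proposition~\ref{prop:repeated.diff.general} to $t=1$, $\alpha=-1$, collapse $-(\widehat{R}_{\mfp}\circ G_\mu)\cdot G'_\mu - \gdos_\mu$ to $G_{\mu'}$ via Lemma~\ref{lemma:additive.delta.zero}, and identify $G'_\mu/G_\mu = -G_{\MM(\mu)}$ using Kerov's formula (the paper cites \cite[Equation (2.4.3)]{kerov1998} for exactly this identity, with the same sign convention you adopt). The step you flag as the "only nontrivial" one is handled by the paper in the same way, by direct citation.
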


\begin{proof}
Notice that if we differentiate once, then $\frac{d_i-1}{d_i}=1+\frac{-1}{d_i}+ o(1/d_i)$, so we are in the case $t=1$ and $\alpha=-1$. Therefore $\nu=\mu$ and \eqref{eq:infinitesimal.several.derivatives} yields
$$
G_{\nu'} = 
\left(G_{\mu}+\frac{G'_{\mu}}{G_{\mu}} \right) - \left( \widehat{R}_{\mfp}\circ G_{\mu} \right) \cdot G'_{\mu} - \gdos_{\mu}=G_{\mu}+\frac{G'_{\mu}}{G_{\mu}}+G_{\mu'}.
$$
By \cite[Equation(2.4.3)]{kerov1998}, the middle term can be identified as the negative of the inverse Markov-Krein transform $G_{\MM(\mu)}=-\frac{G'_{\mu}}{G_{\mu}}$ and the conclusion follows.
\end{proof}

\begin{example}[Differentiating Hermite Polynomials]
\label{ex.hermite}
Recall from \cite[Example 5.8]{arizmendi2023finite} that the Hermite polynomials have limiting distribution $(\mu,\mu')$ where $\mu=\mathfrak{s}$ is semicircular law and $\mu'=\tfrac{1}{2}\left(\mathfrak{a}-\mathfrak{b}\right)$ is the difference of a symmetric arcsine, $\mathfrak{a}$ and a Bernoulli distribution, $\mathfrak{b}$.

Recall from \cite[Example 5.2.7]{kerov1998}
that $\MM(\mathfrak{s})=\MM(\mathfrak{a})$. Therefore, if we differentiate the Hermite polynomials once, we obtain and infinitesimal distribution $(\nu,\nu')$ where $\nu=\mathfrak{s}$ is a semicircle law and by \eqref{eq:infinitesimal.one.derivative} we get that
 $$\nu'=\tfrac{1}{2}\left(\mathfrak{a}-\mathfrak{b}\right) +\mathfrak{s}-\mathfrak{a}=\mathfrak{s}-\tfrac{1}{2}\mathfrak{a}-\tfrac{1}{2}\mathfrak{b}.$$
\end{example}

\begin{example}[Differentiating Bernoulli Polynomials]
\label{ex.bernoulli}
Consider the sequence of polynomials $\mfp=(p_i)_{i=1}^\infty$ where $p_i(x):=(x-1)^i(x+1)^i$ has degree $d_i=2i$ with half of the roots at $1$ and half at $-1$. Since for all $i\in\nn$ the spectral measure $\meas{p_i}=\frac{1}{2}\delta_{1}+\frac{1}{2}\delta_{-1}$ is a symmetric Bernoulli, $\mathfrak{b}$, then $m_n(p_i) = m_n(\mathfrak{b})$ for all $n$, and then the sequence has zero infinitesimal moments. Thus, Example \ref{exm:lim.dist.fixed.moments} yields that $\mfp$ has infinitesimal distribution $(\mathfrak{b},\mu')$ with Cauchy transforms given by
$$G_{\mathfrak{b}}(z)=\frac{z}{(z-1)(z+1)}, \qquad \text{and} \qquad G_{\mu'}(z)=0.$$
One can check (see \cite[Example 12.8]{nica2006lectures}) that $K_{\mathfrak{b}}(z)=\frac{1+\sqrt{1+4z^2}}{2z}$, so 

$$K'_{\mathfrak{b}}(z)=-\frac{1+\sqrt{1+4z^2}}{2z^2\sqrt{1+4z^2}}, \qquad K''_{\mathfrak{b}}(z)=\frac{ 2(1+4z^2)^{3/2} + 3(1+4z^2)-1 }{2z^3(1+4z^2)^{3/2} }, $$
and
$$
\widehat{R}_\mfp =-\frac{ \ K''_{\mathfrak{b}}(z) \ }{2K'_{\mathfrak{b}}(z)}-\frac{1}{z} = \frac{\sqrt{1+4z^2}-1}{2z(1+4z^2)}.
$$

Consider now $t=\frac{1}{2}$ and construct the polynomial $q_i= p^{(i)}_{i}$ obtained after differentiating $i$ times the polynomial $p_{i}$ to obtain a polynomial of degree $i$. Then the sequence $\mfq=(q_i)_{i=1}^\infty$ has infinitesimal distribution $(\nu,\nu')$. We know (see for instance \cite[Example 14.15]{nica2006lectures}) that $\nu=\text{Dil}_{1/2} {\mathfrak{b}}^{\boxplus 2}=\mathfrak{a}$ is an arcsine distribution, with Cauchy transform
$$G_{\mathfrak{a}}(z)=\frac{1}{\sqrt{z^2-4}}.$$
So we obtain,
$$G'_{\mathfrak{a}}(z)=\frac{-z}{(z^2-4)^{3/2}}\qquad \text{and}\qquad H_{\mathfrak{a}}=\frac{2}{z(z^2-4)}= \frac{1}{4}\left(\frac{1}{z+2}+\frac{1}{z-2} \right) - \frac{1}{2z}.$$
Notice that $H_{\mathfrak{a}}$ is the Cauchy transform of the measure $\frac{1}{2}\mathfrak{b}_2-\delta_0$, a difference between a Bernoulli with atoms at 2 and $-2$ and and delta Dirac distribution with atom at 0.

In order to use Corollary \ref{cor:infinitesimal.several.derivatives.alpha.zero}, we compute
\begin{equation*}
 \widehat{R}_{\mfp}\left( \frac{1}
{2}G_{\mathfrak{a}}(z) \right) \cdot G'_{\mathfrak{a}} (z)=\frac{\sqrt{1+\frac{1}{z^2-4}}-1}{\frac{1}{(z^2-4)^{1/2}}(1+\frac{1}{z^2-4})} \frac{-z}{(z^2-4)^{3/2}} =-\frac{z}{\sqrt{z^2-4}\sqrt{z^2-3}} +\frac{z}{z^2-3}.
\end{equation*}
can identify the first term as the Cauchy transform obtained as the operation $\flat$ of two Bernoulli distributions. The operation $\flat$ was recently introduced in \cite[Lemma 3.2]{goldstein2024free} in connection to free zero bias. On the other hand, the second term is the Cauchy transform of a Beronulli distribution.

Using Corollary \ref{cor:infinitesimal.several.derivatives.alpha.zero} we obtain
\begin{equation*}
G_{\nu'} (z) =  -  \widehat{R}_{\mfp}\left( \frac{1}
{2}G_{\mathfrak{a}}(z) \right) \cdot G'_{\mathfrak{a}} (z)- \gdos_{\mathfrak{a}}(z) =\frac{z}{\sqrt{z^2-4}\sqrt{z^2-3}} -\frac{z}{z^2-3} -\frac{2}{z(z^2-4)}.
\end{equation*}
If we denote by $\mathfrak{b}_c:=\frac{1}{2}\delta_{-c}+ \frac{1}{2}\delta_{c}$ the symmetric Bernoulli distribution with atoms at $-c$ and $c$, then we can express $\nu'$ as 
$$\nu'= \mathfrak{b}_2\flat \mathfrak{b}_{\sqrt{3}} -  \mathfrak{b}_{\sqrt{3}} + \tfrac{1}{2}\delta_0 - \tfrac{1}{2} \mathfrak{b}_{2},$$
which is a signed measure with total mass $0$.
\end{example}

\subsection{Infinitesimal distribution under additive convolution}

We now turn to the application of our results to finite-free additive  convolutions of concrete sequences of polynomials. 
We start with an example that retrieves, as a particular cases, the distributions examined by Shlyakhtenko in \cite[Section 4.1]{shlyakhtenko2018free}.

\begin{example} 
[Additive convolution with finite perturbation]
\label{exm:deviation_from_dirac}
Suppose that each polynomial $q_d$ from the sequence $\mfq=(q_d)_{d\geq1}$ is given by 
\begin{equation*}
	q_d \, (x) =x^{d-t}(x-\beta_1)(x-\beta_2) \cdots (x-\beta_t) 
\end{equation*}
for a fixed integer $t \geq 1$ and fixed real numbers $ \beta_1,\dots, \beta_t\in \mathbb{R}$. 
Observe that this a special case of Corollary \ref{cor:additive.delta.zero} since $\mfq = (q_d)_{d\geq1}$ has indeed limiting root distribution equal to $\delta_0$. 
Thus, the sequence $\mfp \boxplus \mfq = (p_d\boxplus_d q_d)_{d\geq1}$ has infinitesimal distribution  $(\rho,\rho')$  where $\rho=\mu \boxplus \delta_0 = \mu$ and $\rho'$ has Cauchy transform 
	$
G_{\rho'} 
	=
		G_{\mu'}
	-
		(\widehat{R}_\mfq \circ G_{\mu}) \cdot G'_{\mu} 
$.  
Moreover, from Example \ref{exm:lim.dist.finite.pertubation}, the Cauchy transform $G_{\rho'} $ takes the form 
\begin{equation*}
	G_{\rho'} 
	= G_{\mu'}  
	- 		\sum_{k=1}^{t} \frac{\beta_k \, G'_\mu }{1-\beta_k \, G_{\mu} } \ .
\end{equation*}
Additionally, if $\mfp=(p_d)_{d\geq1}$ has zero infinitesimal moments, see Example \ref{exm:lim.dist.fixed.moments}, then $G_{\mu'} = 0$; 
and Corollary \ref{cor:additive.delta.zero} gives that the pair $(\rho,\rho')$ coincides with the infinitesimal free additive convolution of $(\mu,\mu') = (\mu,0) $ and $(\nu,\nu')=(\delta_0,-s \delta_0 + \sum_{k=1}^t \delta_{\beta_k})$. 
Consequently, this example retrieves the distributions studied by Shlyakhtenko in \cite[Section 4.1]{shlyakhtenko2018free}.  
In particular, if each $q_d$ has at most one nonzero root, i.e., $t=1$, 
then $G_{\rho'} $ simplifies to  
\begin{equation*}
	G_{\rho'} = 
	\frac{ \beta \, G'_{\mu} }{1 - \beta \, G_{\mu} }.
\end{equation*}
\end{example}

The previous example gives an instance where finite-free convolution and infinitesimal convolution yield the same distributions. 
In general, however, this is not the case, even if both $\mfp=(p_d)_{d\geq1}$ and $\mfq=(q_d)_{d\geq1}$ have zero infinitesimal moments. 
We establish this next.

\begin{example}[Additive convolution of polynomials with fixed moments] \label{ex.additivefixedmoments}
Suppose that both $\mfp=(p_d)_{d\geq1}$ and $\mfq=(q_d)_{d\geq1}$ have zero infinitesimal moments, see Example \ref{exm:lim.dist.fixed.moments},   
meaning that their infinitesimal distributions are $(\mu,\mu')=(\mu,0)$ and $(\nu,\nu')=(\nu,0)$, respectively, where we use 0 to denote the trivial measure that assigns zero to every measurable set, following the notation in \cite{shlyakhtenko2018free}.
It then follows from  Theorem \ref{thm:infinitesimal_additive_finite_free} and Example \ref{exm:lim.dist.fixed.moments} that $\mfp \boxplus \mfq = (p_d\boxplus_d q_d)_{d\geq1}$ has infinitesimal distribution  $(\rho,\rho')$  where 
$\rho = \mu \boxplus \nu$ and $\rho'$ has Cauchy transform $G_{\rho'}$ given by 
\begin{equation*}
	G_{\rho'} =   
	- \left( 
		\frac{K''_\mu(G_{\mu\boxplus\nu})}{2K'_\mu(G_{\mu\boxplus\nu})}+\frac{1}{G_{\mu\boxplus\nu}}
	+
		\frac{K''_\nu(G_{\mu\boxplus\nu})}{2K'_\nu(G_{\mu\boxplus\nu})}+\frac{1}{G_{\mu\boxplus\nu}}
	\right) \cdot G'_{\mu\boxplus\nu}
	- H_{\mu\boxplus\nu}.
\end{equation*}
In terms of subordination functions, see Theorem \ref{prop:fluctuations_infinitesimal_subordination}, and since $(\mu \boxplus \nu,0)$ is the infinitesimal free additive convolution of $(\mu,0)$ and $(\nu,0)$, the Cauchy transform $G_{\rho'} $ is also given by 
\begin{equation*}
	G_{\rho'} =\gdos_{\mu \boxplus \nu}+\frac{\omega''_1}{2\omega'_1}+\frac{\omega''_2}{2\omega'_2} .
\end{equation*}
\end{example}

We now address the case when both $\mfp=(p_d)_{d\geq1}$ and $\mfq=(q_d)_{d\geq1}$ have zero cumulant fluctuations. 

\begin{example}[Convolution with a sequence with zero cumulant fluctuations]
Suppose that $\mfp=(p_d)_{d\geq1}$ has zero cumulant fluctuations, see Example \ref{exm:lim.dist.fixed.cumulants}. 
Then, from Theorem \ref{thm:infinitesimal_additive_finite_free}, we obtain that $\mfp \boxplus \mfq = (p_d\boxplus_d q_d)_{d\geq1}$ has infinitesimal distribution  $(\rho,\rho')$  where $\rho=\mu\boxplus\nu$ and $\rho'$ has Cauchy transform $G_{\rho'}$ given by 
\begin{equation*}
	G_{\rho'} = -  (\widehat{R}_{\mfq} \circ G_{\mu \boxplus \nu} ) \cdot G'_{\mu \boxplus \nu}
	-\gdos_{\mu \boxplus \nu} .
\end{equation*}
Additionally, if $\mfq=(q_d)_{d\geq1}$ has also zero cumulant fluctuations, the last expression reduces to $G_{\rho'} = -\gdos_{\mu \boxplus \nu}$. 
\end{example}

The next example examines the case when both $\mfp=(p_d)_{d\geq1}$ and $\mfq=(q_d)_{d\geq1}$ have all but finitely many roots located at $0$. 
For this, we obtain that the infinitesimal moments of the convolution equal the sum of the individual infinitesimal moments.

\begin{example}[Additive convolution of finite perturbations]
\label{exm:deviation_from_dirac2}
Suppose that each $p_d$ and each $q_d$ from the sequences $\mfp=(p_d)_{d=s}^\infty$ and $\mfq=(q_d)_{d=t}^\infty$ are given by 
\begin{equation*}
	p_d(x)=x^{d-s}(x-\alpha_1)(x-\alpha_2) \cdots (x-\alpha_s) 
\qquad \text{and}\qquad 
	q_d(x)=x^{d-t}(x-\beta_1)(x-\beta_2) \cdots (x-\beta_t)
\end{equation*}
for fixed integers $s,t \geq 1$ and fixed real numbers  $\alpha_1,\dots, \alpha_s, \beta_1,\dots,\beta_t\in \mathbb{R}$.  
From Example \ref{exm:lim.dist.finite.pertubation}, we know that $\mfp=(p_d)_{d=s}^\infty$ and $\mfq=(q_d)_{d=t}^\infty$ have infinitesimal distributions $(\mu,\mu')=(\delta_0,-s\delta_0 + \sum_{k=1}^{s}\delta_{\alpha_k})$ and  $(\nu,\nu')=(\delta_0,-t\delta_0 + \sum_{k=1}^{t}\delta_{\beta_k})$, respectively. 
Then, from Theorem \ref{thm:infinitesimal_additive_finite_free}, we have that $\mfp \boxplus \mfq = (p_d\boxplus_d q_d)_{d\geq1}$ has infinitesimal distribution  $(\rho,\rho')$  where 
$\rho = \mu \boxplus \nu = \delta_0 $ and $\rho'$ has Cauchy transform $G_{\rho'}$ given by 
\begin{equation*}
G_{\rho'} =  
-  (\widehat{R}_{\mfp} \circ G^{}_{\delta_0} ) \cdot G'_{\delta_0} 
-  (\widehat{R}_{\mfq} \circ G^{}_{\delta_0} ) \cdot G'_{\delta_0}
+ \gdos^{}_{\delta_0}.
\end{equation*}
But, from  Example \ref{exm:lim.dist.finite.pertubation}, we also know that  $G_{\mu'}= - (\widehat{R}_\mfp \circ G^{}_{\delta_0}) \cdot G'_{\delta_0}$ and $G_{\nu'}= - (\widehat{R}_\mfq \circ G^{}_{\delta_0}) \cdot G'_{\delta_0}$ in this case. 
Hence, since $H^{}_{\delta_0}=0$, we obtain that 
\begin{equation*}
	G_{\rho'} 
=  
		G_{\mu'} +G_{\nu'} 
=	
{ \left( 	- \frac{s}{z} + \sum_{k=1}^{s} \frac{1}{z-\alpha_k} \right)
	+ \left(- \frac{t}{z} + \sum_{k=1}^{t} \frac{1}{z-\beta_k} \right) } , 
\end{equation*}
or equivalently, $	\rho' = \mu' + \nu' = -(s+t)\delta_0 + \sum_{k=1}^{s} \delta_{\alpha_k}+\sum_{k=1}^{t} \delta_{\beta_k}$. 
\end{example}
From Corollary \ref{cor:additive.delta.zero}, we know that finite-free convolution and infinitesimal free convolution yield the same distributions in the additive case whenever either $\mu = \delta_0$  or $\nu = \delta_0$. 
The previous example is a concrete case where our methods provide an alternative approach to computing infinitesimal free convolutions.

\subsection{Infinitesimal distribution under multiplicative convolution} 

Before we take on concrete examples of infinitesimal distributions induced by finite-free multiplicative convolution, we would like to stress that the computation of $\widehat{R}_{\mfp \boxtimes \mfq}$ is a main distinction from the additive case. 
Indeed, for multiplicative convolutions, we have the functional relation \eqref{eq:Gprime multiplication}, namely,  
\begin{equation*}
G_{\rho'} 
		=  -  (\widehat{R}_{\mfp \boxtimes \mfq} \circ G_{\mu \boxtimes \nu} ) \cdot G'_{\mu \boxtimes \nu}
		-
		\gdos_{\mu \boxtimes \nu},
\end{equation*}
which, since $\widehat{R}_{\mfp \boxplus \mfq} = \widehat{R}_{\mfp} + \widehat{R}_{\mfq}$, is entirely analogous to \eqref{eq:Gprime sum} in the additive case.  
However, while $\widehat{R}_{\mfp \boxplus \mfq}$ can be straightforwardly computed by adding $\widehat{R}_{\mfp}$ and $\widehat{R}_{\mfq}$, the computation of $\widehat{R}_{\mfp \boxtimes \mfq}$ seems to be more intricate. 
For instance, even if both $\mfp=(p_d)_{d\geq1}$ and $\mfq=(q_d)_{d\geq1}$ have zero cumulant fluctuations, meaning that 
$\widehat{r}_n(\mfp)= \widehat{r}_n(\mfq) = 0$  for all $n \geq 1$, 
then \eqref{eqn:rhats_multiplicative} reduces to  
\begin{equation}\label{eqn:annular_generating_question}
\widehat r_n(\mfp\boxtimes \mfq)
	= 
-	
		\frac{n}{2} 
		\sum_{\substack{ t+s = n \\ \sigma \in S_{NC}(t,s) } } 
				\frac{r_{\sigma}(\mu) \cdot r_{Kr_{t,s}(\sigma)}(\nu)}{ts} \, , 
\end{equation}
but, to the best of our knowledge, the generating function associated to coefficients satisfying the combinatorial relation \eqref{eqn:annular_generating_question} appears to be a non-trivial matter. 
Nonetheless, our results can still be successfully applied to obtain the infinitesimal distribution in several relevant scenarios. 
%

\begin{example}[Multiplicative convolution with Marchenko-Pastur] 
Assume that both $\mfp=(p_d)_{d\geq1}$ and $\mfq=(q_d)_{d\geq1}$ have zero cumulant fluctuations. 
If $\nu$ is the Marchenko-Pastur of parameter $1$, which is the distribution characterized by the condition that $r_n(\nu)=1$ for all $n \geq 1$, then \eqref{eqn:annular_generating_question} becomes 
\begin{equation*}
\widehat r_n(\mfp\boxtimes \mfq)
		= 
			-\frac{n}{2} 
				\sum_{\substack{ t+s = n \\ \sigma \in S_{NC}(t,s) } } 
				\frac{r_{\sigma}(\mu)}{ts} \, , 
\quad	 \text{or equivalently}, \quad 
\widehat R_{\mfp\boxtimes \mfq}=-H_{\mu} \, .
\end{equation*}
Hence, from Theorem \ref{prop:cauchy_multiplicative_finite_free}, the sequence $\mfp \boxtimes \mfq = (p_d\boxtimes_d q_d)_{d\geq1}$ has infinitesimal distribution  $(\rho,\rho')$  where $\rho=\mu\boxtimes\nu$ and $\rho'$ has Cauchy transform $G_{\rho'}$ given by 
\begin{equation*} 
G_{\rho'} = H_{\mu} \circ G_{\mu\boxtimes\nu}  \cdot G'_{\mu\boxtimes\nu} -H_{\mu\boxtimes\nu}. 
\end{equation*}
The Cauchy transform $G_{\mu\boxtimes\nu}$, and consequently, $G'_{\mu\boxtimes\nu}$ and $H_{\mu\boxtimes\nu}$, can be recovered from the functional equation
\begin{equation*}
z \, G_{\mu\boxtimes\nu}(z)^2 
=-G_\mu \left(\tfrac{-1}{G_{\mu\boxtimes\nu}(z)} \right) \, .
\end{equation*}
More concretely, we have that $\nu$ is the Marchenko-Pastur of parameter $1$ if each $q_d$ is given by 
\begin{equation*}
q_d(x) = \sum_{i=0}^d (-1)^i x^{d-i}  \binom{d}{i} \frac{(d)_{i} }{d^i} ,
\end{equation*}
which is called the $d$-th Laguerre polynomial of parameter $1$ and it satisfies that  $\kappa^{d}_n(  q_d ) = 1$ for $1\leq n \leq d$. 
\end{example}

We now consider the case when  $\mfq=(q_d)_{d\geq1}$ has all but finitely many roots located at $0$. 
In this case, the polynomials $p_d\boxtimes_d q_d$ have roots that concentrate at $0$ regardless of $p_d$ as $d \to \infty$, since $\mu \boxtimes \delta_0 = \delta_0$, 
with infinitesimal moments corresponding to those of a multiple and dilation of $\nu'$ by $m_1(\mu)$, in particular, they do not depend on $\mu'$.  

\begin{example}[Multiplication with finite rank polynomial]
Let $\mfq=(q_d)_{d=s}^\infty$ be as in Example \ref{exm:deviation_from_dirac}. 
Then, from Example \ref{exm:lim.dist.finite.pertubation}, we know that 
$\mfq=(q_d)_{d=s}^\infty$ has infinitesimal distribution  $(\nu,\nu')=\left( \delta_0,-t \delta_0 + \sum_{k=1}^{t} \delta_{\beta_k}\right)$. 
In this case, \eqref{eqn:rhats_multiplicative} from Theorem \ref{thm:infinitesimal_multiplicative_finite_free} reduces to 
\begin{equation*}
\widehat r_n(\mfp \boxtimes \mfq)
	=
\widehat r_n ( \mfq ) \  (r_1(\mu))^{n} \, .
\end{equation*}
Indeed, since $r_n(\delta_0)=0$ for every integer $n \geq 1$, 
we have that $r_{Kr(\pi)}(\nu)=0$ for any partition $\pi \in NC(n)$ and that $r_{Kr_{t,s}(\sigma)}(\nu)=0$ for any permutation $\sigma \in S_{NC}(t,s)$ with $t+s=n$; meanwhile, $\widehat{r}_{|W|}( \mfq ) \, r_{Kr(\pi)\backslash W}(\nu) \cdot r_\pi(\mu)  = 0$ unless $\pi=0_n$ and $\abs{W} = n$, or equivalently, $Kr(\pi) =1_n$ and $W=[n]$.  
Consequently, $\widehat r_n ( \mfq ) \  (r_1(\mu))^{n}$ amounts to the only possibly nonzero term in the right-hand side of \eqref{eqn:rhats_multiplicative}. 
It then follows that 
\begin{equation*}
	\widehat{R}_{\mfp \boxtimes \mfq} (z) 
=  
		\sum_{n=1}^{\infty} \widehat{r}_{n}(\mfp \boxtimes \mfq) \, z^{n-1} 
= 
	r_1(\mu) \cdot \widehat{R}_{\mfq} (r_1(\mu) \, z ) \ .
\end{equation*}
Now, from Theorem \ref{prop:cauchy_multiplicative_finite_free}, we have that $\mfp \boxtimes \mfq = (p_d\boxtimes_d q_d)_{d\geq1}$ has infinitesimal distribution  $(\rho,\rho')$  where $\rho=\mu\boxtimes\delta_0$ and $\rho'$ has Cauchy transform $G_{\rho'}$ given by 
\begin{equation*} 
	G_{\rho'}  
=  
-  (\widehat{R}_{\mfp \boxtimes \mfq} \circ G_{\mu \boxtimes \delta_0} ) \cdot G'_{\mu \boxtimes \delta_0} 
-
\gdos_{\mu \boxtimes \delta_0} \, .
\end{equation*}
Moreover, from Example \ref{exm:lim.dist.finite.pertubation}, we also know that $G_{\mu \boxtimes \delta_0}=1/z$, $G'_{\mu \boxtimes \delta_0}=-1/{z^{2}}$, and $\gdos_{\mu \boxtimes \delta_0} = 0$ since $\mu \boxtimes \delta_0 = \delta_0$. 
Therefore, since $r_1(\mu) = m_1(\mu)$, we obtain that 
\begin{equation*}
	G_{\rho'}  
=  
	m_1(\mu) \cdot \widehat{R}_{\mfq} \left( \, \frac{\, m_1(\mu) \, }{z} \, \right) \cdot \frac{1}{z^2}
=
	m_1(\mu) \cdot 
	\sum_{k=1}^{t}
 \sum_{n=1}^{\infty} 
   \frac{(\beta_k \, m_1(\mu))^{n}  }{z^{n+1}} \, ,
\end{equation*}
or equivalently, $\rho' =  -t m_1(\mu) \delta_0 +  m_1(\mu) \sum_{k=1}^{t} \delta_{ m_1(\mu) \beta_k} $. 
\end{example}

The Dirac delta distribution $\delta_1$ is the identity for the free multiplicative convolution, and it is also the limiting root distribution of  $\mfp=(p_d)_{d\geq1}$ and $\mfq=(q_d)_{d\geq1}$ when they both have all but finitely many roots located at $1$. 
This is the analogue to Example \ref{exm:deviation_from_dirac2} in the multiplicative case. 
For this, we obtain not only that $\widehat{R}_{\mfp \boxtimes \mfq} = \widehat{R}_{\mfp}+ \widehat{R}_{\mfq}$, but also that the infinitesimal moments of the convolution equal the sum of the individual infinitesimal moments, just as in Example \ref{exm:deviation_from_dirac2}.

\begin{example}[Multiplication of finite rank perturbation of identity] 
\label{exm:two_perturbations_identity}

Assume that each $p_d$ and each $q_d$ from the sequences $\mfp=(p_d)_{d=s}^\infty$ and $\mfq=(q_d)_{d=t}^\infty$ are given by 
\begin{equation*}
	p_d(x)=(x-1)^{d-s}(x-\alpha_1)(x-\alpha_2) \cdots (x-\alpha_s) 
\qquad \text{and}\qquad 
	q_d(x)=(x-1)^{d-t}(x-\beta_1) \cdots (x-\beta_t)
\end{equation*}
for fixed integers $s,t \geq 1$ and fixed real numbers  $\alpha_1,\dots, \alpha_s, \beta_1,\dots,\beta_t\in \mathbb{R}$. 
From Example \ref{exm:lim.dist.finite.pertubation}, we know that $\mfp=(p_d)_{d=s}^\infty$ and $\mfq=(q_d)_{d=t}^\infty$ have infinitesimal distributions respectively given by
\begin{equation*}
\textstyle
(\mu,\mu') =
\left( \delta_1,-s \delta_1 + \sum_{k=1}^{s} \delta_{\alpha_k}\right)
\quad \text{and} \quad
(\nu,\nu') = 
\left(\delta_1,-t \delta_1 + \sum_{k=1}^{t} \delta_{\beta_k}\right).
\end{equation*}
%
%
This implies that $m_n(\mu)=m_n(\nu)=1$ for all integers $n \geq 1$; and hence, we obtain that  
$$r_n(\mu)=r_n(\nu)= \begin{cases} 1, & \text{if } n=1, \\ 0, & \text{if } n\geq 2.\end{cases}$$
Now, since $Kr:NC(n) \to NC(n)$ is a bijection, \eqref{eqn:rhats_multiplicative} from Theorem \ref{thm:infinitesimal_multiplicative_finite_free} can be rewritten as 
\begin{align}
\widehat r_n( \mfp \boxtimes \mfq )
    &= 
        \sum_{\pi\in NC(n)}
            \sum_{V\in\pi} 
		          \Big(
                        \widehat{r}_{|V|} (\mfp) 
                        r_{\pi \setminus V}(\mu)
                        \cdot r_{Kr(\pi)}(\nu) 
        +      
                        \widehat{r}_{|V|} (\mfq) 
                        r_{{Kr}({Kr}(\pi)) \setminus V}(\nu)
                        \cdot r_{{Kr}(\pi)}(\mu) \Big)
       \label{eqn:rhats_multiplicative2}   \\ 
    & \quad  -
            \frac{n}{2}
            \sum_{\substack{ t+s = n \\ \sigma \in S_{NC}(t,s) } } 
                    \frac{r_{\sigma}(\mu) \, r_{Kr_{t,s}(\sigma)}(\nu)}{ts} \nonumber
\end{align}
However, since every $ \sigma \in S_{NC}(t,s)$ has at least one block with two or more elements, which implies $r_{\sigma} (\mu) = 0$, no permutation from $ S_{NC}(t,s)$ contributes to right-hand side of \eqref{eqn:rhats_multiplicative2}.
Additionally, given a partition $\pi \in NC(n)$, if its Kreweras complement $Kr(\pi)$ contains at least one block with two or more elements, then $r_{Kr(\pi)} (\mu) = r_{Kr(\pi)} (\nu) = 0$. 
Thus, a partition $\pi \in NC(n)$ contributes to right-hand side of  \eqref{eqn:rhats_multiplicative2} only if all the blocks of $Kr(\pi)$ are singletons, which is equivalent to $\pi = 1_n$. 
Therefore, we obtain that 
\begin{equation*}
    \widehat{r}_n( \mfp \boxtimes \mfq ) 
    = 
    \widehat{r}_n( \mfp ) + \widehat{r}_n(\mfq ) , 
\text{\qquad or equivalently, \qquad}
\widehat{R}_{\mfp \boxtimes \mfq} = \widehat{R}_{\mfp}+ \widehat{R}_{\mfq} \, .
\end{equation*}
It follows from Theorem \ref{prop:cauchy_multiplicative_finite_free} that the sequence $ \mfp \boxtimes \mfq = (p_d\boxtimes_d q_d)_{d\geq1}$ has infinitesimal distribution $(\rho,\rho')$ where $\rho=\delta_1 \boxtimes \delta_1 = \delta_1$ and $\rho'$ has Cauchy transform $G_{\rho'}$ given by 
\begin{equation*}
G_{\rho'} =  
-  (\widehat{R}^{}_{\mfp} \circ G^{}_{\delta_1} ) \cdot G'_{\delta_1} 
-  (\widehat{R}^{}_{\mfq} \circ G^{}_{\delta_1} ) \cdot G'_{\delta_1}
+ \gdos^{}_{\delta_1} . 
\end{equation*}
But, from Example \ref{exm:lim.dist.finite.pertubation}, we know that $-  (\widehat{R}^{}_{\mfp} \circ G^{}_{\delta_1} ) \cdot G'_{\delta_1} $ and $
-  (\widehat{R}^{}_{\mfq} \circ^{} G_{\delta_1} ) \cdot G'_{\delta_1}$ are the Cauchy transforms of $-s \delta_1 + \sum_{k=1}^{s} \delta_{\alpha_k}$ and $-t \delta_1 + \sum_{k=1}^{t} \delta_{\beta_k}$, respectively, and that $H^{}_{\delta_1}=0$. 
Consequently, we obtain that 
\begin{equation*}
G_{\rho'}(z) =
     - \frac{s+t}{z-1} + \sum_{k=1}^{s} \frac{1}{z-\alpha_k} + \sum_{k=1}^{t} \frac{1}{z-\beta_k}  ,
\end{equation*}
or equivalently, $\rho' = - \, (s+t) \, \delta_1  + \sum_{k=1}^{s} \delta_{\alpha_k} + \sum_{k=1}^{t} \delta_{\beta_k} $. 
\end{example}

The previous example relied on Theorem \ref{thm:infinitesimal_multiplicative_finite_free}. 
A more general case, namely, when we only assume that $\mfq=(q_d)_{d\geq1}$ has all but finitely many roots located at $1$, can be addressed by considering Theorem \ref{prop:moments_for_multiplicative} instead.  
As in the additive case, see Example \ref{exm:deviation_from_dirac}, we show that our combinatorial results also retrieve, as particular cases, the distributions examined by Shlyakhtenko in \cite[Section 4.2]{shlyakhtenko2018free}, corresponding this time to the infinitesimal free multiplicative convolution. 
An alternative approach to obtain the following is Corollary \ref{Cor. infmult}.

\begin{example}[Multiplication by a finite rank perturbation]
\label{exm:deviation_from_dirac3}
Let $\mfq=(q_d)_{d=s}^\infty$ be as in Example \ref{exm:two_perturbations_identity}. We know that  $\mfq=(q_d)_{d=s}^\infty$  has infinitesimal distribution  $(\nu,\nu')=\left( \delta_1,-t \delta_1 + \sum_{k=1}^{t} \delta_{\beta_k}\right)$. 
Since $Kr:NC(n) \to NC(n)$ is a bijection, \eqref{eqn:mprime_multiplicative} from Theorem \ref{prop:moments_for_multiplicative} can be rewritten as
\begin{align*}
m'_n(\mfp \boxtimes \mfq)
& = 
		\sum_{\pi\in NC(n)} 
		\sum_{V\in\pi}
			\Big(
					 m'_{|V|}(p) \, m_{\pi \backslash V}(\mu) \cdot r_{Kr(\pi)}(\nu)
				+
					\widehat{r}_{|V|}(q) \, r_{ \pi \backslash V}(\nu) \cdot m_{{Kr}^{-1}(\pi)}(\mu)
			\Big)  \\
&\quad 
-\frac{n}{2} \sum_{\substack{ t+s = n \\ \sigma \in S_{NC}(t,s) } } 
\frac{m_{\sigma}(\mu) \, r_{Kr_{t,s}(\sigma)}(\nu)}{ts}
\end{align*}
As in Example \ref{exm:two_perturbations_identity}, since $r_n(\nu)=0$ for $n \geq 2$, we have that $r_{Kr(\pi)}(\nu)=0$ unless $\pi = 1_n$ and that $r_{Kr_{t,s}(\sigma)}=0$ for every permutation $ \sigma \in S_{NC}(t,s)$; 
additionally, $r_{\pi \setminus V}(\nu)=0$ unless $\pi\setminus V$ contains only singletons as blocks. 
Consequently, the previous equation yields 
\begin{equation*}
m'_n( \mfp \boxtimes \mfq)
   \ \ \ =    \ \ \
        m'_n(\mfp)
       \ \ \ +
        \sum_{ \emptyset \neq S \subseteq [n] }
        	\widehat{r}_{\abs{S}}(\mfq)  \, 
             r_{0_{n\setminus S}}(\nu)
			\cdot 
            m_{{Kr}^{-1}(S \cup 0_{n\setminus S})}(\mu)
\end{equation*}
where we have used the fact that $S \mapsto (0_{n\setminus S} \cup S,S)$ is a bijection from nonempty sets $S \subseteq [n]$ to pairs $(\pi,V)$ where $V \in \pi \in NC(n)$ and $ \pi\setminus V$ contains only singletons. 
The map  $S \mapsto {Kr}^{-1}(S \cup 0_{n\setminus S})$ is a bijection from nonempty sets $S \subseteq [n]$ to the set of cyclic interval partitions $CI(n)$ such that $\abs{S}= \blocks{\pi}$ provided $\pi = {Kr}^{-1}(S \cup 0_{n\setminus S})$. 
Hence, we obtain that 
\begin{equation}\label{eqn:moments_finiterank_pert_2}
m'_n( \mfp \boxtimes \mfq)
   \ \ \  =    \ \ \
        m'_n(\mfp)
       \ \ \ 
+
        \sum_{\pi \in CI(n)}
        \widehat{r}_{\blocks{\pi}}(\mfq) \cdot m_{\pi} (\mu) 
       \ \ \ =    \ \ \
        m'_n(\mfp)
       \ \ \ 
+		
		\sum_{k=1}^{t}
        \sum_{\pi \in CI(n)}
        (\beta_k-1)^{\blocks{\pi}} \cdot m_{\pi} (\mu) \, .
\end{equation}
where the last equality follows from the fact that $\widehat{r}_{n}(\mfq) =	\sum_{k=1}^{t} (\beta_k-1)^{n}$, see Example \ref{exm:lim.dist.finite.pertubation}. 

We will now compute the Cauchy transform associated to each of the rightmost terms in \eqref{eqn:moments_finiterank_pert_2}. 
To this end, for a fixed $k \in [t]$, take  
\begin{equation*}
\tilde{m}_n = -\, (\beta_k-1) \, m_n(\mu)
\text{\qquad and \qquad}
\overline{m}_n = 
		\sum_{\pi \in CI(n)}
		 - \, (\beta_k-1)^{\blocks{\pi}} \cdot m_{\pi} (\mu)
\text{\qquad for every \qquad} n \geq 1
\end{equation*}
and consider 
\begin{equation*}
\widetilde{G}(z) = \frac{1}{z} + \sum_{n=1}^{\infty} \frac{\tilde{m}_n}{z^{n+1}} = (1-\beta_k)G_{\mu} (z) + \frac{\beta_k}{z}
\text{\qquad and \qquad}
\overline{G}(z) = \frac{1}{z} + \sum_{n=1}^{\infty} \frac{\overline{m}_n}{z^{n+1}}
	=
		\frac{1}{z}  + \overline{g}(z). 
\end{equation*}
Note that the coefficients $m_n(\mu)$, $\tilde{m}_n$, and $\overline{m}_n$ satisfy the relation   
\begin{equation*}
\sum_{\pi \in CI(n)} (-1)^{\blocks{\pi}+1} \tilde{m}_{\pi} 
	= 
  \sum_{\pi \in CI(n)} -(\beta_k-1)^{\blocks{\pi}} m_{\pi}(\mu)
=
\overline{m}_n \, .
\end{equation*}
This means that $\widetilde{G}(z)$ is in fact the Markov-Krein transform of $\overline{G}(z)$, so they are related through the equation  
\begin{equation*}
-\overline{G}(z) = \frac{\widetilde{G}'}{\widetilde{G}} = -\frac{1}{z} - \overline{g}(z) 
\, .
\end{equation*}
Thus, we obtain that 
\begin{equation*}
-\overline{g}(z) 
 = 
\frac{\widetilde{G}'}{\widetilde{G}} + \frac{1}{z} 
= 
\frac{d}{dz} \log( \, z \widetilde{G} \, )
		- \sum_{n=1}^{\infty} \frac{\overline{m}_n}{z^{n+1}} 
=
    \frac{d}{dz} \log( \, (1-\beta_k)zG_{\mu}(z) + \beta_k \, )
\end{equation*}
Therefore, the sequence $ \mfp \boxtimes \mfq = (p_d\boxtimes_d q_d)_{d\geq1}$ has infinitesimal distribution $(\rho,\rho')$ where $\rho=\mu \boxtimes \delta_1 = \mu$ and $\rho'$ has Cauchy transform $G_{\rho'}$ given by 
\begin{equation*}
G_{\rho'} 
=  
G_{\mu'} + \sum_{k=1}^{t} 	\frac{d}{dz}  \log \left( z(1-\beta_k)G_{\mu} + \beta_k \right) .
\end{equation*}
In particular, if $\beta_k=0$ for every $k \in [t]$, the previous formula reduces to 
\begin{equation*}
G_{\rho'} =  G_{\mu'} + t \frac{d}{dz} \log(zG_{\mu}(z))
=  G_{\mu'} + t \left( \frac{G'_{\mu}}{G_{\mu}} + \frac{1}{z} \right).
\end{equation*}
\end{example}

Our next example is motivated by the work \cite{fujiehasebe2022} where Fujie and Hasebe calculate the fluctuations of the eigenvalue distribution of the principal minor of  unitarily invariant matrices with limiting eigenvalue distribution. 

\begin{example} \label{ex.derivatives} 
Assume that $\mu \neq  \delta_0$. 
Fix an integer $s \geq 0$ and let $p_d^{(s)}$ denote the $s$-th  derivative of $p_d$ for $s \leq d$. 
We are interested in analyzing   
\begin{equation*}
\tau_d : = d\mu-(d-s)\meas{p_d^{(s)}}=d\mu- \sum_{k=1}^{d-s} \delta_{\lambda_k(p_d^{(s)})}
\quad 
\text{as}
\quad
d \to \infty. 
\end{equation*}
From \cite[Lemma 3.5]{arizmendi2023finite}, we know that $p_d^{(s)}$ can be computed via finite-free multiplicative convolution. 
Concretely, letting $q_d(x) =x^s (x-1)^{d-s}  $ for $s \leq d$, we have that 
\begin{equation*}
p_d\boxtimes_d q_d =\frac{1}{(d)_s}x^sp_d^{(s)}(x) \, .
\end{equation*}
Note that $\mfq=(q_d)_{d=s}^\infty$ has infinitesimal distribution  $(\nu,\nu')=\left( \delta_1, s \delta_{0} -s \delta_1 \right)$ in this case. 
Thus, from Proposition \ref{prop:finite_free_infinitesimal_coincide_mult} and Corollary \ref{Cor. infmult}, we obtain that $\mfp \boxtimes\mfq = (p_d\boxtimes_d q_d)_{d\geq1}$ has infinitesimal distribution $(\gamma,\gamma')$ where $\gamma = \mu$ and 
\begin{equation*}
 G_{\gamma'} 
    =
        G_{\mu'}
    +
        G_{\nu'} (\theta_{\mu}) \cdot \theta'_{\mu}
\quad \text{with}\quad 
\theta_{\mu}(z) =\frac{G_{\mu}(z)}{ G_{\mu}(z) - 1/z}
\quad \text{and}\quad 
\theta'_{\mu} (z) = -\frac{z G_\mu'(z) + G_\mu(z)}{(z G_\mu(z) - 1)^2} \, .
\end{equation*}
Moreover, since $G_{\nu'}(z)=\frac{k}{z}-\frac{k}{z-1}$, we also obtain that 
\begin{equation*}
 G_{\nu'}(\theta_{\mu}(z))=-\frac{s \, (z G_\mu(z) - 1)^2}{z G_\mu(z)} \, .
\end{equation*}
Therefore, we get that
\begin{equation*}
 G_{\gamma'} 
    =
        G_{\mu'}(z)+s\frac{z G_\mu'(z) + G_\mu(z)}{z G_\mu(z)}
=
		G_{\mu'}(z)+\frac{s G_\mu'(z)}{G_\mu(z)} + \frac{s}{z}.
\end{equation*}
Finally, to obtain the limit of $\tau_d$ as $d \to \infty$, that we denote by $\tau$, we note that
\begin{equation*}
dG_{\meas{p_d\boxtimes_d q_d}}-(d-s) G_{\meas{p_d^{(s)}}}=\frac{s}{z} 
\end{equation*}
which implies 
\begin{equation*}
G_{\tau}=G_{\gamma'} -\frac{d}{z}=G_{\mu'}(z)+\frac{sG_\mu'(z)}{G_\mu(z)}.
\end{equation*}%
\end{example}

Our last example relates to the inverse of Laguerre polynomials with respect to the finite-free multiplicative convolution. 

\begin{example}[Multiplicative inverse of Laguerre polynomials]
\label{exm:inverse_laguerre}
In this example, we will look into their inverses with respect to the multiplicative convolution. Namely, we will analyze the polynomials 
\begin{equation*}
p_d(x)
= 
\sum_{i=0}^d (-1)^i x^{d-i} \binom{d}{i} \frac{d^i}{(d)_{i}}
, 
\end{equation*}
which can be equivalently defined through the relation $p_d \boxtimes_d  L^{(1)}_d = (x-1)^d $. 
From \cite[Lemma 3.2]{arizmendi2023finite}, we know that the moments of $p_d$ coincide with the cumulants of $p_d \boxtimes_d  L^{(1)}_d$. Thus, we obtain that 
\begin{equation*}
m_n(p_d) = \kappa^d_n( (x-1)^d ) 
=
	\begin{cases}
			1,	& \text{if }n=1, \\
			0,	& \text{otherwise}.
			\end{cases}  
\end{equation*}
Although, the sequence $(m_n(p_d))_{n=1}$ is independent from $d$, it does not correspond to the moments of a measure supported on $\rr$. 
Indeed, there is no $\mu \in \mathcal{P}(\rr)$  such that $m_1(\mu)=1$ and $m_2(\mu)=0$. 
Nonetheless, one can still consider the ``Cauchy transform'' as the formal power series that corresponds to this sequence: 
\begin{equation*}
G_{\mfp}(z)
:=
		\frac{1}{z} + \sum_{n=1}^{\infty} \frac{m_n(p_d)}{z^{n+1}}
= \frac{1}{z}+\frac{1}{z^2}
= 
		\frac{z+1}{z^2} \, .
\end{equation*}
The first and second derivatives of $G_{\mfp}(z)$ are then given by 
\begin{equation*}
G_{\mfp}'(z)=-\frac{1}{z^2}-\frac{2}{z^3}= \frac{-z-2}{z^3}
\qquad \text{and} \qquad 
G_{\mfp}''(z)=\frac{2}{z^3}+\frac{6}{z^4}= 2 \frac{z+3}{z^4} \, .
\end{equation*}
And $\gdos$-transform takes the particular nice expression 
\begin{equation*}
\gdos_{\mfp}(z) 
	= 
	\frac{G_{\mfp}'(z)}{G_{\mfp}(z)} -\frac{G_{\mfp}''(z)}{2G_{\mfp}'(z)}
=
		-\frac{z+2}{z(z+1)}+\frac{z+3}{z(z+2)}
=	
	\frac{-1}{z(z+1)(z+2)}
\end{equation*}
Now, since the Cauchy transform $G_{\mfp}(z)$ satisfies $z^2G_{\mfp}(z)-z-1 =0$, its inverse $K_{\mfp}(z) $ under composition is given by
\begin{equation*}
K_{\mfp}(z)=\frac{1+\sqrt{ 1+4 z}}{2z} 
\text{\quad with \quad } 
K_{\mfp}'(z)
=
		\frac{1}{z\sqrt{ 1+4 z}}-\frac{1+\sqrt{ 1+4 z}}{2z^2}
=	
		-\frac{1 + 2 z + \sqrt{1 + 4 z}}{2 z^2 \sqrt{1 + 4 z}} \, .
\end{equation*}
Thus, from Example \ref{exm:lim.dist.fixed.moments}, we obtain that 
\begin{equation*}
\widehat{R}_\mfp(z) 
	= 	
		-\gdos_{\mfp}(K_{\mfp}(z) ) \cdot K_{\mfp}'(z)
	=
		\frac{-4z}{(1+ \sqrt{1 + 4 z})(1 + 4z + \sqrt{1 + 4 z})\sqrt{1 + 4 z}}
	=
		\frac{-4 z}{(1 + 4z + \sqrt{1 + 4 z})^2} \, 
\end{equation*}
with Taylor series expansion 
\begin{equation*}
\widehat{R}_\mfp(z)
= 
		\sum_{n=1}^{\infty} \hat{r}_n(\mfp) \, z^{n-1}
=
		0-1z + 6 z^2 - 29 z^3 + 130 z^4 - 562 z^5 + 2380 z^6+ \dots 
	 \ . 
\end{equation*}
where $\hat{r}_n(\mfp)=(4^{n} - \binom{2n+1}{n})(-1)^n$. 
Let us point out that these cumulant fluctuations coincide up to a sign with the sequence A008549 in Sloane's OEIS. 
They are also related to the enumeration of certain annular partitions arising in the infinitesimal distribution of Real Wishart Random Matrices, see \cite[Corollary 36]{mingo2025asymptotic}.
\end{example}

\section*{Acknowledgements}

Some of the discussions of this project take place at the evet SIMA 2024, held at Merida, Mexico. Support for this research was provided by an AMS-Simons Travel Grant, to support the trip of D.P. to SIMA, and a visit of O.A. to Texas A\&M University. 
O.A. gratefully acknowledges financial support by the grant Conacyt A1-S-9764. 
D.P. appreciates the hospitality of CIMAT during November 2023.


\end{document}